\newlist{enum-hypothesis}{enumerate}{1}
\newtheorem{theorem}{Theorem}[section]
\newtheorem{proposition}[theorem]{Proposition}
\newtheorem{lemma}[theorem]{Lemma}
\newtheorem{corollary}[theorem]{Corollary}
\theoremstyle{plain}
\theoremstyle{plain}
\newtheorem{remark}[theorem]{Remark}
\theoremstyle{break}
\theoremstyle{nonumberplain}
\newtheorem{proof}{Proof}
\numberwithin{equation}{section}
\newcommand\R{{\mathbb R}}
\newcommand\C{{\mathbb C}}
\newcommand\N{{\mathbb N}}
\newcommand\Z{{\mathbb Z}}
\newcommand{\TT}{\mathbb{T}}
\newcommand{\gS}{\mathbb{S}}
\newcommand{\BB}{\mathbb{B}}
\renewcommand{\H}{\mathcal{H}}
\newcommand{\cS}{\mathcal{S}}
\newcommand{\vc}{\vcentcolon =}   
\newcommand{\cv}{=\vcentcolon }   
\newcommand{\bbbone}{{\text{\usefont{U}{dsss}{m}{n}\char49}}}
\newcommand{\pmu}{\partial_\mu}  
\newcommand{\pnu}{\partial_\nu}   
\newcommand{\prho}{\partial_\rho}  
\newcommand{\nmu}{\nabla_\mu}   
\newcommand{\nnu}{\nabla_\nu}   
\newcommand{\nrho}{\nabla_\rho}   
\newcommand{\mul}{{\bf m}}    
\newcommand{\dmu}{\delta_\mu}
\newcommand{\dnu}{\delta_\nu}
\newcommand{\btau}{\bar{\tau}}
\newcommand{\bepsilon}{\bar{\epsilon}}
\newcommand{\bpartial}{\bar{\partial}}
\newcommand{\hnabla}{\widehat{\nabla}}
\newcommand{\hnmu}{\hnabla_\mu}
\newcommand{\hnnu}{\hnabla_\nu}
\newcommand{\hdelta}{\widehat{\delta}}
\newcommand{\hdmu}{\hdelta_\mu}
\newcommand{\hdnu}{\hdelta_\nu}
\newcommand{\NCtr}{\mathfrak{t}}
\DeclareMathOperator{\Tr}{Tr}	   
\DeclareMathOperator{\tr}{tr}	   
\DeclareMathOperator{\End}{End}	 
\DeclareMathOperator{\spec}{sp}   
\newcommand{\dd}{\text{d}}
\newcommand{\dvolg}{\text{dvol}_g}
\newcommand{\dvolh}{\text{dvol}_h}
\newcommand{\Modular}{{\boldsymbol{\Delta}}}
\DeclarePairedDelimiter\abs{\lvert}{\rvert}
\DeclarePairedDelimiter\norm{\lVert}{\rVert}
\newcounter{mnotecount}[section]
\renewcommand{\themnotecount}{\thesection.\arabic{mnotecount}}
\newcommand{\mnote}[1]
{\protect{\stepcounter{mnotecount}}$^{\mbox{\footnotesize
$
\bullet$\themnotecount}}$ \marginpar{
\raggedright\tiny\em
$\!\!\!\!\!\!\,\bullet$\themnotecount: #1} }
\newcommand{\hlterm}[1]{\boldsymbol{#1}}
\renewcommand\hlterm[1]{#1}
\begin{document}

\title{Heat asymptotics for nonminimal Laplace type operators\\ and application to noncommutative tori}
\author[CPT]{B. Iochum}
\ead{bruno.iochum@cpt.univ-mrs.fr}
\author[CPT]{T. Masson}
\ead{thierry.masson@cpt.univ-mrs.fr}

\address[CPT]{Centre de Physique Théorique\\
Aix Marseille Univ, Université de Toulon, CNRS, CPT, Marseille, France}

\begin{abstract}
Let $P$ be a Laplace type operator acting on a smooth hermitean vector bundle $V$ of fiber $\C^N$ over a compact Riemannian manifold given locally by  $P= - [g^{\mu\nu} u(x)\partial_\mu\partial_\nu + v^\nu(x)\partial_\nu + w(x)]$ where $u,\,v^\nu,\,w$ are $M_N(\C)$-valued functions with $u(x)$ positive and invertible. For any $a \in \Gamma(\text{End}(V))$, we consider the asymptotics $\Tr (a e^{-tP}) \underset{t \downarrow 0^+}{\sim} \,\sum_{r=0}^\infty a_r(a, P)\,t^{(r-d)/2}$ where the coefficients $a_r(a, P)$ can be written locally as $a_r(a, P)(x) = \tr[a(x) \mathcal{R}_r(x)]$.\\
The computation of $\mathcal{R}_2$ is performed opening the opportunity to calculate the modular scalar curvature for  noncommutative tori.
\end{abstract}

\begin{keyword}
Heat kernel\sep nonminimal operator\sep asymptotic heat trace\sep Laplace type operator, scalar curvature, noncommutative torus

\PACS 11.15.-q \sep 04.62.+v

\MSC 58J35 \sep 35J47 \sep 81T13 \sep 46L87
\end{keyword}

\maketitle

\section{Introduction}

As in \cite{IochMass17a}, we consider a $d$-dimensional compact Riemannian manifold $(M, g)$ without boundary, together with a nonminimal Laplace type operator $P$ on a smooth hermitean vector bundle $V$ over $M$ of fiber $\C^N$ written locally as
\begin{align}
\label{eq-def-P-uvw}
P \vc- [\,g^{\mu\nu} u(x)\pmu\pnu + v^\nu(x)\pnu + w(x)\,].
\end{align}
Here $u(x) \in M_N(\C)$ is a positive and invertible matrix valued function and $v^\nu,\, w$ are $M_N(\C)$ matrices valued functions. The operator is expressed in a local trivialization of $V$ over an open subset of $M$ which is also a chart on $M$ with coordinates $(x^\mu)$. This trivialization is such that the adjoint for the hermitean metric corresponds to the adjoint of matrices and the trace on endomorphisms on $V$ becomes the usual trace $\tr$ on matrices. 

For any $a \in \Gamma(\End(V))$, we consider the asymptotics of the heat-trace
\begin{align}
\label{heat-trace-asympt}
\Tr (a e^{-tP}) \underset{t \downarrow 0^+}{\sim} \,\sum_{r=0}^\infty a_r(a, P)\,t^{(r-d)/2}.
\end{align}
where $\Tr$ is the operator trace. Each coefficient $a_r(a, P)$ can be written as
\begin{equation}
\label{eq-araP-dvol}
a_r(a, P) = \int_M a_r(a, P)(x) \, \dvolg(x)
\end{equation} 
where $\dvolg(x) \vc \abs{g}^{1/2} \dd x$ with $\abs{g} \vc \det(g_{\mu\nu})$. The functions $a_r(a, P)(x)$ can be evaluated (various techniques exist for that) and give expressions of the form
\begin{equation*}
a_r(a, P)(x) = \tr[a(x) \mathcal{R}_r(x)],
\end{equation*}
where $\tr$ is the trace on matrices and $\mathcal{R}_r$ is a (local) section of $\End(V)$. The local section $\mathcal{R}_r$ of $\End(V)$ is uniquely defined by
\begin{equation}
\label{eq-Rr-trace-choice}
a_r(a, P) = \varphi(a \mathcal{R}_r),
\end{equation}
where $\varphi(a) \vc \int_M  \tr[a(x)] \, \dvolg(x)$ is the natural combined trace on the algebra of sections of $\End(V)$ associated to $(M,g)$ (the integral) and $V$ (the matrix trace). The choice of this trace is not unique, and changing $\varphi$ changes $\mathcal{R}_r$. For instance, since $M$ is compact, one can normalize the integral so that the total volume of $M$ is $1$, and also the matrix trace such that the trace of the identity matrix is $1$. In that case, denoted by $\bbbone$ the identity operator in $\Gamma(\End(V))$, the new combined trace $\varphi_0$ satisfies $\varphi_0(\bbbone) = 1$. In Section~\ref{sec-applications-NCT} $\varphi_0$ plays an important role since it corresponds to the unique normalized trace on the noncommutative torus algebra. Another possibility for the choice of $\varphi$ is to use a Riemannian metric on $M$ which is not the tensor $g$ in $P$, see Remark~\ref{rem-metrics}.

The aim of this paper is to present a way to compute $\mathcal{R}_r$ by adapting the techniques developed in \cite{IochMass17a}. These techniques were strongly motivated by a need in physics for explicit computations of $a_r(\bbbone,P)$, see for instance \cite{Avra04a, AvraBran01a} and the reference in \cite{IochMass17a} for the existing results on the mathematical side. The idea behind the computation of $\mathcal{R}_2$ is to extract the real matrix content of the coefficient $a_2$ which is related to the scalar curvature of the manifold $M$.

In Section~\ref{sec-method-results}, two formulas are provided for $\mathcal{R}_2(x)$, both in local coordinates (Theorem \ref{thm-R-uvw}) and in a covariant way (Theorem \ref{thm-R-upq}) in arbitrary dimension and detailed in low dimensions. In Section~\ref{sec-direct-applications}, some direct applications are also provided, for instance to a conformal like transformed Laplacian. Section~\ref{sec-details-computations} is devoted to the details of the computations (see also the ancillary \texttt{Mathematica}  \cite{Wolf17a} notebook file \cite{IochMass17b}).

In Section~\ref{sec-applications-NCT}, another applications are given in noncommutative geometry. Namely, we compute the conformally deformed scalar curvature of  rational noncommutative tori (NCT). Since at rational values $\theta = p/q$ of the deformation parameter, the algebras of the NCT are isomorphic to the continuous sections of a bundle over the ordinary tori with fiber in $M_q(\C)$, they fit perfectly with our previous framework. The irrational case has been widely studied in \cite{ConnTret11a, ConnMosc14a, FathKhal11a, FathKhal12a, DabSit13, FathKhal13a, ALNP, Sitarz14, Fath15a, DabSit15, Liu15a, Sade16a, ConnesFath16}. The results presented in these papers can be written without explicit reference to the parameter $\theta$. In the rational case, our results confirm this property. Moreover, our method gives an alternative which avoids the theory of pseudodifferential calculus on the noncommutative tori introduced by Connes and Tretkoff \cite{Connes80,ConnTret11a}, see also \cite{LeschMosco16}. In Appendix~\ref{sec-comparison-NCT}, in order to confirm the results in \cite[Theorem~5.2]{FathKhal11a} and \cite[Theorem~5.4]{FathKhal13a}, we perform the change of variables from $u$ to $\ln(u)$ and the change of operators from the left multiplication by $u$ to the conjugation by $u$, formalized as a rearrangement lemma (Lemma~\ref{lem-rearrangement-lemma}).

\section{The method and the results}
\label{sec-method-results}

In \cite{IochMass17a}, the computation was done for the special case $a = \bbbone$, for which a lot of simplifications can be used under the trace. We now show that the method described there can be adapted almost without any change to compute the quantities $\mathcal{R}_r$. Moreover, we present the method in a way that reduces the number of steps in the computations, using from the beginning covariant derivatives on the vector bundle $V$.

\subsection{Notations and preliminary results}

In order to start with the covariant form of $P$ (see \cite[Section~A.4]{IochMass17a}), let us introduce the following notations. We consider a covariant derivative $\nmu \vc \pmu + \eta(A_\mu)$, where $\eta$ is the representation of the Lie algebra of the gauge group of $V$ on any associated vector bundles (mainly $V$ and $\End(V)$ in the following). Let 
\begin{align*}
\alpha_\mu &\vc g_{\rho\sigma}(\pmu g^{\rho\sigma}),
&
\alpha^\mu &\vc g^{\mu\nu} \alpha_\nu = g^{\mu\nu} g_{\rho\sigma}(\pnu g^{\rho\sigma}),
&
\beta_\mu &\vc g_{\mu\sigma}(\prho g^{\rho\sigma}),
&
\beta^\mu &\vc g^{\mu\nu} \beta_\nu = \pnu g^{\mu\nu}.
\end{align*}
The covariant form of $P$ associated to $\nabla$ (see \cite[eq.~(A.11)]{IochMass17a}) is given by
\begin{align}
P &= -(\abs{g}^{-1/2} \nmu \abs{g}^{1/2} g^{\mu\nu} u \nnu + p^\mu \nmu +q)
\nonumber
\\
&= - g^{\mu\nu} u \nmu \nnu - \big( p^\nu + g^{\mu\nu} (\nmu u) - [\tfrac{1}{2} \alpha^\nu - \beta^\nu ] u \big) \nnu - q,
\label{eq-def-P-upq}
\end{align}
where the last equality is obtained using $g^{\mu\nu} \tfrac{1}{2} \pmu \ln \abs{g} + \pmu g^{\mu\nu} = - \big[\tfrac{1}{2} \alpha^\nu - \beta^\nu \big]$. Here, $u$ is as before, and $p^\mu, \, q$ are as $v^\mu,\, w$ from \eqref{eq-def-P-uvw}, except that they transform homogeneously in a change of trivialization of $V$. All these (local) functions are $M_N(\C)$-valued (as local sections of $\End(V)$), so that $\eta$ is the adjoint representation:
\begin{align*}
\nmu u = \pmu u + [A_\mu, u],
\qquad
\nmu p^\nu = \pmu u + [A_\mu, p^\nu],
\qquad
\nmu q = \pmu u + [A_\mu, q].
\end{align*}

Let us introduce the total covariant derivative $\hnabla_\mu$, which combines $\nmu$ with the Levi-Civita covariant derivative induced by the metric $g$. It satisfies
\begin{align*}
\hnabla_\mu a^\nu 
&= \nmu a^\nu + \Gamma_{\mu \rho}^\nu a^\rho
= \pmu a^\nu + [A_\mu, a^\nu] + \Gamma_{\mu \rho}^\nu a^\rho,
&
\hnabla_\mu g^{\alpha \beta} &= 0,
\\
\hnabla_\mu b_\nu 
&=  \nmu b_\nu - \Gamma_{\mu\nu}^\rho b_\rho
= \pmu b_\nu + [A_\mu, b_\nu] - \Gamma_{\mu\nu}^\rho b_\rho,
&
\hnabla_\mu g_{\alpha \beta} &= 0,
\end{align*}
for any $\End(V)$-valued tensors $a^\nu$ and $b_\nu$, where $\Gamma_{\mu \rho}^\nu$ are the Christoffel symbols of $g$. Let us store the following relations:
\begin{align}
\hnabla_\mu u & = \nmu u,
\label{eq-nablaLCu}
\\
g^{\mu\nu} \hnabla_\mu \hnabla_\nu u 
&=
g^{\mu\nu} ( \nmu \nnu u - \Gamma^\rho_{\mu\nu} \nrho u )
=
g^{\mu\nu} \nmu \nnu u - [ \tfrac{1}{2} \alpha^\mu - \beta^\mu] \nmu u,
\label{eq-laplacianLCu}
\\
\hnabla_\mu p^\mu
&= \nmu p^\mu - \tfrac{1}{2} g_{\alpha\beta} (\pmu g^{\alpha\beta}) p^\mu
= \nmu p^\mu - \tfrac{1}{2} \alpha_\mu p^\mu.
\nonumber
\end{align}
Using $\tfrac{1}{2} \alpha^\rho - \beta^\rho = g^{\mu\nu} \Gamma_{\mu\nu}^\rho$, one then has
\begin{align}
\label{eq-P-upq-hatnabla}
P &= - ( g^{\mu\nu}  \hnmu u \hnnu  + p^\nu \hnnu +q)
= - g^{\mu\nu} u \hnmu \hnnu - [ p^\nu + g^{\mu\nu} (\hnmu u) ] \hnnu - q.
\end{align}
Notice that in these expressions the total covariant derivative $\hnabla_\nu$ (which is the first to act) will never apply to a tensor valued section of $V$, so that it could be reduced to the covariant derivative $\nabla_\nu$.

The writing of $P$ in terms of a covariant derivative $\nabla$ is of course not unique:

\begin{proposition}
\label{prop-change-connection-pq}
Let $\nmu' = \nmu + \eta(\phi_\mu)$ be another covariant derivative on $V$. Then
\begin{align}
\label{eq-P-up'q'-hatnabla}
P &= - ( g^{\mu\nu}  \hnmu' u \hnnu'  + p^{\prime\nu} \,\hnnu' +q'),
\end{align}
with
\begin{align}
p^{\prime\nu}
= p^{\nu} - g^{\mu\nu} ( u \phi_\mu + \phi_\mu u),\qquad
q'= q - g^{\mu\nu} (\hnmu u \phi_\nu) + g^{\mu\nu} u \phi_\mu \phi_\nu - p^\mu \phi_\mu.
\label{eq-p'q'-pqphi}
\end{align}
\end{proposition}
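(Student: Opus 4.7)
The plan is to start from the covariant expression in \eqref{eq-P-upq-hatnabla},
\[
P = -(g^{\mu\nu}\hnmu u \hnnu + p^\nu \hnnu + q),
\]
and substitute the basic relation $\hnmu = \hnmu' - \eta(\phi_\mu)$. Acting on a section $\psi$ of $V$ this amounts to $\hnmu\psi = \hnmu'\psi - \phi_\mu\psi$ (left multiplication), while on sections of $\End(V)$ it becomes $\hnmu a = \hnmu' a - [\phi_\mu,a]$. The only thing to prove is that, after expansion, the resulting operator can be rearranged into the form \eqref{eq-P-up'q'-hatnabla} with the displayed $p'^\nu$ and $q'$.

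The concrete steps would be: first replace $\hnnu\psi$ by $\hnnu'\psi - \phi_\nu\psi$, then multiply by $u$, and then replace the outer $\hnmu$ by $\hnmu' - \phi_\mu(\cdot)$. This produces four groups of terms, which I would collect by order in $\hnabla'$. The second-order contribution is simply $-g^{\mu\nu}\hnmu'(u\hnnu'\psi)$, since only the second-order symbol $g^{\mu\nu}u$ survives. The first-order contribution, combined with $p^\nu \hnnu\psi = p^\nu \hnnu'\psi - p^\nu\phi_\nu\psi$, gives exactly $p'^\nu \hnnu'\psi$ with $p'^\nu = p^\nu - g^{\mu\nu}(u\phi_\mu + \phi_\mu u)$ after using the symmetry of $g^{\mu\nu}$ to symmetrize $\phi_\mu u$ against $u\phi_\nu$. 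The remaining zeroth-order contribution is
\[
q\psi - g^{\mu\nu}\hnmu'(u\phi_\nu)\psi + g^{\mu\nu}\phi_\mu u\phi_\nu\psi - p^\nu\phi_\nu\psi.
\]

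To match the stated expression, I would then replace $\hnmu'(u\phi_\nu)$ by $\hnmu(u\phi_\nu)+[\phi_\mu,u\phi_\nu]$, expand the commutator, and again use the symmetry of $g^{\mu\nu}$ (so that $g^{\mu\nu}u\phi_\nu\phi_\mu = g^{\mu\nu}u\phi_\mu\phi_\nu$) to cancel the spurious $-g^{\mu\nu}\phi_\mu u\phi_\nu$ term. What remains is precisely
\[
q' = q - g^{\mu\nu}(\hnmu u\,\phi_\nu) + g^{\mu\nu}u\phi_\mu\phi_\nu - p^\mu\phi_\mu.
\]

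The main obstacle is purely bookkeeping: one must carefully distinguish the representation $\eta(\phi_\mu)$ when it acts on $V$-valued sections (left multiplication) from its action on $\End(V)$-valued sections (commutator), and apply the Leibniz rule correctly when these are interleaved (as in $\hnmu(u\phi_\nu\psi)$). No deep input is needed beyond the compatibility $\hnmu g^{\alpha\beta}=0$ and the relations \eqref{eq-nablaLCu}--\eqref{eq-laplacianLCu} recalled above, together with the $\mu\leftrightarrow\nu$ symmetry of $g^{\mu\nu}$ which is what permits the absorption of several cross terms into the clean form of $p'^\nu$ and $q'$.
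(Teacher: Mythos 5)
Your proof is correct and is essentially the paper's own argument: a direct expansion using $\hnabla'_\mu = \hnabla_\mu + \eta(\phi_\mu)$, with care taken to distinguish left multiplication on $V$-valued sections from the commutator action on $\End(V)$-valued ones, and the $\mu\leftrightarrow\nu$ symmetry of $g^{\mu\nu}$ to collect the cross terms. The only (immaterial) difference is the direction — you substitute into \eqref{eq-P-upq-hatnabla} and rearrange into the primed form, whereas the paper expands \eqref{eq-P-up'q'-hatnabla} and compares with \eqref{eq-P-upq-hatnabla} — and your reading of $(\hnmu u\phi_\nu)$ in $q'$ as $\hnabla_\mu$ applied to the product $u\phi_\nu$ is the intended one.
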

In this proposition, $\phi_\mu$ is as $p^\mu$: it transforms homogeneously in a change of trivializations of $V$.

\begin{proof}
This is a direct computation using relations like $\hnmu' u = \hnmu u + [\phi_\mu, u]$ and $\hnmu' \phi_\nu = \hnmu \phi_\nu + [\phi_\mu, \phi_\nu]$ in \eqref{eq-P-up'q'-hatnabla} and comparing with \eqref{eq-P-upq-hatnabla}.
\end{proof}

\begin{corollary}
\label{cor-p=0}
There is a unique covariant derivative $\nabla$ such that $p^\mu = 0$. This implies that we can always write $P$ in the reduced form
\begin{align}
\label{eq-P-uq}
P &= - ( g^{\mu\nu}  \hnmu u \hnnu  + q).
\end{align}
\end{corollary}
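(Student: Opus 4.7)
The plan is to invoke Proposition~\ref{prop-change-connection-pq} and show that, starting from any covariant derivative $\nabla$ on $V$ with associated triple $(u, p^\mu, q)$, there is a unique field of endomorphisms $\phi_\mu$ such that the shifted $p^{\prime\nu}$ vanishes. By \eqref{eq-p'q'-pqphi}, setting $p_\mu \vc g_{\mu\nu} p^\nu$, the condition $p^{\prime\nu} = 0$ is equivalent to the pointwise Sylvester-type equation
\begin{equation*}
p_\mu(x) = u(x)\,\phi_\mu(x) + \phi_\mu(x)\, u(x).
\end{equation*}

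First I would solve this Sylvester equation. Since $u(x) \in M_N(\C)$ is positive and invertible, its spectrum lies in $(0,\infty)$, hence the linear map $T_{u(x)} : X \mapsto u(x) X + X u(x)$ on $M_N(\C)$ has spectrum contained in the sums $\lambda_i + \lambda_j$ of eigenvalues of $u(x)$, which are strictly positive. In particular $T_{u(x)}$ is invertible, and $\phi_\mu(x) \vc T_{u(x)}^{-1} p_\mu(x)$ provides a unique smooth solution; equivalently one may write $\phi_\mu = \int_0^\infty e^{-su}\, p_\mu\, e^{-su}\, ds$, making smoothness in $x$ manifest. Then $\nmu' \vc \nmu + \eta(\phi_\mu)$ is a covariant derivative on $V$ whose associated $p^{\prime\nu}$ is zero, and the new $q'$ is given by the second formula in \eqref{eq-p'q'-pqphi}, yielding the reduced form \eqref{eq-P-uq}.

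For uniqueness, suppose two covariant derivatives $\nabla$ and $\nabla'$ both yield vanishing $p^\mu$ in \eqref{eq-P-upq-hatnabla}. Writing $\nmu' = \nmu + \eta(\phi_\mu)$, Proposition~\ref{prop-change-connection-pq} forces $u\phi_\mu + \phi_\mu u = 0$, and invertibility of $T_u$ gives $\phi_\mu = 0$, so $\nabla' = \nabla$.

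The only subtle point I expect is the check that $\phi_\mu$ transforms homogeneously under a change of trivialization of $V$, so that $\nmu'$ is a genuine covariant derivative. This follows from uniqueness: in a changed trivialization $u$ and $p_\mu$ both undergo conjugation by the transition matrix, hence so does the unique Sylvester solution $\phi_\mu$, which is precisely the homogeneous transformation rule required of a connection-difference one-form in the adjoint representation. Alternatively one can read this off directly from the integral representation above.
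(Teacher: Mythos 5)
Your proof is correct and follows essentially the same route as the paper: both reduce the condition $p^{\prime\nu}=0$ to the Sylvester equation $u\phi + \phi u = p$ and use the positivity and invertibility of $u$ to get a unique solution, the only difference being that the paper quotes Pedersen's explicit formula $\tfrac{1}{2}\int_{-\infty}^{+\infty} u^{it-1/2}\, p^{\nu}\, u^{-it-1/2}\cosh(\pi t)^{-1}\, dt$ for the inverse while you argue via the spectrum of $X\mapsto uX+Xu$ (or the equivalent representation $\int_0^\infty e^{-su}\,p\,e^{-su}\,ds$). Your added checks of uniqueness of the connection and of the homogeneous transformation of $\phi_\mu$ are correct and slightly more explicit than what the paper records.
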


\begin{proof}
The first part of \eqref{eq-p'q'-pqphi} can be solved in $\phi_\mu$ for the condition $p^{\prime\nu}=0$. Indeed, using results in \cite{Pede76a}, the positivity and invertibility of $u$ implies that for any $\nu$, the equation $u (g^{\mu\nu}\phi_\mu) + (g^{\mu\nu}\phi_\mu) u = p^{\nu}$ has a unique solution given by
\begin{align*}
g^{\mu\nu}\phi_\mu = \tfrac{1}{2} \int_{-\infty}^{+\infty} \frac{u^{it-1/2} \, p^{\nu} \, u^{-it-1/2}}{\cosh(\pi t)} \, dt.
\end{align*}
So, given any covariant derivative to which $p^{\nu}$ is associated as in \eqref{eq-def-P-upq}, we can shift this covariant derivative with the above solution $\phi_\mu$ to impose $p^{\prime\nu}=0$.
\end{proof}

This result extends the one in \cite[Section~1.2.1]{Gilk95a}, which is a key ingredient of the method used there. In the following, we could have started with $P$ written as in \eqref{eq-P-uq}. But, on one hand, we will see that this is not necessary to get $\mathcal{R}_r$ in terms on $u, p^\mu, q$ (at least for $r=2$). On the other hand, we will see in Section~\ref{sec-applications-NCT} that the covariant derivative which is naturally given by the geometric framework of the rational noncommutative torus does not implies $p^\mu=0$, and we will then apply directly the most general result. Obviously, it could be possible to first establish our result for the reduced expression \eqref{eq-P-uq} and then to go to the general result using Prop.~\ref{prop-change-connection-pq}. But this would complicate unnecessarily the presentation of the method and our results.

\subsection{The method}

The method described in \cite{IochMass17a} starts with $P$ written as $P = - g^{\mu\nu} u \pmu \pnu - v^\mu \pmu - w$ and leads to $- P (e^{i x \xi} f) = - e^{i x \xi} \big[ H + K + P ] f$ where $H = g^{\mu\nu} u \xi_\mu \xi_\nu$ and $K = - i \xi_\mu \big( v^\mu +  2 g^{\mu\nu} u \pnu \big)$. 

This can be generalized for a covariant writing of $P$. Using \eqref{eq-def-P-upq}, one gets
\begin{align}
- P (e^{i x \xi} f)
&= \begin{multlined}[t]
e^{i x \xi} \big[ 
-g^{\mu\nu} u \xi_\mu \xi_\nu 
+ i \xi_\mu \big( p^\mu + g^{\mu\nu} (\nnu u) - [\tfrac{1}{2} \alpha^\nu - \beta^\nu ] u + 2 g^{\mu\nu} u \nnu \big) 
\\
g^{\mu\nu} u \nmu \nnu + \big( p^\nu + g^{\mu\nu} (\nmu u) - [\tfrac{1}{2} \alpha^\nu - \beta^\nu] u \big) \nnu + q
\big] f
\end{multlined}
\nonumber
\\
&= - e^{i x \xi} [ H + K + P ] f,
\label{eq-P-exp-HKP-f}
\end{align}
with
\begin{align}
H \vc g^{\mu\nu} u \xi_\mu \xi_\nu, \qquad
K \vc - i \xi_\mu \big( p^\mu + g^{\mu\nu} (\nnu u) - [\tfrac{1}{2} \alpha^\mu - \beta^\mu ] u + 2 g^{\mu\nu} u \nnu \big).
\label{eq-H-K-upq}
\end{align}
These relations look like the expressions of $H$ and $K$ given above (see \cite[eq.~(1.6), (1.7)]{IochMass17a}) with the replacements
\begin{align}
\pmu \mapsto \nmu,
\qquad
v^\mu \mapsto p^\mu + g^{\mu\nu} (\nnu u) - [\tfrac{1}{2} \alpha^\mu - \beta^\mu] u,
\qquad
w \mapsto q.
\label{eq-change-uvw-upq}
\end{align}
As in \cite{IochMass17a}, we have $\Tr[a e^{-tP}\,] = \int \dd x\, \tr[a(x) K(t,x,x)]$ with
\begin{align*}
K(t,x,x) 
&= \tfrac{1}{(2\pi)^d}\int \dd\xi \,e^{-ix.\xi} \,(e^{-tP} \,e^{i x.\xi})
&= \tfrac{1}{(2\pi)^d}\int \dd\xi \,e^{-t(H+K+P)} \,\bbbone 
=\tfrac{1}{t^{d/2}}\tfrac{1}{(2\pi)^d}  \int \dd\xi \,e^{-H-\sqrt{t} K -tP}\,\bbbone. \label{Volterra}
\end{align*} 
Here $\bbbone$ is the constant $1$-valued function. Notice that $K(t,x,x)$ is a density, and that $\abs{g}^{-1/2} K(t,x,x)$ is a true function on $M$. Using the Lebesgue measure $\dd x$ instead of $\dvolg(x)$ is convenient to establish the previous relation which uses Fourier transforms (this point has not been emphasized in \cite{IochMass17a}).

The asymptotics expansion is obtained by the Volterra series 
\begin{align*}
e^{A+B}=e^A +\sum_{k=1}^\infty \int_{\Delta_k} \dd s\, e^{(1-s_1)A}\,B\,e^{(s_1-s_{2})A} \cdots e^{(s_{k-1}-s_k)A} \,B \,e^{s_k A}\,.
\end{align*}
where
\begin{align*}
\Delta_k \vc \{s=(s_1,\dots, s_k)\in \R_+^{k} \, \vert \, 0 \leq s_k \leq s_{k-1} \leq \cdots \leq s_2 \leq s _1 \leq 1\}
\text{ and }\Delta_0 \vc \varnothing \,\,\text{by convention}.
\end{align*}
For $A = -H$ and $B = -\sqrt{t} K -tP$, one gets
\begin{align}
e^{-H-\sqrt{t} K -tP} \bbbone 
= e^{-H }+ \sum_{k=1}^\infty (-1)^k f_k [ (\sqrt{t} K +t P)\otimes \cdots \otimes (\sqrt{t}K +t P) ]
\label{Volt}
\end{align}
with
\begin{align}
f_k(\xi) [ B_1 \otimes \cdots \otimes  B_k ] 
&\vc \int_{\Delta_k} \dd s \, e^{(s_1-1) H(\xi)} \, B_1 \,e^{(s_2-s_1) H(\xi)} \, B_2 \cdots B_k\, e^{-s_k H(\xi)},
\label{eq-fk-def}
\\
f_0(\xi)[\lambda] 
&\vc \lambda\, e^{-H(\xi)},
\nonumber
\end{align}
where $B_i$ are matrix-valued differential operators in $\nmu$ depending on $x$ and (linearly in) $\xi$, and $\lambda\in \C$. Collecting the powers of $\sqrt{t}$, one gets
\begin{align*}
\Tr \,[ a e^{-tP}] 
&\underset{t\downarrow 0}{\simeq} 
t^{- d/2} \sum_{r=0}^\infty a_{r}(a, P) \, t^{r/2}
\end{align*}
Each $a_{r}(a, P)$ contains an integration along $\xi$, which kills all the terms in odd power in $\sqrt{t}$ since $K$ is linear in $\xi$ while $H$ is quadratic in $\xi$: $a_{2n+1}(a, P) = 0$ for any $n\in \N$. For instance, the first two non-zero local coefficients are\footnote{Notice the change with convention in \cite{IochMass17a} : $a_{2r}$ here corresponds to  $a_r$ in \cite{IochMass17a}.}
\begin{align*}
a_0(a, P)(x) 
&= \tfrac{\abs{g}^{-1/2}}{(2 \pi)^{d}} \tr [ a(x) \int \dd\xi\, e^{-H(x,\xi)}  \label{a0(x)} ] ,
\nonumber
\\
a_2(a, P)(x)  
&= \begin{multlined}[t]
\tfrac{\abs{g}^{-1/2}}{(2 \pi)^{d}} \tr \,[ a(x) \int \dd\xi \int_{\Delta_2} \dd s \, e^{(s_1-1)H}\,K\,e^{(s_2 -s_1)H}\, K\,e^{-s_2 H} ]
\\
- \tfrac{1}{(2 \pi)^{d}} \tr \,[ a(x) \int \dd\xi \int_{\Delta_1} \dd s \,e^{(s_1-1)H}\,P\, e^{-s_1 H}]
\end{multlined}
\end{align*}
(remark the coefficient $\abs{g}^{-1/2}$ added here to be compatible with \eqref{eq-araP-dvol}).

The strategy to compute these coefficient is twofold. First, we get rid of the $\nmu$'s in the arguments $B_i$. This is done using \cite[Lemma~2.1]{IochMass17a}, which can be applied here since $\nmu$ is a derivation: by iteration of the relation
\begin{multline}
\label{eq-fk-nabla-propagation}
f_k(\xi)[B_1 \otimes \cdots \otimes B_i \nmu \otimes \cdots \otimes B_k] =\sum_{j=i+1}^k f_k(\xi)[B_1 \otimes \cdots \otimes(\nmu B_j)\otimes \cdots\otimes  B_k]
\\
- \sum_{j=i}^k f_{k+1}(\xi)[B_1 \otimes \cdots \otimes B_j\otimes (\nmu H)\otimes B_{j+1}\otimes \cdots\otimes  B_k], 
\end{multline}
we transform each original term into a sum of operators acting on arguments of the form $B_1 \otimes \cdots \otimes  B_k = \BB_k^{\mu_1\dots \mu_{\ell}}\,\xi_{\mu_1}\cdots \xi_{\mu_{\ell}}$ (for different values of $k$) where now all the $B_i$ are matrix-valued functions (of $x$ and $\xi$), or, equivalently, the $\BB_k^{\mu_1\dots\mu_\ell}$ are $M_N(\C)^{\otimes^k}$-valued functions (of $x$ only).

The second step of the strategy is to compute the operators applied to the arguments $\BB_k^{\mu_1\dots\mu_\ell}$. They all look like
\begin{align*}
\tfrac{1}{(2\pi)^d} \int \dd\xi \, \xi_{\mu_1} \cdots \xi_{\mu_\ell} \, f_k(\xi)[ \BB_k^{\mu_1\dots\mu_\ell} ] \in M_N(\C), \end{align*}
where the $f_k(\xi)$ are defined by \eqref{eq-fk-def} and depend only on $u$ through $H$. As shown in \cite{IochMass17a}, these operators are related to operators $T_{k,p}(x) : M_N(\C)^{\otimes^{k+1}} \to M_N(\C)^{\otimes^{k+1}}$ defined by
\begin{align}
\label{termgeneric}
T_{k,p}(x)
&\vc \tfrac{1}{(2 \pi)^{d}} \int_{\Delta_k} \dd s \int \dd\xi\, \xi_{\mu_1} \cdots \xi_{\mu_{2p}} \, e^{- \norm{\xi}^2 \, C_k(s,u(x))},
\\
T_{0,0}(x) 
&\vc \tfrac{1}{(2 \pi)^{d}} \int \dd\xi\, \, e^{- \norm{\xi}^2 u(x)} \in M_N(\C),
\nonumber
\end{align}
where $\norm{\xi}^2 \vc g^{\mu\nu} \xi_\mu \xi_\nu$ and the $C_k(s, A) : M_N(\C)^{\otimes^{k+1}} \to M_N(\C)^{\otimes^{k+1}}$ are the operators
\begin{align*}
C_k(s,A) [ B_0 \otimes B_1 \otimes \cdots \otimes B_k]
= (1-s_1) \, B_0 A \otimes B_1 \otimes \cdots \otimes B_k
&+ (s_1-s_2)\, B_0 \otimes B_1 A \otimes \cdots \otimes B_k
\\
&+ \cdots 
+ s_k \, B_0 \otimes B_1 \otimes \cdots \otimes B_k  A.
\end{align*}
Denote by $\mul : M_N(\C)^{\otimes^{k+1}} \to M_N(\C), B_0 \otimes B_1 \otimes \cdots \otimes B_k \mapsto  B_0 B_1 \cdots B_k$ the matrix multiplication, then
\begin{align*}
\tfrac{1}{(2\pi)^d} B_0 \int \dd\xi \, \xi_{\mu_1} \cdots \xi_{\mu_\ell} \, f_k(\xi)[ B_1 \otimes \cdots \otimes B_k]
&= \mul\circ T_{k,p}(x)[ B_0 \otimes B_1 \otimes \cdots \otimes B_k],
\end{align*}
so that each function $a_r(a, P)(x)$ is expressed formally as a sum
\begin{align}
\label{eq-a-T-rel}
a_r(a, P)(x)
&= \abs{g}^{-1/2}\sum \tr \big[ \mul\circ T_{k,p}(x)[ a(x) \otimes B_1(x) \otimes \cdots \otimes B_k(x)] \big].
\end{align}
This sum comes form the collection of the original terms in $K$ and $P$ producing the power $t^{r/2}$ and the application of \cite[Lemma~2.1]{IochMass17a} \textit{i.e.} \eqref{eq-fk-nabla-propagation}. This sum relates the $r$ on the LHS to the possible couples $(k,p)$ on the RHS. The $B_i$ are matrix-valued functions (of $x$) expressed in terms of the original constituents of $H$, $K$, and $P$ and their covariant derivatives.

Let us mention here how the procedure introduced in \cite{IochMass17a} is adapted to the situation where we have the left factor $a(x)$: in \cite{IochMass17a}, the relation between the $T_{k,p}(x)$ and the $f_k(\xi)$ used a trick which consist to add a $B_0 = \bbbone$ argument in front of $B_1 \otimes \cdots \otimes B_k$ (the purpose of the $\kappa$ map defined in \cite{IochMass17a}). Here, $\bbbone$ is simply replaced by $a(x)$. But, since 
\begin{equation*}
\mul\circ T_{k,p}(x)[ B_0 \otimes B_1(x) \otimes \cdots \otimes B_k(x)] = B_0\, \mul\circ T_{k,p}(x)[ \bbbone \otimes B_1(x) \otimes \cdots \otimes B_k(x)], 
\end{equation*}
it is now easy to propose an expression for the factor $\mathcal{R}_r$ as a sum
\begin{align}
\label{eq-R-Tkp}
\mathcal{R}_r = \abs{g}^{-1/2}\sum \mul\circ T_{k,p}(x)[ \bbbone \otimes B_1(x) \otimes \cdots \otimes B_k(x)].
\end{align}

One of the main result of \cite{IochMass17a} is to express the operators $T_{k,p}$ in terms of universal functions through a functional calculus relation involving the spectrum of $u$ (these relations take place at any fixed value of $x \in M$, that we omit from now on). For $r_i > 0$, $\alpha\in \R$, and $k \in \N$, let 
\begin{align*}
I_{\alpha,k}(r_0, r_1, \dots, r_k) 
&\vc 
\int_{\Delta_k} \dd s\, [(1-s_1)r_0 + (s_1-s_2) r_1 + \dots + s_k r_k]^{-\alpha} 
\\
&=
\int_{\Delta_k} \dd s\, [r_0 + s_1 (r_1 - r_0) + \dots + s_k (r_k - r_{k-1})]^{-\alpha},
\\
\shortintertext{so that}
I_{\alpha,k}(r_0, \dots, r_0) & =
\tfrac{1}{k!} r_0^{-\alpha}.
\end{align*}
In these functions, the arguments $r_i > 0$ are in the spectrum of the positive matrix $u$.

Denote by $R_i(A) : M_N(\C)^{\otimes^{k+1}} \to M_N(\C)^{\otimes^{k+1}}$ the right multiplication on the $i$-th factor
\begin{align*}
R_i(A) [ B_0 \otimes B_1 \otimes \cdots \otimes B_k]
\vc B_0 \otimes B_1 \otimes \cdots \otimes B_i A \otimes \cdots \otimes B_k,
\end{align*}
then 
\begin{align*}
T_{k,p}&
= g_d \, G(g)_{\mu_1\dots \mu_{2p}} \, I_{d/2+p,k} \big(R_0(u),R_1(u),\dots,R_k(u)\big),
\end{align*}
with
\begin{align}
g_d 
&\vc \tfrac{1}{(2\pi)^d} \int_{\R^d} \dd\xi\, e^{-\abs{\xi}_{g(x)}^2}
= \tfrac{\abs{g}^{1/2}}{2^{d}\,\pi^{d/2}}\,, 
\label{eq-gd}
\\
G(g)_{\mu_1\dots \mu_{2p}} 
&\vc  \tfrac{1}{(2\pi)^d\,g_d} \int \dd\xi\, \xi_{\mu_1} \cdots \xi_{\mu_{2p}}\, e^{-g^{\alpha\beta} \xi_\alpha\xi_\beta}
\nonumber
\\
&\, = \tfrac{1}{2^{2p}\,p!} \, ( \sum_{\rho \in S_{2p}} g_{\mu_{\rho(1)} \mu_{\rho(2)}}\cdots g_{\mu_{\rho(2p-1)} \mu_{\rho(2p)}} )
= \tfrac{(2p)!}{2^{2p}\,p!} \,g_{(\mu_1\mu_2}\cdots g_{\mu_{2p-1}\mu_{2p})},
\nonumber 
\end{align}
where $S_{2p}$ is the symmetric group of permutations on $2p$ elements and the parenthesis in the index of $g$ is the complete symmetrization over all indices. Notice that the factor $\abs{g}^{1/2}$ in $g_d$ simplifies with the factor $\abs{g}^{-1/2}$ in \eqref{eq-R-Tkp}.

The universal functions $I_{\alpha,k}$ have been studied in \cite[Section~3]{IochMass17a}. They satisfy a recursive formula valid for $1\neq \alpha\in \R$ and $k\in \N^*$:
\begin{align}
\label{eq-I-recursive}
I_{\alpha,k}(r_0,\dots,r_k) = \tfrac{1}{(\alpha-1)}(r_{k-1}-r_k)^{-1}[I_{\alpha-1,k-1}(r_0,\dots,r_{k-2},r_k)-I_{\alpha-1,k-1} (r_0,\dots,r_{k-1})].
\end{align}
It is possible to give some expressions for the $I_{\alpha,k}$ for any $(\alpha,k)$. They depend on the parity of $d$. For $d$ even, the main results are that $I_{n,k}$ are Laurent polynomials for $\N \ni n = (d - r)/2 + k \geq k+1$ ($d \geq r + 2$) and $k\in \N^*$, while they exhibit a more complicated expression in terms of $\log$ functions for $\N \ni n = (d - r)/2 + k \leq k$ ($d \leq r$). For $d$ odd, the $I_{n,k}$ can be expressed in terms of square roots of the $r_i$, but without an \textit{a priori} general expression.

The recursive formula \eqref{eq-I-recursive} can be used to write any $I_{\alpha,k}$ appearing in the computation of the operators $T_{k,p}$ in terms of $I_{\alpha-k+1,1}$. The case $\alpha=1$ appears in dimension $d=2$: the fundamental spectral function is $I_{1,1}$, and a direct computation shows that
\begin{equation*}
I_{1,1} = \frac{\ln(r_0) - \ln(r_1)}{r_0 - r_1}
\end{equation*}
Using $\frac{x}{e^x - 1} = \sum_{n=0}^\infty \tfrac{B_n}{n!} x^n$, where $B_n$ are the Bernoulli numbers, one gets, with $x= \ln(r_0) - \ln(r_1)$,
\begin{equation*}
r_1\, I_{1,1}(r_0, r_1) = \sum_{n=0}^\infty \tfrac{B_n}{n!} \,[ \ln(r_0) - \ln(r_1) ]^n.
\end{equation*}
A relation between the Bernoulli numbers and $a_2(a, P)$ has already been noticed in the computation of the modular curvature for the noncommutative two torus in \cite{ConnMosc14a} (see Section~\ref{sec-applications-NCT}).

\subsection{\texorpdfstring{The results for $a_2(a,P)$}{The results for a2(a,P)}}
\label{subsec-results}

In the following, we restrict ourselves to the computation of $a_2(a,P)$. This section gives the main results of the paper. The computations are detailed in Section~\ref{sec-details-computations}.

Let us introduce the following notation. For any $x \in M$, denote by $r_i = r_i(x) > 0$ an element in the (discrete) spectrum $\spec(u)$ of $u=u(x)$ and by $E_{r_i} = E_{r_i}(x)$ the associated projection of $u$. This implies that
\begin{equation*}
u = \sum_{r_0 \in \spec(u)} r_0 E_{r_0} = r_0 E_{r_0}
\end{equation*}
where in the last expression we omit the summation over $r_0$, as will be the case in many expressions given in the following. Notice that $\bbbone = \sum_{r_0 \in \spec(u)} E_{r_0}$ and $E_{r_0} E_{r_1} = \delta_{r_0, r_1} E_{r_0}$.

\begin{theorem}
\label{thm-R-uvw}
For $P$ given by \eqref{eq-def-P-uvw}, $a_2(a,P)(x) = \tr[ a(x) \mathcal{R}_2(x) ]$ with
\begin{align}
\mathcal{R}_2
&= \tfrac{1}{2^{d}\,\pi^{d/2}} \big[ 
\begin{aligned}[t]
&\alpha\, r_0^{-d/2+1}\, 
E_{r_0}
+ F_{\partial u}^\mu(r_0, r_1) \, 
E_{r_0} (\pmu u) E_{r_1}
\\[1mm]
& + g^{\mu\nu} F_{\partial \partial u}(r_0, r_1) \, 
E_{r_0} (\pmu \pnu u) E_{r_1}
+ g^{\mu\nu} F_{\partial u, \partial u}(r_0, r_1, r_2) \, 
E_{r_0} (\pmu u) E_{r_1} (\pnu u) E_{r_2}
\\[1mm]
& + F_{w}(r_0, r_1) \, 
E_{r_0} w E_{r_1}
+ F_{v, \mu}(r_0, r_1) \, 
E_{r_0} v^\mu E_{r_1}
\\[1mm]
& + F_{v,\partial u}(r_0, r_1, r_2) \, 
E_{r_0} v^\mu E_{r_1} (\pmu u) E_{r_2}
+ F_{\partial u,v}(r_0, r_1, r_2) \, 
E_{r_0} (\pmu u) E_{r_1} v^\mu E_{r_2}
\\[1mm]
& + g_{\mu\nu} F_{v,v}(r_0, r_1, r_2) \, 
E_{r_0} v^\mu E_{r_1} v^\nu E_{r_2}
+ F_{\partial v}(r_0, r_1) \, 
E_{r_0} (\pmu v^\mu) E_{r_1}
 \big],
 \end{aligned}
\label{eq-R-uvw-functions}
\end{align}
where the sums over the $r_0, r_1, r_2$ in the spectrum of $u$ are omitted, the spectral functions $F$ are given below, and
\begin{align*}
\alpha
&\vc \begin{aligned}[t]
& \tfrac{1}{3} (\pmu\pnu g^{\mu\nu}) 
-\tfrac{1}{12} g^{\mu\nu} g_{\rho \sigma}(\pmu\pnu g^{\rho\sigma}) 
+ \tfrac{1}{48} g^{\mu\nu} g_{\rho \sigma} g_{\alpha\beta}(\pmu g^{\rho\sigma})(\pnu g^{\alpha\beta})
\\[1mm]
& + \tfrac{1}{24} g^{\mu\nu} g_{\rho \sigma} g_{\alpha\beta}(\pmu g^{\rho\alpha})(\pnu g^{\sigma\beta}) 
-\tfrac{1}{12} g_{\rho\sigma}(\pmu g^{\mu\nu})(\pnu g^{\rho\sigma}) 
\\[1mm]
& + \tfrac{1}{12} g_{\rho\sigma}(\pmu g^{\nu\rho})(\pnu g^{\mu\sigma}) 
- \tfrac{1}{4} g_{\rho\sigma}(\pmu g^{\mu\rho})(\pnu g^{\nu\sigma}).
\end{aligned}
\end{align*}
\end{theorem}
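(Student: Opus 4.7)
The plan is to specialize the general machinery of Section~\ref{sec-method-results} to the case $r=2$. Expand $e^{-H-\sqrt{t}K-tP}\bbbone$ via the Volterra series \eqref{Volt} and collect the terms producing the power $t^1$ in the asymptotics (equivalently, $t^{(2-d)/2}$ in the heat trace): this gives one term with a single insertion of $P$ via $f_1$ and one term with two insertions of $K$ via $f_2$. Since in the generalization to $\Tr[a\,e^{-tP}]$ the matrix $a(x)$ multiplies the Volterra expansion only on the left, the observation surrounding \eqref{eq-R-Tkp} immediately identifies $\mathcal{R}_2$ as the corresponding sum of operators $\mul\circ T_{k,p}$ evaluated on tensor products with $\bbbone$ in the $B_0$ slot.

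The next step is to remove every remaining $\nmu$ occurring inside the arguments of $f_k$. Using the propagation identity \eqref{eq-fk-nabla-propagation}, each $\nmu$ is pushed either onto one of the following matrix arguments or onto $H$, the latter producing a new $f_{k+1}$ term with an extra factor linear in $\xi$. Iterating until no derivatives remain inside the $f_k$'s, every contribution takes the form $\mul\circ T_{k,p}(x)[\bbbone\otimes B_1\otimes\cdots\otimes B_k]$ with $B_i$ a matrix-valued function built polynomially from $u$, $p^\mu$, $q$, $\nmu u$, $\nmu\nnu u$, $\nmu p^\nu$, and the metric data $g^{\mu\nu}$, $\alpha^\mu$, $\beta^\mu$ entering through the covariant rewriting of $K$ given in \eqref{eq-H-K-upq}.

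Now invoke the closed form $T_{k,p}=g_d\,G(g)_{\mu_1\dots\mu_{2p}}\,I_{d/2+p,k}(R_0(u),\dots,R_k(u))$ and the spectral decomposition $u=r_0 E_{r_0}$. Functional calculus in the commuting operators $R_i(u)$ converts each $T_{k,p}$ evaluation into a sum over $(r_0,\dots,r_k)\in\spec(u)^{k+1}$ of $I_{d/2+p,k}(r_0,\dots,r_k)$ sandwiched between the projections $E_{r_i}$, contracted with $G(g)_{\mu_1\dots\mu_{2p}}$. The factor $g_d$ exactly absorbs the prefactor $\abs{g}^{-1/2}$ in \eqref{eq-R-Tkp}, producing the overall coefficient $\tfrac{1}{2^d\pi^{d/2}}$. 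Grouping the resulting monomials by their structural type (which derivative of $u$, $v^\mu$ or $w$ appears, and in which order) produces exactly the ten families of terms in \eqref{eq-R-uvw-functions} and defines each spectral function $F_\bullet$ as an explicit linear combination of the $I_{\alpha,k}$.

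The last step, and the one carrying the real combinatorial burden, is the conversion from the covariant data $(p^\mu,q)$ back to the local data $(v^\mu,w)$ via \eqref{eq-change-uvw-upq}, carried out with $\nmu=\pmu$ (trivial connection, which suffices since $\mathcal{R}_2$ is gauge covariant). Substituting $p^\mu=v^\mu-g^{\mu\nu}(\pnu u)+[\tfrac12\alpha^\mu-\beta^\mu]u$ and $q=w$ into the $(p,q)$-expression redistributes terms among the ten families and, crucially, generates purely scalar contributions proportional to $r_0^{-d/2+1}E_{r_0}$: these are the terms quadratic in $\pmu g^{\rho\sigma}$ and linear in $\pmu\pnu g^{\rho\sigma}$, whose sum is precisely the coefficient $\alpha$ in the statement. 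The main obstacle is exactly this bookkeeping: one must track every metric-derivative factor produced by the shift $p^\mu\leftrightarrow v^\mu$, by $\hnmu$ versus $\nmu$ via \eqref{eq-nablaLCu}--\eqref{eq-laplacianLCu}, and by the $G(g)_{\mu\nu}=\tfrac12 g_{\mu\nu}$ contractions, and then verify that all $v$- and $\pmu u$-independent contributions combine into the single scalar $\alpha$. This is the point at which delegating to the ancillary \texttt{Mathematica} notebook \cite{IochMass17b} becomes indispensable, and the detailed verification is the content of Section~\ref{sec-details-computations}.
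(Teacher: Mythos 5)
Your overall machinery is exactly the paper's: Volterra expansion keeping the $f_1[P]$ and $f_2[K\otimes K]$ contributions to the $t^1$ order, propagation of the derivatives via \eqref{eq-fk-nabla-propagation}, reduction to the operators $T_{k,p}$ with $k\le 4$, functional calculus through $I_{d/2+p,k}$ and the projections $E_{r_i}$, and absorption of $\abs{g}^{-1/2}$ by $g_d$; the replacement of the $\bbbone$ in the $B_0$ slot by $a(x)$ is also the paper's device for extracting $\mathcal{R}_2$. Where you differ is the order in which the two forms of $P$ are treated: you run the computation in the covariant variables $(u,p^\mu,q,\nmu)$ first and then substitute back to $(u,v^\mu,w,\pmu)$, whereas the paper proves Theorem~\ref{thm-R-uvw} directly from the local form \eqref{eq-def-P-uvw} (the arguments $B_1\otimes\cdots\otimes B_k$ and their contractions being those of \cite[Section~4.1]{IochMass17a}) and only afterwards obtains Theorem~\ref{thm-R-upq} by the formal replacement \eqref{eq-change-uvw-upq} followed by the $\nmu\to\hnmu$ conversion. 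Your reversed order is legitimate --- the paper itself remarks after Corollary~\ref{cor-p=0} that one could start covariantly and work back via Proposition~\ref{prop-change-connection-pq} --- but it buys nothing for this theorem and costs an extra conversion.

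The one substantive inaccuracy is your account of the scalar coefficient $\alpha$. It is not ``generated by the shift $p^\mu\leftrightarrow v^\mu$'': in the direct computation the terms proportional to $r_0^{-d/2+1}E_{r_0}$ come from the propagation step itself, namely from the arguments $u\otimes u$, $u\otimes u\otimes u$ and $u^{\otimes 4}$ whose coefficients in $\pmu g^{\rho\sigma}$ and $\pmu\pnu g^{\rho\sigma}$ are produced each time a derivative lands on the metric inside $H$ (see the $k=2,3,4$ lists in Section~\ref{sec-details-computations}); these terms are present in either set of variables. If your step-3 output is the covariant expression of Theorem~\ref{thm-R-upq}, its scalar coefficient is already $\tfrac{1}{6}R$, and the substitution back to $(v^\mu,w)$ together with $\hnmu\to\pmu$ accounts only for the difference $\alpha-\tfrac{1}{6}R$ (the combination $-\tfrac{1}{4}\alpha^\mu\beta_\mu+\tfrac{1}{2}\beta^\mu\beta_\mu+\cdots$ recorded at the end of Section~\ref{sec-details-computations}), not for all of $\alpha$. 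Apart from this misattribution, delegating the term-by-term bookkeeping and the explicit spectral functions $F$ to the list \eqref{eq-def-F-I} and the ancillary notebook is precisely what the paper does.
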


\bigskip
The spectral functions in \eqref{eq-R-uvw-functions} are given in terms of the universal function $I_{d/2, 1}$ by
\begin{align*}
F_{w}(r_0, r_1)
&= I_{d/2,1}(r_0, r_1),
\\[1mm]
F_{\partial v}(r_0, r_1)
&= 2 r_0 \frac{I_{d/2,1}(r_0, r_0) - I_{d/2,1}(r_0, r_1)}{d (r_0 - r_1)} ,
\\[1mm]
F_{\partial \partial u}(r_0, r_1)
&= - r_0 \frac{4 r_0 I_{d/2,1}(r_0, r_0) + \big( (d - 4) r_0 - d r_1 \big) I_{d/2,1}(r_0, r_1)}%
{d (r_0 -  r_1)^2} ,
\\[1mm]
F_{\partial u}^\mu(r_0, r_1) 
&= [ \tfrac{1}{2} \alpha^\mu - \beta^\mu]
r_0 \frac{4 r_0 I_{d/2,1}(r_0, r_0) + \big( (d - 4) r_0 - d r_1 \big) I_{d/2,1}(r_0, r_1)}%
{d (r_0 -  r_1)^2} ,
\\[1mm]
F_{v, \mu}(r_0, r_1)
&= - \alpha_\mu 
r_0 \frac{I_{d/2,1}(r_0, r_0) - I_{d/2,1}(r_0, r_1)}{d (r_0 - r_1)} 
- \tfrac{1}{2} [ \tfrac{1}{2} \alpha_\mu - \beta_\mu ] I_{d/2,1}(r_0, r_1),
\\[1mm]
F_{v,v}(r_0, r_1, r_2)
&= \frac{ I_{d/2,1}(r_0, r_1) - I_{d/2,1}(r_0, r_2) }{d (r_1 - r_2)} ,
\\[1mm]
F_{\partial u,v}(r_0, r_1, r_2)
&= \frac{2 r_0}{d} [ 
\frac{I_{d/2,1}(r_0, r_0)}{(r_0 - r_1)(r_0 - r_2)}
+ \frac{I_{d/2,1}(r_0, r_1)}{(r_1 - r_0)(r_1 - r_2)}
+ \frac{I_{d/2,1}(r_0, r_2)}{(r_2 - r_0)(r_2 - r_1)}
],
\\[1mm]
F_{v,\partial u}(r_0, r_1, r_2)
&= \begin{multlined}[t]
- 2 r_0 \frac{I_{d/2,1}(r_0, r_0)}{d (r_0 - r_2) (r_1 - r_2)} 
- 2 r_1 \frac{I_{d/2,1}(r_0, r_1)}{d (r_1 - r_2)^2}  
\\
- \frac{\big(  (d - 4) r_0 r_1 - (d - 2) r_0 r_2 - (d - 2) r_1 r_2 + d r_2^2  \big) I_{d/2,1}(r_0, r_2)}%
{d (r_0 - r_2) (r_1 - r_2)^2}  ,
\end{multlined}
\\[1mm]
F_{\partial u, \partial u}(r_0, r_1, r_2) 
&= \begin{multlined}[t]
\frac{4 r_0}{d (r_0 - r_1) (r_0 - r_2)^2 (r_1 - r_2)^2} 
\\
\times \big[
r_0 (r_1 - r_2) (r_0 - 2 r_1 + r_2) I_{d/2,1}(r_0, r_0)
+ r_1 (r_0 - r_2)^2 I_{d/2,1}(r_0, r_1)
\\
+ \tfrac{1}{2}(r_0 - r_1) \big( (d - 4) r_0 r_1 - (d - 2) r_0 r_2 - d r_1 r_2 + (d + 2) r_2^2 \big) I_{d/2,1}(r_0, r_2)
\big].
\end{multlined}
\end{align*}

\begin{theorem}
\label{thm-R-upq}
For $P$ given by \eqref{eq-def-P-upq}, $a_2(a,P)(x) = \tr[ a(x) \mathcal{R}_2(x) ]$ with
\begin{align}
\mathcal{R}_2
&= \tfrac{1}{2^{d}\,\pi^{d/2}}\, [\,
\begin{aligned}[t]
& \tfrac{1}{6} R \, r_0^{-d/2+1} \, 
E_{r_0}
 + G_{q}(r_0, r_1) \, 
E_{r_0} q E_{r_1}
+ g^{\mu\nu} G_{\hnabla \hnabla u}(r_0, r_1) \, 
E_{r_0} (\hnmu \hnnu u) E_{r_1}
\\[1mm]
& + G_{\hnabla p}(r_0, r_1) \, 
E_{r_0} (\hnmu p^\mu) E_{r_1}
+ g^{\mu\nu}  G_{\hnabla u, \hnabla u}(r_0, r_1, r_2) \, 
E_{r_0} (\hnmu u) E_{r_1} (\hnnu u) E_{r_2}
\\[1mm]
& + G_{p, \hnabla u}(r_0, r_1, r_2) \, 
E_{r_0} p^\mu E_{r_1} (\hnmu u) E_{r_2}
+ G_{\hnabla u, p}(r_0, r_1, r_2) \, 
E_{r_0} (\hnmu u) E_{r_1} p^\mu E_{r_2}
\\[1mm]
& + G_{p, p}(r_0, r_1, r_2) \, 
E_{r_0} p^\mu E_{r_1} p_\mu E_{r_2}
\,]
\end{aligned}
\label{eq-R-upq-functions}
\end{align}
where the sums over the $r_0, r_1, r_2$ in the spectrum of $u$ are omitted, the spectral functions $G$ are given below, and $R$ is the scalar curvature of $g$.
\end{theorem}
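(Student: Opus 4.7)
The strategy is to reduce Theorem \ref{thm-R-upq} to Theorem \ref{thm-R-uvw} via the dictionary \eqref{eq-change-uvw-upq}, thus avoiding a re-run of the full Volterra expansion. Since Theorem \ref{thm-R-uvw} already gives $\mathcal{R}_2$ in any trivialization, it suffices to re-express the local formula in the covariant variables $(u, p^\mu, q)$ attached to the connection $\nabla$, and then to recognize the result as \eqref{eq-R-upq-functions}.

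I would first substitute $w = q$ and $v^\mu = p^\mu + g^{\mu\nu}(\nabla_\nu u) - [\tfrac{1}{2}\alpha^\mu - \beta^\mu]\, u$ into \eqref{eq-R-uvw-functions}. Each occurrence $E_{r_i} v^\mu E_{r_j}$ splits into three pieces: a $p^\mu$ piece; a $\nabla u$ piece that combines with pre-existing $\nabla u$ factors to generate the bilinear contributions $G_{\hnabla u,\hnabla u}$, $G_{p, \hnabla u}$ and $G_{\hnabla u, p}$; and a $u$ piece proportional to $[\tfrac{1}{2}\alpha^\mu - \beta^\mu]$ that feeds into the scalar-curvature coefficient of $r_0^{-d/2+1}E_{r_0}$. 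Using \eqref{eq-nablaLCu}, \eqref{eq-laplacianLCu} and $\hnabla_\mu p^\mu = \nabla_\mu p^\mu - \tfrac{1}{2}\alpha_\mu p^\mu$, I would then promote the remaining partial derivatives to total covariant derivatives $\hnabla_\mu$; the extra Christoffel contractions $g^{\rho\sigma}\Gamma^\mu_{\rho\sigma} = \tfrac{1}{2}\alpha^\mu - \beta^\mu$ produced in this step must exactly absorb the explicit $[\tfrac{1}{2}\alpha^\mu - \beta^\mu]$ factors appearing in $F^\mu_{\partial u}$ and $F_{v,\mu}$, leaving only covariant combinations in $\hnabla\hnabla u$, $\hnabla u$, $\hnabla p^\mu$, $p^\mu$ and $q$.

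Next, I would isolate the coefficient of $r_0^{-d/2+1}E_{r_0}$, the sole purely $u$-dependent contribution. It collects the scalar $\alpha$ of Theorem \ref{thm-R-uvw} together with every secondary term produced by the $u$-piece of $v^\mu$ acted on by $F_{v,\mu}$, $F_{v, \partial u}$, $F_{\partial u, v}$, $F_{v,v}$ and $F_{\partial v}$, the relevant spectral functions being evaluated at coincident arguments via $I_{d/2,1}(r,r) = r^{-d/2}$ together with the appropriate derivative limits when a denominator $r_0 - r_1$ vanishes. The claim is that this sum equals $\tfrac{1}{6}R$ with
\[
R = g^{\mu\nu}\bigl(\partial_\rho \Gamma^\rho_{\mu\nu} - \partial_\nu \Gamma^\rho_{\mu\rho} + \Gamma^\rho_{\rho\sigma}\Gamma^\sigma_{\mu\nu} - \Gamma^\rho_{\nu\sigma}\Gamma^\sigma_{\mu\rho}\bigr),
\]
expanded in terms of $g^{\mu\nu}$ and its partial derivatives. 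The surviving non-geometric coefficients are then repackaged into the $G$-functions of the statement by applying the recursion \eqref{eq-I-recursive} to handle coincident spectral arguments as needed.

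The main obstacle is this scalar-curvature identification: it is a long purely metric identity in coordinates, independent of the gauge connection $A_\mu$ and of the spectrum of $u$. Such an identity is best verified by a symbolic-algebra check, as is in fact done in the ancillary Mathematica notebook \cite{IochMass17b}. By contrast, the bilinear spectral-function bookkeeping—while tedious—is algorithmic once the substitution is made.
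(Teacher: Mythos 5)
Your plan is essentially the route the paper itself takes: apply the dictionary \eqref{eq-change-uvw-upq}, split each $v^\mu$ into its $p^\mu$, $\hnabla u$ and $[\tfrac{1}{2}\alpha^\mu-\beta^\mu]u$ pieces, promote the derivatives to $\hnmu$, verify that the residual non-covariant single-derivative terms cancel (the paper's check $G_{\hnabla u}^\mu=G_{p,\mu}=0$), and identify the coefficient of $r_0^{-d/2+1}E_{r_0}$ with $\tfrac{1}{6}R$ by a coordinate computation. The bilinear bookkeeping and the scalar-curvature identity are exactly the verifications the paper performs (and delegates in part to the ancillary notebook), so your assessment of where the work lies is accurate.

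The one step that does not survive scrutiny as literally written is the very first one. Equation \eqref{eq-change-uvw-upq} is a replacement rule inside the symbols $H$ and $K$ of the \emph{covariant} computation \eqref{eq-H-K-upq}; it is not the identity relating the coefficients $(v^\mu,w)$ of \eqref{eq-def-P-uvw} to $(p^\mu,q)$ of \eqref{eq-def-P-upq}. Expanding $\nmu=\pmu+A_\mu$ in \eqref{eq-def-P-upq} gives $v^\nu=p^\nu+g^{\mu\nu}(\nmu u)-[\tfrac{1}{2}\alpha^\nu-\beta^\nu]u+2g^{\mu\nu}uA_\mu$, and $w$ likewise acquires $A$-dependent terms; so substituting your formulas directly into \eqref{eq-R-uvw-functions} is only valid when the gauge potential vanishes, and your promotion of $\pmu$ to $\hnmu$ accounts only for the Christoffel contractions, not for the $[A_\mu,\cdot]$ pieces. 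The paper sidesteps this by applying the dictionary one level earlier: since the Volterra expansion, the propagation identity \eqref{eq-fk-nabla-propagation}, the arguments $B_1\otimes\cdots\otimes B_k$ and the operators $T_{k,p}$ use only that the first-order operator is a derivation, the entire derivation of Theorem~\ref{thm-R-uvw} runs verbatim with $\pmu$ replaced by $\nmu$, yielding \eqref{eq-R-uvw-functions} with the same spectral functions $F$ but with $\nmu$ in place of $\pmu$ and with $v^\mu$, $w$ standing for the combinations in \eqref{eq-change-uvw-upq}; from that starting point your remaining steps go through unchanged. To salvage your version one would have to either track the $A_\mu$ terms and show they cancel, or argue separately that both sides of \eqref{eq-R-upq-functions} are covariant and hence may be compared in a gauge where $A_\mu$ vanishes at the point considered (which still requires care with the curvature of $\nabla$ in the second-derivative terms); neither argument appears in your write-up.
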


The spectral functions in \eqref{eq-R-upq-functions} are given in terms of the spectral functions $F$ by
\begin{align*}
G_q(r_0, r_1)
&= F_{w}(r_0, r_1),
\\[1mm]
G_{\hnabla \hnabla u}(r_0, r_1)
&= F_{\partial \partial u}(r_0, r_1) + F_{\partial v}(r_0, r_1),
\\[1mm]
G_{\hnabla p}(r_0, r_1)
&= F_{\partial v}(r_0, r_1),
\\[1mm]
G_{\hnabla u, \hnabla u}(r_0, r_1, r_2)
&= F_{\partial u, \partial u}(r_0, r_1, r_2)
+ F_{v,\partial u}(r_0, r_1, r_2)
+ F_{\partial u,v}(r_0, r_1, r_2)
+ F_{v,v}(r_0, r_1, r_2)
\\[1mm]
G_{p, \hnabla u}(r_0, r_1, r_2)
&= F_{v,\partial u}(r_0, r_1, r_2)
+ F_{v,v}(r_0, r_1, r_2),
\\[1mm]
G_{\hnabla u, p}(r_0, r_1, r_2)
&= F_{\partial u,v}(r_0, r_1, r_2)
+ F_{v,v}(r_0, r_1, r_2),
\\[1mm]
G_{p, p}(r_0, r_1, r_2)
&= F_{v,v}(r_0, r_1, r_2).
\end{align*}

As shown in \cite{IochMass17a}, the universal spectral functions $I_{\alpha,k}$ are continuous, so that all the spectral functions $F$ and $G$ are also continuous, as can be deduced from their original expressions in terms of functions $I_{\alpha,k}$ given in the list \eqref{eq-def-F-I} and the above relations between the $F$ and the $G$.

\begin{remark}
\label{rem-metrics}
The metric $g$ plays a double role here: it is the metric of the Riemannian manifold $(M, g)$ and it is the non-degenerate tensor which multiply $u$ in $P$. If one has to consider two operators $P_1$ and $P_2$ with tensors $g_1$ and $g_2$ on the same manifold $M$, it may be not natural to take $g_1$ or $g_2$ as the Riemannian metric on $M$. It is possible to consider a metric $h$ on $M$ different to the tensor $g$ associated to $P$. In that case, we have to replace $\dvolg$ in \eqref{eq-araP-dvol} by $\dvolh$ and $K(t,x,x,)$ is then a density for $h$, so that the true function is now $\abs{h}^{-1/2} K(t,x,x)$, and $\abs{h}^{-1/2}$ appears in \eqref{eq-a-T-rel} and \eqref{eq-R-Tkp} in place of $\abs{g}^{-1/2}$. Now, the computation of $T_{k,p}$ makes apparent the coefficient $g_d$ given by \eqref{eq-gd} where the metric $g$ comes from $P$. Finally, in \eqref{eq-R-uvw-functions} and \eqref{eq-R-upq-functions}, the two determinants do not simplify anymore, and one gets an extra factor $\abs{g}^{1/2} \abs{h}^{-1/2}$ in front of $\mathcal{R}_2$, which is now relative to $\varphi_h(a) \vc \int_M  \tr[a(x)] \, \dvolh(x)$.
\end{remark}

A change of connection as in Prop.~\ref{prop-change-connection-pq} does not change the value of $\mathcal{R}_2$. This induces the following relations between the spectral functions $G$.
\begin{proposition}
\label{prop-relations-G}
The spectral functions $G$ satisfy the relations:
\begin{align*}
G_{\hnabla p}(r_0, r_1)
&=
- \frac{r_0 G_{q}(r_0, r_1) + (r_0 - r_1) G_{\hnabla \hnabla u}(r_0, r_1)}{r_0 + r_1},
\\[1mm]
G_{\hnabla u, p}(r_0, r_1, r_2)
&=
- \frac{r_2 G_{q}(r_0, r_2) + (r_0 + 3 r_2) G_{\hnabla \hnabla u}(r_0, r_2) + (r_0 + r_2) (r_1 - r_2) G_{\hnabla u, \hnabla u}(r_0, r_1, r_2)}{(r_0 + r_2) (r_1 + r_2)},
\\[1mm]
G_{p, \hnabla u}(r_0, r_1, r_2)
&=
\frac{r_0 G_{q}(r_0, r_2) + (3 r_0 + r_2) G_{\hnabla \hnabla u}(r_0, r_2) + (r_0 + r_2) (r_1 - r_0) G_{\hnabla u, \hnabla u}(r_0, r_1, r_2)}{(r_0 + r_2) (r_1 + r_0)},
\\[1mm]
G_{p, p}(r_0, r_1, r_2)
&=
- \frac{r_1 G_{q}(r_0, r_2) - (r_0 - 2 r_1 + r_2) G_{\hnabla \hnabla u}(r_0, r_2) + (r_0 - r_1) (r_1 - r_2) G_{\hnabla u, \hnabla u}(r_0, r_1, r_2)}{(r_0 + r_1) (r_1 + r_2)}.
\end{align*}
\end{proposition}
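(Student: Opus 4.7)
The strategy is to exploit the fact that $\mathcal{R}_2$ is intrinsic to $P$: the expression \eqref{eq-R-upq-functions} must yield the same value when computed from $(u, p, q, \nabla)$ and from $(u, p', q', \nabla')$, with the two data related by Proposition~\ref{prop-change-connection-pq}. Taking $\nabla' = \nabla + \eta(\phi)$ with infinitesimal $\phi$, the first-order condition $\delta\mathcal{R}_2 = 0$ produces the four relations by matching coefficients of independent tensorial structures at each point of $M$.

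First I would record the first-order variations of the quantities entering \eqref{eq-R-upq-functions}. Interpreting the term $\hnmu u \phi_\nu$ in Proposition~\ref{prop-change-connection-pq} as $\hnmu(u\phi_\nu) = (\hnmu u)\phi_\nu + u\hnmu\phi_\nu$, one has
\begin{align*}
\delta p^\nu &= -g^{\mu\nu}\{u, \phi_\mu\}, &
\delta q &= -g^{\mu\nu}(\hnmu u)\phi_\nu - g^{\mu\nu} u\hnmu\phi_\nu - p^\mu\phi_\mu,\\
\delta(\hnmu u) &= [\phi_\mu, u], &
\delta(\hnmu p^\mu) &= \hnmu(\delta p^\mu) + [\phi_\mu, p^\mu],
\end{align*}
together with $\delta(\hnmu \hnnu u) = [\hnmu\phi_\nu, u] + [\phi_\nu, \hnmu u] + [\phi_\mu, \hnnu u]$. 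Plugging these into \eqref{eq-R-upq-functions}, inserting $\sum_{r_b} E_{r_b} = \bbbone$ where needed to split products, and using $E_{r_i} u = r_i E_{r_i}$ to reduce commutators such as $E_{r_0}[\phi_\mu, u] E_{r_1} = (r_1 - r_0) E_{r_0}\phi_\mu E_{r_1}$ to scalar spectral factors, one obtains $\delta\mathcal{R}_2$ as a sum over independent tensorial structures built from $\phi$, $\hnabla\phi$, $\hnabla u$, $p$ and the spectral projections $E_{r_i}$.

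Each independent structure must have vanishing coefficient, giving one relation per structure. Matching the coefficient of $g^{\mu\nu} E_{r_0}(\hnmu\phi_\nu) E_{r_1}$ collects the three contributions $-r_0 G_q(r_0, r_1)$ (from $\delta q \supset -g^{\mu\nu} u\hnmu\phi_\nu$), $(r_1 - r_0) G_{\hnabla\hnabla u}(r_0, r_1)$ (from $[\hnmu\phi_\nu, u]$ in $\delta(\hnmu\hnnu u)$), and $-(r_0 + r_1) G_{\hnabla p}(r_0, r_1)$ (from $-g^{\mu\nu}\{u, \hnmu\phi_\nu\}$ in $\delta(\hnmu p^\mu)$); their sum vanishing yields the first relation. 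Matching the coefficients of $g^{\mu\nu} E_{r_0}(\hnmu u) E_{r_1}\phi_\nu E_{r_2}$ and of $g^{\mu\nu} E_{r_0}\phi_\mu E_{r_1}(\hnnu u) E_{r_2}$, then substituting the relation just derived for $G_{\hnabla p}$, yields the relations for $G_{\hnabla u, p}$ and for $G_{p, \hnabla u}$, respectively. Finally, matching the coefficient of $g^{\mu\nu} E_{r_0}\phi_\mu E_{r_1} p_\nu E_{r_2}$ and substituting the three previously derived relations yields the relation for $G_{p, p}$.

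The main obstacle is combinatorial bookkeeping: each of the seven tensorial terms in \eqref{eq-R-upq-functions} produces several contributions under the variation---via the product rule and via the non-trivial $\delta$'s of $q$, $\hnmu\hnnu u$, and $\hnmu p^\mu$---which must be routed, through the inserted resolutions of identity, to the correct target structure. The characteristic denominators $(r_i + r_j)$ appearing in the proposition all arise from the anticommutator $\{u, \phi_\mu\}$ in $\delta p^\mu$, which becomes a factor $(r_i + r_j)$ once sandwiched between the spectral projections $E_{r_i}$ and $E_{r_j}$.
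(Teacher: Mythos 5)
Your proposal is correct and follows essentially the same route as the paper's proof: insert the change of connection of Proposition~\ref{prop-change-connection-pq} into \eqref{eq-R-upq-functions}, demand that all $\phi_\mu$-dependent terms cancel structure by structure, and solve the resulting linear system for $G_{\hnabla p}$, $G_{\hnabla u, p}$, $G_{p, \hnabla u}$, $G_{p, p}$ in terms of $G_q$, $G_{\hnabla \hnabla u}$, $G_{\hnabla u, \hnabla u}$. The only immaterial difference is that you work to first order in $\phi_\mu$, so you obtain the five linear relations (of which four suffice) but not the additional quadratic-in-$\phi$ relation that the paper also records and verifies as a consistency check.
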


\begin{proof}
Inserting the relations \eqref{eq-p'q'-pqphi} into \eqref{eq-R-upq-functions}, all the terms involving $\phi_\mu$ must vanish. This induces the following relations between the $G$ functions: 
\begin{align*}
r_0 G_{q}(r_0, r_1) + (r_0 - r_1) G_{\hnabla \hnabla u}(r_0, r_1) + (r_0 + r_1) G_{\hnabla p}(r_0, r_1)
&=0,
\\[1mm]
G_{\hnabla p}(r_0, r_2) - (r_0 - r_1) G_{\hnabla u, p}(r_0, r_1, r_2) - (r_0 + r_1) G_{p, p}(r_0, r_1, r_2)
&=0,
\\[1mm]
G_{q}(r_0, r_2) + G_{\hnabla p}(r_0, r_2) + (r_1 - r_2) G_{p, \hnabla u}(r_0, r_1, r_2) + (r_1 + r_2) G_{p, p}(r_0, r_1, r_2)
&=0,
\\[1mm]
2 G_{\hnabla \hnabla u}(r_0, r_2) - G_{\hnabla p}(r_0, r_2)
- (r_0 - r_1) G_{\hnabla u, \hnabla u}(r_0, r_1, r_2) - (r_0 + r_1) G_{p, \hnabla u}(r_0, r_1, r_2)
&=0,
\\[1mm]
\begin{multlined}[b]
G_{q}(r_0, r_2) + 2 G_{\hnabla \hnabla u}(r_0, r_2) + G_{\hnabla p}(r_0, r_2)
\\
+ (r_1 - r_2) G_{\hnabla u, \hnabla u}(r_0, r_1, r_2) + (r_1 + r_2) G_{\hnabla u, p}(r_0, r_1, r_2)
\end{multlined}
&=0,
\\[1mm]
\begin{multlined}[b]
r_0 G_{q}(r_0, r_2) + (r_0 - 2 r_1 + r_2) G_{\hnabla \hnabla u}(r_0, r_2) + (r_0 - r_2) G_{\hnabla p}(r_0, r_2) 
\\
+ (r_0 - r_1)(r_1 - r_2) G_{\hnabla u, \hnabla u}(r_0, r_1, r_2) + (r_0 + r_1)(r_1 - r_2) G_{p, \hnabla u}(r_0, r_1, r_2) 
\\
+ (r_0 - r_1)(r_1 + r_2) G_{\hnabla u, p}(r_0, r_1, r_2) + (r_0 + r_1)(r_1 + r_2) G_{p, p}(r_0, r_1, r_2)
\end{multlined}
&=0.
\end{align*}
One can check directly that these relations hold true. From them, one can solve $G_{\hnabla p}, G_{\hnabla u, p}, G_{p, \hnabla u}$, and $G_{p, p}$ in terms of $G_{q}, G_{\hnabla \hnabla u}$, and $G_{\hnabla u, \hnabla u}$. This gives the relations of the proposition.
\end{proof}

These relations show that the four spectral functions $G$ involved in terms with $p^\mu$ are deduced from the three spectral functions involving only $u$ and $q$. This result is not a surprise: from Corollary~\ref{cor-p=0} we know that we can start with $p^\mu = 0$, so that $\mathcal{R}_2$ is written in terms of the three functions $G_{q}, G_{\hnabla \hnabla u}, G_{\hnabla u, \hnabla u}$ only, and then we can change the connection in order to produce the most general expression for $\mathcal{R}_2$. In other words, among the seven spectral functions $G$, only three are fundamental.

\medskip
The spectral functions $G$ can be computed explicitly, and their expressions depends on the value of $m$. Let
\begin{align*}
Q_1(a, b, c)
&\vc
\frac{-3 a^3 + a^2 b - 6 a^2 c + 6 a b c + a c^2 + b c^2}{2 (a-b)^2 (a-c)^3},
\\
Q_2(a,b,c)
&\vc \frac{1}{2}\, [ \begin{multlined}[t]
\frac{2}{(a - b) (a - c)}
+ \frac{(a + b)(a + c) - 4 a^2}{(a - b)^2 (a - c)^2} \ln(a)
\\[1.5mm]
\hspace{0.3cm}+ \frac{a + b}{(b - a)^2 (b - c)} \ln(b)
+ \frac{a + c}{(c - a)^2 (c - b)} \ln(c)
\,],
\end{multlined}
\\
Q_3(a,b,c)
&\vc \begin{multlined}[t]
6 \sqrt{a b c } + a^{3/2} + 2 b^{3/2} + c^{3/2} + \sqrt{a c}\, ( \sqrt{a} + \sqrt{c}) 
\\
\hspace{0.8cm}+ 2 \big(
a (\sqrt{b} + \sqrt{c}) + 2 b (\sqrt{a} + \sqrt{c}) + c (\sqrt{a} + \sqrt{b})
\big),
\end{multlined}
\\
Q_4(a,b,c)
&\vc
\frac{a + b + c + 2 \sqrt{a b} + 2 \sqrt{a c} + \sqrt{b c}}{\sqrt{b c} (\sqrt{a} + \sqrt{b})^2 (\sqrt{a} + \sqrt{c})^2 (\sqrt{b} + \sqrt{c})}.
\end{align*}

\begin{corollary}[Case $d = 2$]
\label{cor-R-d=2}
In dimension two the spectral functions $G$ can be written in terms of log functions, and this leads to:
\begin{align*}
\mathcal{R}_2 
&=  \tfrac{1}{4\,\pi} \big[ 
\begin{aligned}[t]
&\tfrac{1}{6} R 
+ \frac{\ln(r_0) - \ln(r_1)}{r_0 -  r_1}
E_{r_0} q E_{r_1}
+ \frac{1}{r_0 -  r_1} ( 
1 - r_0 \frac{\ln(r_0) - \ln(r_1)}{r_0 -  r_1}
)
E_{r_0} (\hnabla_\mu p^\mu) E_{r_1}
\\[2mm]
& - \frac{1}{\left(r_0 - r_1\right)^2} ( 
r_0 + r_1 - 2 r_0 r_1 \frac{\ln(r_0) - \ln(r_1)}{r_0 -  r_1}
)
g^{\mu\nu} 
E_{r_0} (\hnabla_\mu \hnabla_\nu u) E_{r_1}
\\[2mm]
& + \big( \begin{aligned}[t]
& \frac{\left(r_0 + r_2\right) \left(r_0 - 2 r_1 + r_2\right)}{\left(r_0 - r_1\right) \left(r_0 - r_2\right)^2 \left(r_1 - r_2\right)}
- Q_1(r_0, r_1, r_2) \ln(r_0)
\\[1.5mm]
& - \frac{\left(r_0 + r_1\right) \left(r_1 + r_2\right)}{2 \left(r_0 - r_1\right)^2 \left(r_1 - r_2\right)^2}  \ln(r_1)
- Q_1(r_2, r_1, r_0) \ln(r_2)
\big)
g^{\mu\nu} 
E_{r_0} (\hnabla_\mu u) E_{r_1} (\hnabla_\nu u) E_{r_2}
\end{aligned}
\\[2mm]
& + 
Q_2(r_0, r_1, r_2) \,
E_{r_0} (\hnabla_\mu u) E_{r_1} p^\mu E_{r_2}
- Q_2(r_2, r_1, r_0) \,
E_{r_0} p^\mu E_{r_1} (\hnabla_\mu u) E_{r_2}
\\[2mm]
& + \frac{1}{2} (
\frac{\ln(r_0)}{(r_0 - r_1) (r_0 - r_2)} 
+ \frac{\ln(r_1)}{(r_1 - r_0) (r_1 - r_2)} 
+ \frac{\ln(r_2)}{(r_2 - r_0) (r_2 - r_1)} 
)
E_{r_0} p_\mu E_{r_1} p^\mu E_{r_2}
\big].
\end{aligned}
\end{align*}
\end{corollary}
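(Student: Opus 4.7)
The strategy is to specialize Theorem~\ref{thm-R-upq} to $d=2$, where a single universal function controls all the spectral coefficients:
\[
I_{1,1}(r_0, r_1) = \frac{\ln(r_0) - \ln(r_1)}{r_0 - r_1}, \qquad I_{1,1}(r_0, r_0) = 1/r_0.
\]
The overall prefactor $\frac{1}{2^d \pi^{d/2}}$ reduces to $\frac{1}{4\pi}$, and the scalar curvature contribution $\frac{1}{6} R\, r_0^{-d/2+1} E_{r_0}$ collapses to $\frac{1}{6} R \, \bbbone$ after summing over $r_0 \in \spec(u)$ (since $r_0^{0} = 1$ and $\sum_{r_0} E_{r_0} = \bbbone$), matching the first term in the stated expression.

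The next step is to substitute $d=2$ into the closed forms of the spectral functions $F$ given after Theorem~\ref{thm-R-uvw}, producing explicit expressions in $I_{1,1}$, and to feed these into the linear $G$--$F$ relations listed after Theorem~\ref{thm-R-upq}. This yields $G_q, G_{\hnabla p}, G_{\hnabla \hnabla u}$ directly; elementary algebra (clearing $(r_0 - r_1)^2$ denominators and collecting log terms) reproduces the two-variable coefficients in the corollary. For instance, $G_{\hnabla \hnabla u} = F_{\partial\partial u} + F_{\partial v}$ after substitution of $I_{1,1}(r_0,r_0)=1/r_0$ and the log form of $I_{1,1}(r_0,r_1)$ telescopes to the stated $\frac{1}{(r_0-r_1)^2}[r_0+r_1-2r_0 r_1\, I_{1,1}(r_0,r_1)]$ form.

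The three-variable spectral functions $G_{\hnabla u, \hnabla u}, G_{p, \hnabla u}, G_{\hnabla u, p}, G_{p, p}$ are each sums of several $F$'s. Expanding every $I_{1,1}(r_i, r_j)$ as a log difference and grouping the coefficients of $\ln(r_0), \ln(r_1), \ln(r_2)$ over a common denominator produces the rational functions $Q_1, Q_2$. This is the main obstacle: each coefficient is a combination of rational-log terms in three variables and a naive expansion produces many cancellations, so one must collect common denominators and log coefficients systematically, for instance using the computer algebra code provided in \cite{IochMass17b}. Useful cross-checks come from Proposition~\ref{prop-relations-G} (which already expresses the four $p$-dependent $G$'s in terms of the three fundamental ones $G_q, G_{\hnabla\hnabla u}, G_{\hnabla u,\hnabla u}$), from the continuity of $\mathcal{R}_2$ at coincident eigenvalues of $u$ (which must turn the apparent poles of $Q_1, Q_2$ into removable singularities), and from the swap symmetry $Q_1(r_0,r_1,r_2) \leftrightarrow Q_1(r_2, r_1, r_0)$ exhibited by the $\ln(r_0)$ and $\ln(r_2)$ coefficients in the $G_{\hnabla u, \hnabla u}$ contribution.
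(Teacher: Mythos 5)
Your proposal is correct and follows essentially the same route as the paper: specialize Theorem~\ref{thm-R-upq} to $d=2$, reduce every spectral function $G$ to the fundamental $I_{1,1}(r_0,r_1)=\frac{\ln(r_0)-\ln(r_1)}{r_0-r_1}$ via the $F$--$I$ expressions and the $G$--$F$ relations, and collect the log coefficients (the paper delegates exactly this evaluation to the known $d=2$ form of the universal functions and the ancillary notebook). Your sample computation of $G_{\hnabla\hnabla u}$ and your cross-checks via Proposition~\ref{prop-relations-G} and continuity are consistent with what the paper does.
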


\bigskip
When $d =2m  \geq 4$ is even, all the functions involved are Laurent polynomials. This is a direct consequence of \cite[Prop.~3.5]{IochMass17a}:
\begin{align*}
G_{q}(r_0, r_1)
&=
\sum_{0 \leq \ell \leq m-2}  \tfrac{1}{m-1} \, r_0^{\ell+1-m} r_1^{-\ell-1},
\\[1mm]
G_{\hnabla \hnabla u}(r_0, r_1)
&=
- \sum_{0 \leq \ell \leq m-2} \tfrac{(m-\ell-1)(\ell+1)}{m(m-1)} \, r_0^{\ell+1-m} r_1^{-\ell-1} ,
\\[1mm]
G_{\hnabla p}(r_0, r_1)
&=
- \sum_{0 \leq \ell \leq m-2} \tfrac{m-\ell-1}{m(m-1)} \, r_0^{\ell+1-m} r_1^{-\ell-1},
\\[1mm]
G_{\hnabla u, \hnabla u}(r_0, r_1, r_2)
&=
- \sum_{0 \leq \ell \leq k \leq m-2} \tfrac{(2\ell+1)(2k-2m+3)}{2m(m-1)} \, r_0^{k+1-m} r_1^{\ell-k-1} r_2^{-\ell-1} ,
\\[1mm]
G_{p, \hnabla u}(r_0, r_1, r_2)
&=
- \sum_{0 \leq \ell \leq k \leq m-2} \tfrac{2\ell+1}{2m(m-1)} \, r_0^{k+1-m} r_1^{\ell-k-1} r_2^{-\ell-1},
\\[1mm]
G_{\hnabla u, p}(r_0, r_1, r_2)
&=
- \sum_{0 \leq \ell \leq k \leq m-2} \tfrac{2k-2m+3}{2m(m-1)} \, r_0^{k+1-m} r_1^{\ell-k-1} r_2^{-\ell-1},
\\[1mm]
G_{p, p}(r_0, r_1, r_2)
&=
- \sum_{0 \leq \ell \leq k \leq m-2} \tfrac{1}{2m(m-1)} \, r_0^{k+1-m} r_1^{\ell-k-1} r_2^{-\ell-1}.
\end{align*}

This implies the following expressions for $\mathcal{R}_2$:
\begin{corollary}[Case $d = 2m$ even and $d \geq 4$]
\label{cor-R-d>2-even}
Using the expressions of the spectral functions $G$ as Laurent polynomials, one has
\begin{align*}
\mathcal{R}_2 
&= \tfrac{1}{2^{2m}\pi^{m}} \big[
\begin{aligned}[t]
& \tfrac{1}{6} R \, u^{-m+1}
 + \sum_{0 \leq \ell \leq m-2}  \tfrac{1}{m-1} \, u^{\ell+1-m} q u^{-\ell-1}
 - \sum_{0 \leq \ell \leq m-2} \tfrac{m-\ell-1}{m(m-1)} \, u^{\ell+1-m} (\hnabla_\mu p^\mu) u^{-\ell-1} 
\\[1mm]
& - \sum_{0 \leq \ell \leq m-2} \tfrac{(m-\ell-1)(\ell+1)}{m(m-1)} \, g^{\mu\nu} u^{\ell+1-m} (\hnabla_\mu \hnabla_\nu u) u^{-\ell-1} 
\\[1mm]
& - \sum_{0 \leq \ell \leq k \leq m-2} \tfrac{(2\ell+1)(2k-2m+3)}{2m(m-1)} \, g^{\mu\nu} u^{k+1-m} (\hnabla_\mu u) u^{\ell-k-1} (\hnabla_\nu u) u^{-\ell-1}  
\\[1mm]
& - \sum_{0 \leq \ell \leq k \leq m-2} \tfrac{2k-2m+3}{2m(m-1)} \, u^{k+1-m} (\hnabla_\mu u) u^{\ell-k-1} p^\mu u^{-\ell-1}
\\[1mm]
& - \sum_{0 \leq \ell \leq k \leq m-2} \tfrac{2\ell+1}{2m(m-1)} \, u^{k+1-m} p^\mu u^{\ell-k-1} (\hnabla_\mu u) u^{-\ell-1}
\\[1mm]
& - \sum_{0 \leq \ell \leq k \leq m-2} \tfrac{1}{2m(m-1)} \, u^{k+1-m} p_\mu u^{\ell-k-1} p^\mu u^{-\ell-1} 
\big]
\end{aligned}
\\[1mm]
&= \tfrac{1}{2^{2m}\pi^{m}} 
\begin{aligned}[t]
&\big[ \tfrac{1}{6} R \, u^{-m+1} 
 + \sum_{0 \leq \ell \leq m-2}  \tfrac{1}{m-1} \, u^{\ell+1-m} q u^{-\ell-1}
\\[1mm]
& 
- \!\!\!\! \sum_{0 \leq \ell \leq m-2} \tfrac{(m-\ell-1)}{m(m-1)} \, g^{\mu\nu}
u^{\ell+1-m} \big[ \hnabla_\mu \left( (\ell+1) \hnabla_\nu u + p_\nu \right) \big] u^{-\ell-1}
\\[1mm]
& - \!\!\!\!\!\! \sum_{0 \leq \ell \leq k \leq m-2} \!\!\!\! \tfrac{1}{2m(m-1)}  \,   
 g^{\mu\nu} u^{k+1-m} \left[ \left( 2k-2m+3 \right) \! \hnabla_\mu u + p_\mu \right] u^{\ell-k-1} \left[ \left( 2\ell+1 \right) \! \hnabla_\nu u + p_\nu \right] u^{-\ell-1}
\big].
\end{aligned}
\end{align*}
\end{corollary}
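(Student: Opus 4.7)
The plan is to start from Theorem~\ref{thm-R-upq} and substitute the Laurent polynomial expressions for the spectral functions $G$ that are listed just above the corollary. The key observation is that once a spectral function is a Laurent polynomial in $(r_0,r_1)$ or $(r_0,r_1,r_2)$, the sandwich structure with the spectral projections $E_{r_i}$ collapses via the identities
\begin{align*}
\sum_{r_0,r_1\in\spec(u)} r_0^a r_1^b\, E_{r_0} X E_{r_1} &= u^a X u^b,\\
\sum_{r_0,r_1,r_2\in\spec(u)} r_0^a r_1^b r_2^c\, E_{r_0} X E_{r_1} Y E_{r_2} &= u^a X u^b Y u^c,
\end{align*}
which follow from $u = \sum_{r_0} r_0 E_{r_0}$, $\sum_{r_0} E_{r_0}=\bbbone$, and orthogonality $E_{r_0}E_{r_1}=\delta_{r_0,r_1}E_{r_0}$. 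In particular the leading term $\tfrac{1}{6}R\, r_0^{-d/2+1} E_{r_0}$ collapses to $\tfrac{1}{6}R\, u^{-m+1}$.

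First I would justify the Laurent expansions for the $G$'s. Theorem~\ref{thm-R-upq} expresses each $G$ as a combination of the $F$'s, and the $F$'s are themselves polynomial combinations (with rational coefficients in the $r_i$) of the single universal function $I_{d/2,1}$; by \cite[Prop.~3.5]{IochMass17a}, when $d=2m$ with $m\geq 2$ the function $I_{m,1}(r_0,r_1)$ is a Laurent polynomial, and this propagates to all the $G$'s. A short check reduces to the cases where $G$ depends on $(r_0,r_1)$ (genuine divided differences of order one in $I_{m,1}$) or on $(r_0,r_1,r_2)$ (divided differences of order two), and one reads off the displayed coefficients.

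Next, I would substitute these Laurent expansions into \eqref{eq-R-upq-functions} and apply the collapse identities term by term. This produces immediately the first equality of the corollary: seven sums in $(\ell,k)$ corresponding to $q$, $\hnabla\hnabla u$, $\hnabla p$, $(\hnabla u,\hnabla u)$, $(p,\hnabla u)$, $(\hnabla u,p)$, and $(p,p)$, with the exact coefficients and ranges given.

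For the second equality I would simply regroup terms. The $\hnabla p$ and $\hnabla\hnabla u$ contributions share the common prefactor $\tfrac{m-\ell-1}{m(m-1)}$ once $(\ell+1)$ is pulled inside $\hnabla_\mu$; using $\hnabla_\mu p^\mu = g^{\mu\nu}\hnabla_\mu p_\nu$ and linearity of $\hnabla_\mu$, they combine into the single factored term $-g^{\mu\nu} u^{\ell+1-m}[\hnabla_\mu((\ell+1)\hnabla_\nu u + p_\nu)] u^{-\ell-1}$. Similarly, the four summands indexed by $(\ell,k)$ involving $\hnabla u$ and $p$ factor as
\begin{align*}
[(2k-2m+3)\hnabla_\mu u + p_\mu]\,[(2\ell+1)\hnabla_\nu u + p_\nu],
\end{align*}
with overall prefactor $-\tfrac{1}{2m(m-1)}$, since expanding this bracket reproduces exactly the four coefficients $(2k-2m+3)(2\ell+1)$, $(2k-2m+3)$, $(2\ell+1)$, and $1$ attached to $(\hnabla u)(\hnabla u)$, $(\hnabla u)p$, $p(\hnabla u)$, and $pp$.

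The only delicate point, and the one I would double-check carefully, is the bookkeeping of the summation range $0\leq \ell \leq k \leq m-2$ in the three-index terms: it must be the same for all four summands before one is allowed to combine them inside a single bracket. This is the case by inspection of the listed Laurent expansions, so the factorization is legitimate and the second equality follows. No further content is needed, since the $p^\mu=0$ consistency (cf.\ Prop.~\ref{prop-relations-G}) is already built into the coefficients of the $G$'s.
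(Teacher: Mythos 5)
Your proposal is correct and follows essentially the same route as the paper: substitute the Laurent polynomial expressions of the spectral functions $G$ (which the paper derives from \cite[Prop.~3.5]{IochMass17a}) into Theorem~\ref{thm-R-upq}, collapse the spectral projections via $\sum_{r_i} r_i^{a} E_{r_i} = u^{a}$, and then factor the $\hnabla p$/$\hnabla\hnabla u$ and the four quadratic terms over the common index ranges to obtain the second equality. Your check that all four three-index sums share the range $0 \leq \ell \leq k \leq m-2$ before factoring is exactly the right point to verify, and it holds.
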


\bigskip
\begin{corollary}[Case $d=4$]
\label{cor-d=4}
In dimension four these expressions simplify further to:
\begin{align*}
\mathcal{R}_2
&= \tfrac{1}{16\pi^{2}} [
\begin{aligned}[t]
& \tfrac{1}{6} R \, u^{-1}
 + u^{-1} q u^{-1}
 - \tfrac{1}{2} \, u^{-1} (\hnabla_\mu p^\mu) u^{-1} 
- \tfrac{1}{2} \, g^{\mu\nu} u^{-1} (\hnabla_\mu \hnabla_\nu u) u^{-1} 
\\[1mm]
& + \tfrac{1}{4} \, g^{\mu\nu} u^{-1} (\hnabla_\mu u) u^{-1} (\hnabla_\nu u) u^{-1}  
+ \tfrac{1}{4} \, u^{-1} (\hnabla_\mu u) u^{-1} p^\mu u^{-1}
- \tfrac{1}{4} \, u^{-1} p^\mu u^{-1} (\hnabla_\mu u) u^{-1}
\\[1mm]
& - \tfrac{1}{4} \, u^{-1} p_\mu u^{-1} p^\mu u^{-1} 
]
\end{aligned}
\\[1mm]
&= \tfrac{1}{16\pi^{2}} \big[
\begin{aligned}[t]
& \tfrac{1}{6} R \, u^{-1} 
 \!+ \! u^{-1} q u^{-1}
\!-\! \tfrac{1}{2} \, g^{\mu\nu}
u^{-1} [ \hnabla_\mu ( \hnabla_\nu u + p_\nu ) ] u^{-1}
\!+\! \tfrac{1}{4}  \,   
 g^{\mu\nu} u^{-1} [ \hnabla_\mu u - p_\mu ] u^{-1} [ \hnabla_\nu u + p_\nu ] u^{-1}
\big].
\end{aligned}
\end{align*}
\end{corollary}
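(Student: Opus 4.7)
The plan is to obtain Corollary~\ref{cor-d=4} as a direct specialization of Corollary~\ref{cor-R-d>2-even} with $m=2$, followed by elementary algebraic regrouping to get the second, compact form.

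First, I would set $m=2$ in the first displayed formula of Corollary~\ref{cor-R-d>2-even}. Each of the sums then collapses dramatically: the range $0 \leq \ell \leq m-2$ reduces to the single value $\ell = 0$, while $0 \leq \ell \leq k \leq m-2$ leaves only $\ell = k = 0$. Substituting these unique values and evaluating the combinatorial prefactors yields the coefficients $\tfrac{1}{m-1}=1$ for the $q$-term, $\tfrac{m-\ell-1}{m(m-1)}=\tfrac{1}{2}$ for the $(\hnmu p^\mu)$-term, $\tfrac{(m-\ell-1)(\ell+1)}{m(m-1)}=\tfrac{1}{2}$ for the $(\hnmu\hnnu u)$-term, $\tfrac{(2\ell+1)(2k-2m+3)}{2m(m-1)}=-\tfrac{1}{4}$ for the $(\hnmu u)(\hnnu u)$-term, $\tfrac{2k-2m+3}{2m(m-1)}=-\tfrac{1}{4}$ for the $(\hnmu u) p^\mu$-term, $\tfrac{2\ell+1}{2m(m-1)}=\tfrac{1}{4}$ for the $p^\mu(\hnmu u)$-term, and $\tfrac{1}{2m(m-1)}=\tfrac{1}{4}$ for the $p_\mu p^\mu$-term. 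Together with the overall prefactor $\tfrac{1}{2^{2m}\pi^m}=\tfrac{1}{16\pi^2}$ and the curvature term $\tfrac{1}{6}R\,u^{-1}$, this produces the first display of Corollary~\ref{cor-d=4} line by line.

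For the compact form, I would regroup the terms involving the second-order derivatives using the fact that $\hnmu g^{\alpha\beta}=0$, so that $g^{\mu\nu}\hnmu p_\nu = \hnmu p^\mu$. This allows one to write
\[
-\tfrac{1}{2} g^{\mu\nu} u^{-1}(\hnmu\hnnu u)u^{-1} - \tfrac{1}{2} u^{-1}(\hnmu p^\mu)u^{-1}
= -\tfrac{1}{2}\, g^{\mu\nu}\, u^{-1}\big[\hnmu(\hnnu u + p_\nu)\big] u^{-1}.
\]
For the four quadratic terms, I would verify by expanding that
\[
\tfrac{1}{4}\, g^{\mu\nu}\, u^{-1}(\hnmu u - p_\mu)\, u^{-1}(\hnnu u + p_\nu)\, u^{-1}
\]
reproduces exactly $\tfrac{1}{4}g^{\mu\nu}u^{-1}(\hnmu u)u^{-1}(\hnnu u)u^{-1} + \tfrac{1}{4}u^{-1}(\hnmu u)u^{-1}p^\mu u^{-1} - \tfrac{1}{4}u^{-1}p^\mu u^{-1}(\hnmu u)u^{-1} - \tfrac{1}{4}u^{-1}p_\mu u^{-1}p^\mu u^{-1}$, the cross terms landing with the correct signs after raising indices with $g^{\mu\nu}$.

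Since the statement is a pure specialization followed by bookkeeping, there is no genuine obstacle; the only point requiring care is tracking the signs of the cross terms when factoring the quadratic expression, which I would double-check by expanding both sides.
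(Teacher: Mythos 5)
Your proposal is correct and follows exactly the route the paper intends: Corollary~\ref{cor-d=4} is the specialization $m=2$ of Corollary~\ref{cor-R-d>2-even}, where every sum collapses to $\ell=k=0$, and the compact second form is obtained by the factorization of the quadratic terms you describe (using $\hnmu g^{\alpha\beta}=0$ to write $\hnmu p^\mu = g^{\mu\nu}\hnmu p_\nu$). All the evaluated coefficients and signs check out, so nothing further is needed.
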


\begin{corollary}[Case $d=3$]
\label{cor-R-d=3}
In dimension three the spectral functions $G$ can be written in terms of square roots of the $r_i$ (see \cite[Prop.~3.4]{IochMass17a}), and this leads to:
\begin{align*}
\mathcal{R}_2 
&= \tfrac{1}{8\pi^{3/2}} \big[
\begin{aligned}[t]
&\tfrac{1}{6} R 
+ \frac{2}{\sqrt{r_0 r_1} (\sqrt{r_0} + \sqrt{r_1})}
E_{r_0} q E_{r_1}
- \tfrac{2}{3} \frac{2 \sqrt{r_0} + \sqrt{r_1}}{\sqrt{r_0 r_1} (\sqrt{r_0} + \sqrt{r_1})^2} \,
E_{r_0} (\hnabla_\mu p^\mu) E_{r_1}
\\[2mm]
& - \tfrac{2}{3} \frac{\sqrt{r_0 r_1} + (\sqrt{r_0} + \sqrt{r_1})^2}{\sqrt{r_0 r_1} (\sqrt{r_0} + \sqrt{r_1})^3} \,
g^{\mu\nu} 
E_{r_0} (\hnabla_\mu \hnabla_\nu u) E_{r_1}
\\[2mm]
& + \tfrac{2}{3} \frac{Q_3(r_0, r_1, r_2)}{\sqrt{r_1} (\sqrt{r_0} + \sqrt{r_1})^2  (\sqrt{r_0} + \sqrt{r_2})^3 (\sqrt{r_1} + \sqrt{r_2})^2} \,
g^{\mu\nu} 
E_{r_0} (\hnabla_\mu u) E_{r_1} (\hnabla_\nu u) E_{r_2}
\\[2mm]
& + \tfrac{2}{3} Q_4(r_0, r_1, r_2) \, 
E_{r_0} (\hnabla_\mu u) E_{r_1} p^\mu E_{r_2}
- \tfrac{2}{3} Q_4(r_2, r_1, r_0) \, 
E_{r_0} p_\mu E_{r_1} p^\mu E_{r_2}
\\[2mm]
& - \tfrac{2}{3} \frac{\sqrt{r_0} + \sqrt{r_1} + \sqrt{r_2}}{\sqrt{r_0 r_1 r_2} (\sqrt{r_0} + \sqrt{r_1}) (\sqrt{r_0} + \sqrt{r_2}) (\sqrt{r_1} + \sqrt{r_2})} \, 
E_{r_0} p_\mu E_{r_1} p^\mu E_{r_2}
\big].
\end{aligned}
\end{align*}
\end{corollary}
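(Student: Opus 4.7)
The plan is to specialize Theorem~\ref{thm-R-upq} to $d = 3$, computing each spectral function $G$ explicitly in terms of square roots of the $r_i$, and then matching the result against the compact expressions built from $Q_3$ and $Q_4$.

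First I compute the fundamental universal function $I_{3/2,1}$. Applying the recursive formula \eqref{eq-I-recursive} with $\alpha = 3/2$, $k = 1$, and using $I_{1/2,0}(r) = r^{-1/2}$, I find
\begin{align*}
I_{3/2,1}(r_0, r_1) = \frac{2\,(r_1^{-1/2} - r_0^{-1/2})}{r_0 - r_1} = \frac{2}{\sqrt{r_0 r_1}\,(\sqrt{r_0} + \sqrt{r_1})},
\end{align*}
after rationalizing via $r_0 - r_1 = (\sqrt{r_0} - \sqrt{r_1})(\sqrt{r_0} + \sqrt{r_1})$, in agreement with \cite[Prop.~3.4]{IochMass17a}; the coincidence limit is $I_{3/2,1}(r_0, r_0) = r_0^{-3/2}$. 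These two values are then substituted into the formulas for $F_w, F_{\partial v}, F_{\partial\partial u}, F_{v,v}, F_{\partial u, v}, F_{v, \partial u}, F_{\partial u, \partial u}$ listed under Theorem~\ref{thm-R-uvw}, and the $F$'s are combined into $G$'s via the linear relations immediately below Theorem~\ref{thm-R-upq}; the scalar curvature piece $\tfrac{1}{6} R$ is inherited directly.

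For the two-variable functions $G_q$, $G_{\hnabla p}$, $G_{\hnabla \hnabla u}$ the denominators $r_0 - r_1$ and $(r_0 - r_1)^2$ factor over $\sqrt{r_0} \pm \sqrt{r_1}$, and a short rationalization produces the three coefficients stated. The main obstacle is the three-variable functions $G_{\hnabla u, \hnabla u}, G_{\hnabla u, p}, G_{p, \hnabla u}, G_{p, p}$: each arises as a sum of several $F$ contributions with denominators of the form $(r_i - r_j)^a (r_i - r_k)^b$ together with coincidence terms $r_0^{-3/2}$. Putting these over the common denominator $\sqrt{r_0 r_1 r_2}\,(\sqrt{r_0} + \sqrt{r_1})^{a'}(\sqrt{r_0} + \sqrt{r_2})^{b'}(\sqrt{r_1} + \sqrt{r_2})^{c'}$ and rationalizing each $\sqrt{r_i} - \sqrt{r_j}$ factor produces bulky expressions in the square roots that must telescope to the symmetric polynomials $Q_3$ and $Q_4$. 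I expect this final algebraic simplification, rather than any conceptual step, to be the main obstacle, and in practice it is efficient to delegate the expansion and cancellation to the ancillary \texttt{Mathematica} notebook \cite{IochMass17b}.
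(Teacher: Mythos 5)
Your proposal is correct and follows essentially the same route as the paper: the paper's proof of this corollary is simply the specialization of Theorem~\ref{thm-R-upq} to $d=3$, using the square-root expression of $I_{3/2,1}$ (from \cite[Prop.~3.4]{IochMass17a}, which your recursion argument with $I_{1/2,0}(r)=r^{-1/2}$ correctly reproduces), substituting into the $F$'s, combining into the $G$'s, and delegating the final rationalization to the ancillary notebook \cite{IochMass17b}. Your explicit check of $I_{3/2,1}(r_0,r_1)=2/\bigl(\sqrt{r_0 r_1}\,(\sqrt{r_0}+\sqrt{r_1})\bigr)$ and its coincidence limit $r_0^{-3/2}$ is consistent with the stated coefficients of the $E_{r_0} q E_{r_1}$ and $E_{r_0}(\hnabla_\mu p^\mu)E_{r_1}$ terms.
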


To conclude this list of results given at various dimensions, we see that we have explicit expressions of $\mathcal{R}_2$ for $d$ even, and moreover simple generic expressions for $d = 2m$, $d \geq 4$, while for $d$ odd it is difficult to propose a generic expression.

\section{Some direct applications}
\label{sec-direct-applications}

\subsection{\texorpdfstring{The case $a = \bbbone$}{The case a=1}}

When $a = \bbbone$, a cyclic operation can be performed under the trace:
\begin{align*}
f(r_0, r_1, \dots, r_k) \tr( E_{r_0} B_1 E_{r_1} B_2 \cdots B_k E_{r_k})
&=
f(r_0, r_1, \dots, r_k) \tr( E_{r_k} E_{r_0} B_1 E_{r_1} B_2 \cdots B_k )
\\
& = \delta_{r_0, r_k} f(r_0, r_1, \dots, r_k) \tr( E_{r_k} E_{r_0} B_1 E_{r_1} B_2 \cdots B_k )
\\
& = f(r_0, r_1, \dots, r_0) \tr( E_{r_0} B_1 E_{r_1} B_2 \cdots E_{r_{k-1}} B_k ).
\end{align*} 
This implies that in all the spectral functions one can put $r_k = r_0$ (remember that all the spectral functions are continuous, so that $r_k \to r_0$ is well-defined). 

In \cite[Theorem~4.3]{IochMass17a}, $a_2(\bbbone, P)$ has been computed for $d=2m$ even, $m \geq 1$. Let us first rewrite this result in terms of spectral functions:
\begin{align*}
&\alpha \, r_0^{-m+1} \, E_0
+ r_0^{-m} \, E_0 w
+ \tfrac{m-2}{6}[ \tfrac{1}{2} \alpha^\mu - \beta^\mu ] \, r_0^{-m} \, E_0 (\pmu u)
- \tfrac{m-2}{6} g^{\mu\nu} \, r_0^{-m} \, E_0 (\pmu \pnu u)
\\
& + \tfrac{1}{2} \beta_\mu \, r_0^{-m} \, E_0 v^\mu
- \tfrac{1}{2} \, r_0^{-m} \, E_0 (\pmu v^\mu)
- \tfrac{1}{4m} g_{\mu\nu} \sum_{\ell=0}^{m-1} r_0^{-\ell-1} r_1^{\ell-m} \, E_0 v^\mu E_1 v^\nu
\\
& + \tfrac{1}{2m} \sum_{\ell=0}^{m-1} (m - 2\ell) \, r_0^{-\ell-1}  r_1^{\ell-m} \, E_0 v^\mu E_1 (\pmu u)
+ g^{\mu\nu} \sum_{\ell=0}^{m-1} [ \tfrac{m - 2}{6} - \tfrac{\ell (m - \ell - 1)}{2 m} ] \, r_0^{-\ell-1}  r_1^{\ell-m} \, E_0 (\pmu u) E_1 (\pnu u).
\end{align*}
Notice that:
\begin{align*}
g_{\mu\nu} \!\sum_{\ell=0}^{m-1} r_0^{-\ell-1} r_1^{\ell-m} \, \tr(E_0 v^\mu E_1 v^\nu)
&=
g_{\mu\nu} \!\sum_{\ell=0}^{m-1} r_0^{-\ell-1} r_1^{\ell-m} \, \tr(E_1 v^\nu E_0 v^\mu)
= g_{\mu\nu}\! \sum_{\ell=0}^{m-1} r_1^{-\ell-1} r_0^{\ell-m} \, \tr(E_1 v^\mu E_0 v^\nu )
\end{align*}
if we change $\ell$ to $m - \ell - 1$ in the summation and we use the symmetry of $g_{\mu\nu}$. Then, to show that the spectral functions $F$ reduce to the ones above, one has to use the symmetry $r_0 \leftrightarrow r_1$ for some terms. A direct computation gives
\begin{align*}
F_{w}(r_0, r_0)
&= r_0^{-m},
\\
F_{\partial u}^\mu(r_0, r_0)
&= \tfrac{m-2}{6} [ \tfrac{1}{2} \alpha^\mu - \beta^\mu ] \, r_0^{-m},
\\
F_{\partial \partial u}(r_0, r_0)
&= - \tfrac{m-2}{6} \, r_0^{-m},
\\
F_{v, \mu}(r_0, r_0)
&= \tfrac{1}{2} \beta_\mu \, r_0^{-m},
\\
F_{\partial v}(r_0, r_0)
&= - \tfrac{1}{2} \, r_0^{-m},
\\
\tfrac{1}{2} [ F_{v,v}(r_0, r_1, r_0) + F_{v,v}(r_1, r_0, r_1) ]
&= - \tfrac{1}{4m} \sum_{\ell=0}^{m-1} r_0^{-\ell-1} r_1^{\ell-m},
\\
\tfrac{1}{2} [ F_{\partial u, \partial u}(r_0, r_1, r_0) + F_{\partial u, \partial u}(r_1, r_0, r_1) ]
&= \sum_{\ell=0}^{m-1} [ \tfrac{m - 2}{6} - \tfrac{\ell (m - \ell - 1)}{2 m} ] \, r_0^{-\ell-1}  r_1^{\ell-m},
\\
F_{v,\partial u}(r_0, r_1, r_0) + F_{\partial u,v}(r_1, r_0, r_1)
&= \tfrac{1}{2m} \sum_{\ell=0}^{m-1} (m - 2\ell) \, r_0^{-\ell-1}  r_1^{\ell-m}.
\end{align*}
In the last expression, under the trace we have $\tr(E_0 (\pmu u) E_1 v^\mu) = \tr(E_1 v^\mu E_0 (\pmu u))$, and we need to sum the two functions $F_{v,\partial u}$ and $F_{\partial u,v}$ with their correct arguments.

These relations show that \eqref{eq-R-uvw-functions} reproduces \cite[eq.~4.15]{IochMass17a} when $a = \bbbone$.

\subsection{\texorpdfstring{Minimal Laplace type operators: $u = \bbbone$}{Minimal Laplace type operators: u = 1}}

Starting from \eqref{eq-def-P-upq} when $u = \bbbone$ one gets
\begin{align*}
P
&= - g^{\mu\nu} \nmu \nnu - ( p^\nu - [\tfrac{1}{2} \alpha^\nu - \beta^\nu ] ) \,\nnu - q
\end{align*}
and two simplifications occur in \eqref{eq-R-upq-functions}: all derivatives of $u$ vanish, and the spectrum of $u$ reduces to $\spec(u) = \{1\}$, so that all spectral functions are taken at $r_i = 1$ and $E_{r_i} = \bbbone$. The result is then
\begin{align*}
\mathcal{R}_2
&= \tfrac{1}{2^{d}\,\pi^{d/2}}\, [
\tfrac{1}{6} R
 + G_{q}(1, 1) \, 
q
+ G_{\hnabla p}(1, 1) \, 
\hnmu p^\mu
+ G_{p, p}(1, 1, 1) \, 
p^\mu  p_\mu 
\,].
\end{align*}
A direct computation gives $G_{q}(1, 1) = 1$, $G_{\hnabla p}(1, 1) = -\tfrac{1}{2}$, and $G_{p, p}(1, 1, 1) = -\tfrac{1}{4}$, so that
\begin{align}
\mathcal{R}_2
&= \tfrac{1}{2^{d}\,\pi^{d/2}} [
\tfrac{1}{6} R
 + q
-\tfrac{1}{2} \hnmu p^\mu
-\tfrac{1}{4} p^\mu  p_\mu 
].
\label{eq-u=1-pq}
\end{align}

As in Prop.~\ref{prop-change-connection-pq}, we can change $\hnmu$ to $\hnmu' \vc \hnmu +  \phi_\mu$ and solve $\phi_\mu$ in order to get $p^{\mu\,\prime} = 0$. From \eqref{eq-P-upq-hatnabla} with $u =  \bbbone$, one has
\begin{align*}
g^{\mu\nu} \hnmu' \hnnu' + q' &= g^{\mu\nu} (\hnmu +  \phi_\mu) \,(\hnnu +  \phi_\nu) + q' = g^{\mu\nu} \hnmu \hnnu + g^{\mu\nu} (\hnmu \phi_\nu) +  2 g^{\mu\nu} \phi_\nu \hnmu + g^{\mu\nu} \phi_\mu \phi_\nu + q' \\&\cv g^{\mu\nu}  \hnmu \hnnu  + p^\mu \hnmu +q
\end{align*}
with $p^\mu = 2 g^{\mu\nu} \phi_\nu$ and $q = q' + g^{\mu\nu} (\hnmu \phi_\nu) + g^{\mu\nu} \phi_\mu \phi_\nu$. This is solved for $\phi_\mu = \tfrac{1}{2} g_{\mu\nu} p^\nu$ and implies $q' = q - \tfrac{1}{2} \hnmu p^\mu -\tfrac{1}{4} p^\mu  p_\mu$. Injected into \eqref{eq-u=1-pq}, this gives $\mathcal{R}_2 =  \tfrac{1}{2^{d}\,\pi^{d/2}} [ \tfrac{1}{6} R + q' ]$ as in \cite[Theorem~3.3.1]{Gilk03a}.

\subsection{Conformal like transformed Laplacian}
\label{subsec-conf-like-lap}

Let us consider a positive invertible element $k \in \Gamma(\End(V))$, a covariant derivative $\nmu$ on $V$ and $\Delta = -g^{\mu\nu} \hnmu \hnnu$ be its associated Laplacian. We consider the operator
\begin{equation*}
P \vc k \Delta k = 
- g^{\mu\nu} k^2 \hnmu \hnnu - 2 g^{\mu\nu} k (\hnmu k) \, \hnnu + k (\Delta k).
\end{equation*}
It can be written as in \eqref{eq-P-upq-hatnabla} with
\begin{align*}
u = k^2,
\qquad
p^\nu = g^{\mu\nu} k (\hnmu k) - g^{\mu\nu} (\hnmu k) k ,
\qquad
q = -k (\Delta k).
\end{align*}
Application of Theorem~\ref{thm-R-upq} gives
\begin{align*}
\mathcal{R}_2^{k\Delta k}
&= \tfrac{1}{2^{d}\,\pi^{d/2}} \big[
\begin{multlined}[t]
\tfrac{1}{6} R \, r_0^{-d/2+1} \, E_{r_0}
\\
+
F_{\Delta k}^{k\Delta k}(r_0, r_1) \, E_{r_0} (\Delta k) E_{r_1}
+
F_{\nabla k \nabla k}^{k\Delta k}(r_0, r_1, r_2) g^{\mu\nu} \, E_{r_0} (\nmu k) E_{r_1} (\nnu k) E_{r_2}
\big]
\end{multlined}
\end{align*}
with
\begin{align}
F_{\Delta k}^{k\Delta k}(r_0, r_1)
&=
-\sqrt{r_0} \, G_{q}(r_0, r_1)
- (\sqrt{r_0} + \sqrt{r_1}) \, G_{\hnabla \hnabla u}(r_0, r_1)
- (\sqrt{r_0} - \sqrt{r_1}) \, G_{\hnabla p}(r_0, r_1),
\label{eq-conf-like-FDelta}
\\
F_{\nabla k \nabla k}^{k\Delta k}(r_0, r_1, r_2)
&= \begin{aligned}[t]
& 2 G_{\hnabla \hnabla u}(r_0, r_2) 
\\
& + (\sqrt{r_0} + \sqrt{r_1})(\sqrt{r_1} + \sqrt{r_2}) \, G_{\hnabla u, \hnabla u}(r_0, r_1, r_2)
\\
& +  (\sqrt{r_0} - \sqrt{r_1})(\sqrt{r_1} + \sqrt{r_2}) \, G_{p, \hnabla u}(r_0, r_1, r_2)
\\
& + (\sqrt{r_0} + \sqrt{r_1})(\sqrt{r_1} - \sqrt{r_2}) \, G_{\hnabla u, p}(r_0, r_1, r_2)
\\
& + (\sqrt{r_0} - \sqrt{r_1})(\sqrt{r_1} - \sqrt{r_2}) \, G_{p, p}(r_0, r_1, r_2).
\end{aligned}
\label{eq-conf-like-Fdkdk}
\end{align}
Using Proposition~\ref{prop-relations-G}, one has
\begin{align*}
F_{\Delta k}^{k\Delta k}(r_0, r_1)
&= 
- \frac{\sqrt{r_0 r_1}\, ( \sqrt{r_0} + \sqrt{r_1}) [ G_{q}(r_0, r_1) + 2 G_{\hnabla \hnabla u}(r_0, r_1)]}{r_0 + r_1}.
\end{align*}

\section{Details of the computations}
\label{sec-details-computations}

In this section we give some details on the computations to establish Theorems~\ref{thm-R-uvw} and \ref{thm-R-upq}. These computations can be done by hand but the reader can also follow \cite{IochMass17b}.

The equation of \eqref{eq-R-uvw-functions} requires to compute the terms in the sum \eqref{eq-R-Tkp}, which itself requires to compute the arguments $B_1 \otimes \cdots \otimes B_k$ and the operators $T_{k,p}$.

For $r=2$, the list of arguments has been evaluated in \cite[Section~4.1]{IochMass17a} (starting with the expression \eqref{eq-def-P-uvw} of $P$) as well as their contractions with the tensor $G(g)_{\mu_1\dots \mu_{2p}}$. We make use of these results below. Then the computation of the operators $T_{k,p}$ reduces to the computation of the universal spectral functions $I_{d/2+p,k}$. As noticed in \cite{IochMass17a}, only the values $k=1,2,3,4$ have to be considered. 

Below is the list of the evaluation of these arguments in the corresponding operators $T_{k,p}$, where the following functional calculus rule (and its obvious generalizations) is used
\begin{align*}
f(r_0, r_1, \dots, r_k) \, E_{r_0} u^{n_0} E_{r_1} u^{n_1} E_{r_2} \cdots u^{n_{k-1}} E_{r_k}
&=
r_0^{n_0 + n_1 + \cdots + n_{k-1}} f(r_0, r_0, \dots, r_0) \, E_{r_0},
\end{align*}
where summations over $r_i$ in the spectrum of $u$ are omitted. In the following, the symbol $\leadsto$ is used to symbolize this evaluation.

For $k=1$, there is only one argument:
\begin{align*}
w & \leadsto 
I_{d/2,1}(r_0, r_1)\, 
E_{r_0} w E_{r_1}
\end{align*}

For $k=2$, one has:
\begin{align*}
-\tfrac{1}{2} g^{\mu\nu} g_{\rho\sigma}(\pmu\pnu g^{\rho\sigma})\, u\otimes u 
& \leadsto
-\tfrac{1}{2} g^{\mu\nu} g_{\rho\sigma}(\pmu\pnu g^{\rho\sigma})\, 
r_0^2 I_{d/2+1,2}(r_0, r_0, r_0)\,
E_{r_0}\,,
\\[1mm]
-g^{\mu\nu} g_{\rho\sigma}(\pnu g^{\rho\sigma}) \,u\otimes \pmu u 
& \leadsto
-g^{\mu\nu} g_{\rho\sigma}(\pnu g^{\rho\sigma}) \,
r_0 I_{d/2+1,2}(r_0, r_0, r_1)\,
E_{r_0} (\pmu u) E_{r_1}\,,
\\[1mm]
-\tfrac{d}{2} g^{\mu\nu} \, u \otimes \pmu\pnu u 
& \leadsto
- \tfrac{d}{2} g^{\mu\nu} \, 
r_0 I_{d/2+1,2}(r_0, r_0, r_1)\,
E_{r_0} (\pmu \pnu u) E_{r_1}\,,
\\[1mm]
-\tfrac{1}{2} g_{\rho\sigma} (\pmu g^{\rho\sigma})\, v^\mu \otimes u
& \leadsto
-\tfrac{1}{2} g_{\rho\sigma} (\pmu g^{\rho\sigma})\, 
r_1 I_{d/2+1,2}(r_0, r_1, r_1)\,
E_{r_0} v^\mu E_{r_1}\,,
\\[1mm]
- \tfrac{d}{2} \,v^\mu \otimes \pmu u 
& \leadsto
- \tfrac{d}{2} \,
I_{d/2+1,2}(r_0, r_1, r_2)\,
E_{r_0} v^\mu E_{r_1} (\pmu u) E_{r_2}\,,
\\[1mm]
 -\tfrac{1}{2} g_{\mu\nu} \,v^\mu \otimes v^\nu
 & \leadsto
 -\tfrac{1}{2} g_{\mu\nu} \,
 I_{d/2+1,2}(r_0, r_1, r_2)\,
E_{r_0} v^\mu E_{r_1} v^\nu E_{r_2}\,,
\\[1mm]
 - u \otimes \pmu v^\mu
& \leadsto
 -\,
r_0 I_{d/2+1,2}(r_0, r_0, r_1)\,
E_{r_0} (\pmu v^\mu) E_{r_1}\,.
\end{align*}

For $k=3$, one has:
\begin{align*}
& \big[
\begin{aligned}[t]
& g^{\mu\nu} g_{\rho\sigma} (\pmu\pnu g^{\rho\sigma}) 
+ 2 (\pmu\pnu g^{\mu\nu})
+ g_{\rho\sigma} (\pmu g^{\mu\nu})(\pnu g^{\rho\sigma})
+ 2 g_{\rho\sigma}(\pmu g^{\nu\rho})(\pnu g^{\mu\sigma})
\\
& + \tfrac{1}{2} g^{\mu\nu} g_{\rho\sigma} g_{\alpha\beta} (\pmu g^{\rho\sigma})(\pnu g^{\alpha\beta}) 
+ g^{\mu\nu} g_{\rho\sigma} g_{\alpha\beta} (\pmu g^{\rho\alpha})(\pnu g^{\sigma\beta})
\big]
\,u\otimes u\otimes u 
\end{aligned}
\\
& \qquad\leadsto
\big[
\begin{aligned}[t]
& g^{\mu\nu} g_{\rho\sigma} (\pmu\pnu g^{\rho\sigma}) 
+ 2 (\pmu\pnu g^{\mu\nu})
+ g_{\rho\sigma} (\pmu g^{\mu\nu})(\pnu g^{\rho\sigma})
+ 2 g_{\rho\sigma}(\pmu g^{\nu\rho})(\pnu g^{\mu\sigma})
\\
& + \tfrac{1}{2} g^{\mu\nu} g_{\rho\sigma} g_{\alpha\beta} (\pmu g^{\rho\sigma})(\pnu g^{\alpha\beta}) 
+ g^{\mu\nu} g_{\rho\sigma} g_{\alpha\beta} (\pmu g^{\rho\alpha})(\pnu g^{\sigma\beta})
\big]\, 
r_0^3 I_{d/2+2,3}(r_0, r_0, r_0, r_0)\,
E_{r_0}\,,
\end{aligned}
\\[1mm]
& (d+6) [ \tfrac{1}{2} g^{\mu\nu}g_{\rho\sigma} (\pnu g^{\rho\sigma}) + (\pnu g^{\mu\nu})]
\,u\otimes u\otimes \pmu u 
\\
& \qquad\leadsto
(d+6) [ \tfrac{1}{2} g^{\mu\nu}g_{\rho\sigma} (\pnu g^{\rho\sigma}) + (\pnu g^{\mu\nu}) ]\,
r_0^2 I_{d/2+2,3}(r_0, r_0, r_0, r_1)\,
E_{r_0} (\pmu u) E_{r_1}\,,
\\[1mm]
&[
\tfrac{d+4}{2} g^{\mu\nu}g_{\rho\sigma} (\pnu g^{\rho\sigma}) + 2 (\pnu g^{\mu\nu})
]
\,u\otimes \pmu u\otimes u 
\\
& \qquad\leadsto
[ \tfrac{d+4}{2} g^{\mu\nu}g_{\rho\sigma} (\pnu g^{\rho\sigma}) + 2 (\pnu g^{\mu\nu}) ]\,
r_0 r_1 I_{d/2+2,3}(r_0, r_0, r_1, r_1)\,
E_{r_0} (\pmu u) E_{r_1}\,,
\\[1mm]
&\tfrac{(d+2)^2}{2} g^{\mu\nu} \,u\otimes \pmu u\otimes \pnu u 
\leadsto
\tfrac{(d+2)^2}{2} g^{\mu\nu} \,
r_0 I_{d/2+2,3}(r_0, r_0, r_1, r_2)\,
E_{r_0} (\pmu u) E_{r_1} (\pnu u) E_{r_2}\,,
\\[1mm]
& (d+2) g^{\mu\nu}\, u\otimes u \otimes \pmu\pnu u
\leadsto
(d+2) g^{\mu\nu}\, 
r_0^2 I_{d/2+2,3}(r_0, r_0, r_0, r_1)\,
E_{r_0} (\pmu \pnu u) E_{r_1}\,,
\\[1mm]
& [\tfrac{1}{2} g_{\rho\sigma} (\pmu g^{\rho\sigma}) + g_{\mu\nu}(\prho g^{\rho\nu})]
(v^\mu \otimes u \otimes u + u \otimes v^\mu \otimes u + u \otimes u \otimes v^\mu)
\\
& \qquad\leadsto
\begin{multlined}[t]
[\tfrac{1}{2} g_{\rho\sigma} (\pmu g^{\rho\sigma}) + g_{\mu\nu}(\prho g^{\rho\nu})]\,
\big[ r_1^2 I_{d/2+2, 3}(r_0, r_1, r_1, r_1) \\
+ r_0 r_1 I_{d/2+2, 3}(r_0, r_0, r_1, r_1) + r_0^2 I_{d/2+2, 3}(r_0, r_0, r_0, r_1)\big]\,
E_{r_0} v^\mu E_{r_1}\,,
\end{multlined}
\\[1mm]
& \tfrac{d+2}{2} (v^\mu \otimes u \otimes \pmu u)
\leadsto
\tfrac{d+2}{2}\,
r_1 I_{d/2+2, 3}(r_0, r_1, r_1, r_2)\,
E_{r_0} v^\mu E_{r_1} (\pmu u) E_{r_2}\,,
\\[1mm]
& \tfrac{d+2}{2} (u \otimes \pmu u \otimes v^\mu)
\leadsto
\tfrac{d+2}{2}\,
r_0 I_{d/2+2, 3}(r_0, r_0, r_1, r_2)\,
E_{r_0} (\pmu u) E_{r_1} v^\mu E_{r_2}\,,
\\[1mm]
& \tfrac{d+2}{2} (u \otimes v^\mu \otimes \pmu u)
 \leadsto
\tfrac{d+2}{2}\,
r_0 I_{d/2+2, 3}(r_0, r_0, r_1, r_2)\,
E_{r_0} v^\mu E_{r_1} (\pmu u) E_{r_2}\,.
\end{align*}

For $k=4$, one has:
\begin{align*}
&3\big[
\begin{aligned}[t]
& -\tfrac{1}{2} g^{\mu\nu} g_{\rho\sigma} g_{\alpha\beta} (\pmu g^{\rho\sigma}) (\pnu g^{\alpha\beta}) 
- g^{\mu\nu} g_{\rho\sigma} g_{\alpha\beta} (\pmu g^{\rho\alpha}) (\pnu g^{\sigma\beta})
-2 g_{\rho\sigma} (\pmu g^{\mu\nu}) (\pnu g^{\rho\sigma})
\\
& -2 g_{\rho\sigma} (\pmu g^{\mu\rho}) (\pnu g^{\nu\sigma})
-2 g_{\rho\sigma} (\pmu g^{\nu\rho}) (\pnu g^{\mu\sigma})
\big] \, u \otimes u \otimes u \otimes u 
\end{aligned}
\\
& \qquad\leadsto
3\big[
\begin{aligned}[t]
& -\tfrac{1}{2} g^{\mu\nu} g_{\rho\sigma} g_{\alpha\beta} (\pmu g^{\rho\sigma}) (\pnu g^{\alpha\beta}) 
- g^{\mu\nu} g_{\rho\sigma} g_{\alpha\beta} (\pmu g^{\rho\alpha}) (\pnu g^{\sigma\beta})
-2 g_{\rho\sigma} (\pmu g^{\mu\nu}) (\pnu g^{\rho\sigma})
\\
& -2 g_{\rho\sigma} (\pmu g^{\mu\rho}) (\pnu g^{\nu\sigma})
-2 g_{\rho\sigma} (\pmu g^{\nu\rho}) (\pnu g^{\mu\sigma})
\big] \,
r_0^4 I_{d/2+3, 4}(r_0, r_0, r_0, r_0, r_0)\,
E_{r_0}\,,
\end{aligned}
\\[1mm]
& \begin{multlined}[b]
- (d+4) [\tfrac{1}{2} g^{\mu\nu} g_{\rho\sigma}(\pnu g^{\rho\sigma}) + (\pnu g^{\mu\nu})] 
[ 
3 \, u\otimes u \otimes u\otimes \pmu u 
\\ 
+ 2 \,u\otimes u \otimes \pmu u\otimes u
+ u \otimes \pmu u \otimes u \otimes u ]
\end{multlined}
\\
& \qquad\leadsto
\begin{multlined}[t]
- (d+4) [\tfrac{1}{2} g^{\mu\nu} g_{\rho\sigma}(\pnu g^{\rho\sigma}) + (\pnu g^{\mu\nu}) ]
[ 3 r_0^3 I_{d/2+3, 4}(r_0, r_0, r_0, r_0, r_1)
\\
+ 2 r_0^2 r_1 I_{d/2+3, 4}(r_0, r_0, r_0, r_1, r_1)
+ r_0 r_1^2 I_{d/2+3, 4}(r_0, r_0, r_1, r_1, r_1) ]
E_{r_0} (\pmu u) E_{r_1}\,,
\end{multlined}
\\[1mm]
& -\tfrac{1}{2} (d+4)(d+2) g^{\mu\nu} \, 
(2\,u \otimes u \otimes \pmu u \otimes \pnu u + u \otimes \pmu u \otimes u \otimes \pnu u )
\\
& \qquad\leadsto
\begin{multlined}[t]
 - \tfrac{1}{2} (d+4)(d+2) g^{\mu\nu} \, 
[ 2 r_0^2 I_{d/2+3, 4}(r_0, r_0, r_0, r_1, r_2) + r_0 r_1 I_{d/2+3, 4}(r_0, r_0, r_1, r_1, r_2) ] 
\\
\! E_{r_0} (\pmu u) E_{r_1} (\pnu u) E_{r_2}\,.
\end{multlined}
\end{align*}
The coefficient $\alpha$ and the spectral functions $F$ are evaluated by collecting these terms:
\begin{align}
\alpha 
&\vc 
\begin{aligned}[t]
& \tfrac{1}{3} (\pmu\pnu g^{\mu\nu}) 
-\tfrac{1}{12} g^{\mu\nu} g_{\rho \sigma}(\pmu\pnu g^{\rho\sigma}) 
+ \tfrac{1}{48} g^{\mu\nu} g_{\rho \sigma} g_{\alpha\beta}(\pmu g^{\rho\sigma})(\pnu g^{\alpha\beta})
\\[1mm]
& + \tfrac{1}{24} g^{\mu\nu} g_{\rho \sigma} g_{\alpha\beta}(\pmu g^{\rho\alpha})(\pnu g^{\sigma\beta}) 
-\tfrac{1}{12} g_{\rho\sigma}(\pmu g^{\mu\nu})(\pnu g^{\rho\sigma}) 
\\[1mm]
& + \tfrac{1}{12} g_{\rho\sigma}(\pmu g^{\nu\rho})(\pnu g^{\mu\sigma}) 
- \tfrac{1}{4} g_{\rho\sigma}(\pmu g^{\mu\rho})(\pnu g^{\nu\sigma}),
\end{aligned}
\nonumber
\\
F_{w}(r_0, r_1)
&\vc
I_{d/2,1}(r_0, r_1),
\nonumber
\\
F_{\partial v}(r_0, r_1)
&\vc
 -\, r_0 I_{d/2+1,2}(r_0, r_0, r_1), 
\nonumber
\\
F_{\partial \partial u}(r_0, r_1)
&\vc
 - \tfrac{d}{2} \, r_0 I_{d/2+1,2}(r_0, r_0, r_1) 
+ (d+2) \, r_0^2 I_{d/2+2,3}(r_0, r_0, r_0, r_1),
\nonumber
\\
F_{\partial u}^\mu(r_0, r_1)
&\vc \begin{aligned}[t]
& - \alpha^\mu \,
r_0 I_{d/2+1,2}(r_0, r_0, r_1)\, 
+ (d+6) [ \tfrac{1}{2} \alpha^\mu + \beta^\mu ]\,
r_0^2 I_{d/2+2,3}(r_0, r_0, r_0, r_1)\, 
\\[1mm]
& + [ \tfrac{d+4}{2} \alpha^\mu + 2 \beta^\mu ]\,
r_0 r_1 I_{d/2+2,3}(r_0, r_0, r_1, r_1)\, 
\\[1mm]
& \begin{multlined}[t]
- (d+4) [\tfrac{1}{2} \alpha^\mu + \beta^\mu ]
[ 3 r_0^3 I_{d/2+3, 4}(r_0, r_0, r_0, r_0, r_1)
\\[1mm]
+ 2 r_0^2 r_1 I_{d/2+3, 4}(r_0, r_0, r_0, r_1, r_1)
+ r_0 r_1^2 I_{d/2+3, 4}(r_0, r_0, r_1, r_1, r_1) ], 
\end{multlined}
\end{aligned}
\nonumber
\\
F_{v, \mu}(r_0, r_1)
&\vc 
\begin{multlined}[t]
-\tfrac{1}{2} \alpha_\mu\, 
r_1 I_{d/2+1,2}(r_0, r_1, r_1)\, 
+
[\tfrac{1}{2} \alpha_\mu + \beta_\mu ]\,
[ r_1^2 I_{d/2+2, 3}(r_0, r_1, r_1, r_1) 
\\
+ r_0 r_1 I_{d/2+2, 3}(r_0, r_0, r_1, r_1) 
+ r_0^2 I_{d/2+2, 3}(r_0, r_0, r_0, r_1)],
\end{multlined}
\nonumber
\\
F_{v,v}(r_0, r_1, r_2)
&\vc
-\tfrac{1}{2} \, I_{d/2+1,2}(r_0, r_1, r_2),
\nonumber
\\
F_{\partial u,v}(r_0, r_1, r_2)
&\vc
\tfrac{d+2}{2} \, r_0 I_{d/2+2, 3}(r_0, r_0, r_1, r_2),
\nonumber
\\
F_{v,\partial u}(r_0, r_1, r_2)
&\vc \begin{aligned}[t]
& - \tfrac{d}{2} \,
I_{d/2+1,2}(r_0, r_1, r_2)
+ \tfrac{d+2}{2} \,
r_1 I_{d/2+2, 3}(r_0, r_1, r_1, r_2)
\\
& + \tfrac{d+2}{2} \,
r_0 I_{d/2+2, 3}(r_0, r_0, r_1, r_2),
\end{aligned}
\nonumber
\\
F_{\partial u, \partial u}(r_0, r_1, r_2)
&\vc
\begin{aligned}[t]
& \tfrac{(d+2)^2}{2} \,
r_0 I_{d/2+2,3}(r_0, r_0, r_1, r_2)
\\
& \begin{multlined}[t]
 - \tfrac{(d+4)(d+2)}{2} \, 
[ 2 r_0^2 I_{d/2+3, 4}(r_0, r_0, r_0, r_1, r_2) 
+ r_0 r_1 I_{d/2+3, 4}(r_0, r_0, r_1, r_1, r_2) ].
\end{multlined}
\end{aligned}
\label{eq-def-F-I}
\end{align}

\bigskip
As in \cite[Section~4.3]{IochMass17a}, the strategy to compute \eqref{eq-R-upq-functions} could be to make the change of variables $(u, v^\mu, w) \mapsto (u, p^\mu, q)$. Here we use another strategy which simplifies the computation since it is based on \eqref{eq-P-exp-HKP-f}, \eqref{eq-H-K-upq}, and \eqref{eq-change-uvw-upq}. 

Indeed, as already noticed, one can apply verbatim the computation of the arguments and their contractions with $\pmu$ replaced by $\nmu$, and at the same time, using $p^\mu + g^{\mu\nu} (\nnu u) - [\tfrac{1}{2} \alpha^\mu - \beta^\mu ] u$ in place of $v^\mu$, and $q$ in place of $w$. So, \eqref{eq-R-uvw-functions} can be replaced by
\begin{align*}
& \alpha\, r_0^{-d/2+1}\, 
E_{r_0}
+ F_{\partial u}^\mu(r_0, r_1) \, 
E_{r_0} (\nmu u) E_{r_1}
+ g^{\mu\nu} F_{\partial \partial u}(r_0, r_1) \, 
E_{r_0} (\nmu \nnu u) E_{r_1}
\\
& + g^{\mu\nu} F_{\partial u, \partial u}(r_0, r_1, r_2) \, 
E_{r_0} (\nmu u) E_{r_1} (\nnu u) E_{r_2}
+ F_{w}(r_0, r_1) \, 
E_{r_0} w E_{r_1}
+ F_{v, \mu}(r_0, r_1) \, 
E_{r_0} v^\mu E_{r_1}
\\
& + F_{v,\partial u}(r_0, r_1, r_2) \, 
E_{r_0} v^\mu E_{r_1} (\nmu u) E_{r_2}
+ F_{\partial u,v}(r_0, r_1, r_2) \, 
a E_{r_0} (\nmu u) E_{r_1} v^\mu E_{r_2}
\\
& + g_{\mu\nu} F_{v,v}(r_0, r_1, r_2) \, 
E_{r_0} v^\mu E_{r_1} v^\nu E_{r_2}
+ F_{\partial v}(r_0, r_1) \, 
E_{r_0} (\nmu v^\mu) E_{r_1}.
\label{eq-uvwnabla-functions}
\end{align*}
The next step is to replace $\nmu$ by $\hnmu$. We use  \eqref{eq-nablaLCu}, \eqref{eq-laplacianLCu} and
\begin{align*}
\nmu v^\mu
&= \nmu p^\mu 
+ (\pmu g^{\mu\nu}) (\nnu u)
+ g^{\mu\nu} (\nmu \nnu u)
- [\tfrac{1}{2} \pmu \alpha^\mu - \pmu \beta^\mu ] u
- [\tfrac{1}{2} \alpha^\mu - \beta^\mu ] (\nmu u)
\\
&= \nmu p^\mu 
+ g^{\mu\nu} (\hnabla_\mu \hnabla_\nu u)
+ \beta^\mu (\hnabla_\mu u)
- [\tfrac{1}{2} \pmu \alpha^\mu - \pmu \beta^\mu ] u
\\
&= \hnabla_\mu p^\mu 
+ \tfrac{1}{2} \alpha_\mu p^\mu
+ g^{\mu\nu} (\hnabla_\mu \hnabla_\nu u)
+ \beta^\mu (\hnabla_\mu u)
- [\tfrac{1}{2} \pmu \alpha^\mu - \pmu \beta^\mu ] u.
\end{align*}
This leads to a new expression containing the terms in \eqref{eq-R-upq-functions}, with the functions given in the list after Theorem~\ref{thm-R-upq}, to which we have to add the two terms $G_{\hnabla u}^\mu(r_0, r_1)\, E_{r_0} (\hnmu u) E_{r_1} + G_{p, \mu}(r_0, r_1) E_{r_0} p^\mu E_{r_1}$, with 
\begin{align*}
G_{\hnabla u}^\mu(r_0, r_1)
&= \begin{multlined}[t] 
F_{\partial u}^\mu(r_0, r_1)
+ g^{\mu\nu} \, F_{v, \nu}(r_0, r_1)
+ \beta^\mu  \, F_{\partial v}(r_0, r_1)
\\
 + [ \tfrac{1}{2} \alpha^\mu - \beta^\mu ] 
[ 
F_{\partial \partial u}(r_0, r_1)
 - r_0 F_{v,\partial u}(r_0, r_0, r_1)
 - r_1 F_{\partial u,v}(r_0, r_1, r_1)
 \\
- r_0 F_{v,v}(r_0, r_0, r_1)
- r_1 F_{v,v}(r_0, r_1, r_1)
],
\end{multlined}
\\
G_{p, \mu}(r_0, r_1)
&= \begin{multlined}[t]
F_{v, \mu}(r_0, r_1)
+ \tfrac{1}{2} \alpha_\mu F_{\partial v}(r_0, r_1)
\\
- [\tfrac{1}{2} \alpha_\mu - \beta_\mu ] \,
[
 r_0 F_{v,v}(r_0, r_0, r_1)
+ r_1 F_{v,v}(r_0, r_1, r_1)
].
\end{multlined}
\end{align*}
A direct computation performed using the expressions of the spectral functions $F$ in terms of $I_{d/2,1}$ shows that
\begin{equation*}
G_{\hnabla u}^\mu(r_0, r_1) = G_{p, \mu}(r_0, r_1) = 0.
\end{equation*}
The coefficient in front of $r_0^{-d/2+1} \, E_{r_0}$ is
\begin{align*}
\tfrac{1}{6} R
&=
\alpha 
- \tfrac{1}{4} \alpha^\mu \beta_\mu
+ \tfrac{1}{2} \beta^\mu \beta_\mu
+ \tfrac{1}{4} \pmu \alpha^\mu - \tfrac{1}{2} \pmu \beta^\mu
- \tfrac{1}{16} \alpha_\mu \alpha^\mu + \tfrac{1}{4} \alpha_\mu \beta^\mu - \tfrac{1}{4} \beta_\mu \beta^\mu,
\end{align*}
where $R$ is the scalar curvature of the metric $g$.

The spectral functions $G$ can be written in terms of log functions for $d=2$ (see \cite[Cor.~3.3]{IochMass17a}), as Laurent polynomials for $d \geq 4$ even (see \cite[Prop.~3.5]{IochMass17a}), and in terms of square roots of $r_i$ for $d$ odd (see \cite[Prop.~3.4]{IochMass17a}). This completes the proof of Corollaries~\ref{cor-R-d=2}, \ref{cor-R-d>2-even}, and \ref{cor-R-d=3}.

\section{Applications to the noncommutative torus}
\label{sec-applications-NCT}

In this section, we first apply Theorem~\ref{thm-R-upq} to the noncommutative 2-torus at \emph{rational values} of the deformation parameter $\theta$, for which it is known that we get a geometrical description in terms of sections of a fiber bundle. Some computations of $a_2(a,P)$ for specific operators $P$ have been performed at irrational values of $\theta$ to determine the so-called scalar curvature (our $\mathcal{R}_2$) \cite{ConnTret11a, ConnMosc14a, FathKhal11a, FathKhal12a, DabSit13, FathKhal13a, Sitarz14, Fath15a, DabSit15, Liu15a, Sade16a, ConnesFath16}. 

We now show that we can apply our general result at rational values of $\theta$ and get the same expressions for the scalar curvature $\mathcal{R}_2$ which appears to be written in terms of $\theta$-independent spectral functions. In particular, its expression is the same for rational and irrational $\theta$.

Let $\Theta \in M_d(\R)$ be a skew-symmetric real matrix. The noncommutative $d$-dimensional torus $C(\TT^d_\Theta)$ is defined as the universal unital $C^\ast$-algebra generated by unitaries $U_k$, $k = 1, \dots , d$, satisfying the relations
\begin{equation}
\label{eq-defrelationNCT}
U_k U_\ell = e^{2 i \pi \Theta_{k,\ell}} U_\ell U_k.
\end{equation}
This $C^\ast$-algebra contains, as a dense sub-algebra, the space of smooth elements for the natural action of the $d$-dimensional torus $\TT^d$ on $C(\TT^d_\Theta)$. This sub-algebra is described as elements in $C(\TT^d_\Theta)$ with an expansion
\begin{equation*}
a = \sum_{(k_i) \in \Z^{d}} a_{k_1, \dots, k_{d}} U_1^{k_1} \cdots U_{d}^{k_{d}}
\end{equation*}
where the sequence $(a_{k_1, \dots, k_{d}})$ belongs to the Schwartz space $\cS(\Z^{d})$. We denote by $C^\infty(\TT^d_\Theta)$ this algebra. The $C^\ast$-algebra $C(\TT^d_\Theta)$ has a unique normalized faithful positive trace $\NCtr$ whose restriction on smooth elements is given by
\begin{equation*}
\NCtr\, ( \sum_{(k_i) \in \Z^{d}} a_{k_1, \dots, k_{d}} U_1^{k_1} \cdots U_{d}^{k_{d}}) \vc a_{0, \dots, 0}
\end{equation*}
This trace satisfies $\NCtr\,(\bbbone) = 1$ where $\bbbone$ in the unit element of $C(\TT^d_\Theta)$. The smooth algebra  $C^\infty(\TT^d_\Theta)$ has $d$ canonical derivations $\dmu$, $\mu=1,\dots,d$, defined on the generators by
\begin{equation}
\label{def: deltak}
\dmu(U_k) \vc \delta_{\mu,k}\, i U_k.
\end{equation}
For any $a \in C(\TT^d_\Theta)$, one has $\dmu(a^\ast) = (\dmu a)^\ast$ (real derivations). 

Denote by $\H$ the Hilbert space of the GNS representation of $C(\TT^d_\Theta)$ defined by $\NCtr$. Each  derivation $\dmu$ defines a unbounded operator on $\H$, denoted also by $\dmu$, which satisfies $\dmu^\dagger = - \dmu$ (here $^\dagger$ denotes the adjoint of the operator).

\subsection{The geometry of the rational noncommutative tori}

In the following, we consider the special case  of even dimensional noncommutative tori, $d = 2m$, with
\begin{equation}
\label{def: matrice theta}
\Theta = 
\begin{pmatrix}
\theta_1 \chi & \cdots & 0 \\
0 & \ddots & 0 \\
0 & \cdots & \theta_m \chi
\end{pmatrix}, \text{ where } \chi \vc \begin{pmatrix} 0 & 1 \\ -1 & 0 \end{pmatrix},
\end{equation}
for a family of deformation parameters $\theta_1, \ldots, \theta_m$. Then
\begin{equation*}
C(\TT^{2m}_\Theta) \simeq C(\TT^{2}_{\Theta_1}) \otimes \cdots \otimes C(\TT^{2}_{\Theta_m})
\end{equation*}

When $d=2$ and $\theta = p/q$, where $p,q$ are relatively prime integers and $q>0$, it is known that $C(\TT^{2}_\Theta) \simeq \Gamma(A_{\theta})$ is isomorphic to the algebra of continuous sections of a fiber bundle $A_{\theta}$ in $M_q(\C)$ algebras over a $2$-torus $\TT^{2}_B$, as recalled in Section~\ref{sec-geometry-NCT}. Similarly, for $d=4$, with $\theta_1 = p_1/q_1$ and $\theta_2 = p_2/q_2$, $C(\TT^{4}_\Theta)$ is the space of sections of a fiber bundle in $M_{q_1 q_2}(\C)$ algebras over a $4$-torus $\TT^{2}_{B, 1} \times \TT^{2}_{B, 2}$. \\
Moreover, in the identification $C^\infty(\TT^2_\Theta) \simeq \Gamma^\infty(A_{\theta})$, the two derivations $\dmu$ are the two components of the unique flat connection $\nmu$ on $A_{\theta}$.

This geometrical description allows to use the results of Section~\ref{sec-method-results} to compute $a_2(a, P)$ for a differential operator on $\H$ of the form $P = - g^{\mu\nu} u \dmu \dnu - [ p^\nu + g^{\mu\nu} (\dmu u) - (\tfrac{1}{2} \alpha^\nu - \beta^\nu ) u ] \dnu - q$

\subsection{The noncommutative two torus}

In this section, we compute the coefficient $a_2(a, P)$ on the rational noncommutative two torus for a differential operator $P$ considered in \cite{ConnTret11a, FathKhal11a, FathKhal12a, ConnMosc14a} for the irrational noncommutative two torus. Let us introduce the following notations.

Let $\tau = \tau_1 + i \tau_2 \in \C$ with non zero imaginary part. We consider the constant metric $g$ defined by
\begin{align*}
g^{11} = 1,
\qquad
g^{12} = g^{21} &= \Re(\tau) = \tau_1,
\qquad
g^{22} = \abs{\tau}^2,
\end{align*}
with inverse matrix
\begin{align*}
g_{11} = \tfrac{\abs{\tau}^2}{\Im(\tau)^2} = \tfrac{\abs{\tau}^2}{\tau_2^2},
\qquad
g_{12} = g_{21} = - \tfrac{\Re(\tau)}{\Im(\tau)^2} = - \tfrac{\tau_1}{\tau_2^2},
\qquad
g_{22} = \tfrac{1}{\Im(\tau)^2} = \tfrac{1}{\tau_2^2}.
\end{align*}
We will use the constant tensors 
\begin{align*}
\epsilon^1 \vc 1,
\qquad
\epsilon^2 \vc \tau,
\qquad
\bepsilon^1 = 1,
\qquad
\bepsilon^2 = \btau,
\qquad
h^{\mu\nu} \vc \bepsilon^\mu \epsilon^\nu,
\end{align*}
which imply $h^{11} = 1$, $h^{12} = \tau$, $h^{21} = \btau$, $h^{22} = \abs{\tau}^2$. Then the symmetric part of $h^{\mu\nu}$ is the metric, $g^{\mu\nu} = \tfrac{1}{2} (h^{\mu\nu} + h^{\nu\mu})$, and $g_{\mu\nu} \bepsilon^\mu \bepsilon^\nu = 0$.

On the (GNS) Hilbert space $\H$, consider the following operators $\delta$, $\delta^\dagger$ and the Laplacian:
\begin{align*}
 \delta \vc \bepsilon^\mu \dmu = \delta_1 + \btau \delta_2,\qquad\delta^\dagger = - \epsilon^\mu \dmu = - \delta_1 - \tau \delta_2\\
\Delta \vc \delta^\dagger \delta 
= - \epsilon^\mu \bepsilon^\nu \dmu \dnu
= - h^{\mu\nu} \dmu \dnu
= - g^{\mu\nu} \dmu \dnu.
\end{align*}
For $k \in C^\infty(\TT^d_\Theta)$, $k>0$, the operator $P$ is defined as
\begin{align}
\label{def: P1 et P2}
P \vc \begin{pmatrix} P_1 & 0 \\ 0 & P_2 \end{pmatrix}
\end{align}
with
\begin{align*}
P_1 &\vc k \Delta k 
= - g^{\mu\nu} k \dmu \dnu k
=  - g^{\mu\nu} k^2 \dmu \dnu - 2 g^{\mu\nu} k (\dnu k) \dmu  - g^{\mu\nu} k (\dmu \dnu k)
\\
& \cv - u_1 g^{\mu\nu} \dmu \dnu - v_1^\mu \dmu - w_1,
\\[1mm]
P_2 &\vc \delta^\dagger k^2 \delta 
= - \epsilon^\nu \bepsilon^\mu \dnu k^2 \dmu
= - \bepsilon^\mu \epsilon^\nu (\dnu k^2) \dmu - \bepsilon^\mu \epsilon^\nu k^2 \dmu \dnu
= - g^{\mu\nu} k^2 \dmu \dnu - h^{\mu\nu} (\dnu k^2) \dmu
\\
& \cv - u_2 g^{\mu\nu} \dmu \dnu - v_2^\mu \dmu - w_2,
\end{align*}
so that
\begin{align*}
u_1 &= k^2,
&
v_1^\mu &= 2 g^{\mu\nu} k (\dnu k),
&
w_1 & = - k (\Delta  k),
\\
u_2 &= k^2,
&
v_2^\mu &= h^{\mu\nu} (\dnu k^2)
= h^{\mu\nu} [ k (\dnu k) + (\dnu k) k ],
&
w_2 & = 0.
\end{align*}
For the forthcoming computations, since the metric $g$ is constant, we have $\alpha = R = \alpha^\mu = \beta^\mu = 0$, $\hnmu = \nmu = \dmu$ (the last equality being a property of the geometrical presentation of $C^\infty(\TT^d_\Theta)$, as recalled above), $F_{\partial u}^\mu = F_{v, \mu} = 0$. Here $\abs{g}^{1/2} = \det(g_{\mu\nu})^{1/2}= \tau_2^{-1}$. We can write $P_1$ and $P_2$ in the covariant form \eqref{eq-def-P-upq} with
\begin{align*}
u_1 &= k^2,
&
p_1^\mu &= g^{\mu\nu} \big[ k (\dnu k) - (\dnu k) k \big],
&
q_1 & = - k (\Delta  k),
\\
u_2 &= k^2,
&
p_2^\mu &= (h^{\mu\nu} - g^{\mu\nu})\big[ k (\dnu k) + (\dnu k) k \big],
&
q_2 & = 0.
\end{align*}

Let 
\begin{align*}
f^{\mu\nu} &\vc \tfrac{1}{2} (h^{\mu\nu} - h^{\nu\mu}),
\end{align*}
so that $h^{\mu\nu} = g^{\mu\nu} + f^{\mu\nu}$ and $h^{\nu\mu} = g^{\mu\nu} - f^{\mu\nu}$, and define
\begin{align*}
Q_g(a,b,c)
&\vc
\frac{ \sqrt{a} 
(a \sqrt{b} + 3 a \sqrt{c} - \sqrt{a} c - \sqrt{a b c} - 2 b \sqrt{c})}
{(a - b) (\sqrt{a} - \sqrt{b})(\sqrt{a} - \sqrt{c})^3},
\\
Q_f(a,b,c)
&\vc
\frac{a (\sqrt{b} + \sqrt{c})}
{(a - b) (\sqrt{a} - \sqrt{b}) (a - c)},
\end{align*}
with the following (spectral) functions
\begin{align}
F_{\Delta k}(r_0, r_1) &=
\frac{r_0 - r_1 - \sqrt{r_0 r_1} \big( \ln(r_0) - \ln(r_1) \big)}{( \sqrt{r_0} - \sqrt{r_1} )^3},
\label{eq-NCT2-FDk}
\\[3mm]
F_{\partial k \partial k}^{\mu\nu}(r_0, r_1, r_2)
&= \begin{aligned}[t]
&
\frac{g^{\mu\nu} (\sqrt{r_0} + \sqrt{r_2}) (\sqrt{r_0} -2 \sqrt{r_1} + \sqrt{r_2})
+ f^{\mu\nu} (\sqrt{r_0} - \sqrt{r_2})^2}
{(\sqrt{r_0} - \sqrt{r_1}) (\sqrt{r_0} - \sqrt{r_2})^2 (\sqrt{r_1} - \sqrt{r_2})}
\\
&
+ [ g^{\mu\nu} Q_g(r_0, r_1, r_2) - f^{\mu\nu} Q_f(r_0, r_1, r_2) ] \ln(r_0)
\\
&
- \frac{( r_1 + \sqrt{r_0 r_2} )[ 
g^{\mu\nu} ( r_1 + \sqrt{r_0 r_2} ) - f^{\mu\nu} ( r_1 - \sqrt{r_0 r_2} )
 ]}
{(\sqrt{r_0} - \sqrt{r_1}) (r_0 - r_1)(\sqrt{r_1} - \sqrt{r_2}) (r_1 - r_2)} \ln(r_1)
\\
&
+ [ g^{\mu\nu} Q_g(r_2, r_1, r_0) - f^{\mu\nu} Q_f(r_2, r_1, r_0) ] \ln(r_2).
\end{aligned}
\label{eq-NCT2-Fdkdk}
\end{align}
With these notations:
\begin{proposition}
For the $2$-dimensional noncommutative torus at rational values of the deformation parameter $\theta$, one has $a_2(a,P) = \varphi \circ L(a \mathcal{R}_2)$ for any $a \in C(\TT^{2}_\Theta)$ with
\begin{align}
\label{eq-NCT-result}
\mathcal{R}_2
&\cv
\tfrac{1}{4\,\pi}\,[
F_{\Delta k}(r_0, r_1) \, \hlterm{E_{r_0} (\Delta k) E_{r_1}}
+ F_{\partial k \partial k}^{\mu\nu}(r_0, r_1, r_2) \, \hlterm{E_{r_0} (\dmu k) E_{r_1} (\dnu k) E_{r_2}}
\,].
\end{align}
where $L(a)$ is the left multiplication of $a$ on $\H$ (GNS representation).
\end{proposition}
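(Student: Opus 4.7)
The plan is to exploit the block-diagonal structure $P = \mathrm{diag}(P_1, P_2)$: taking $a$ to act diagonally on $\H \oplus \H$, one has $\Tr(a\,e^{-tP}) = \Tr(a\,e^{-tP_1}) + \Tr(a\,e^{-tP_2})$, so it suffices to compute $\mathcal{R}_2(P_1)$ and $\mathcal{R}_2(P_2)$ separately and add them. The geometric framework of Section~\ref{sec-applications-NCT}, with $C^\infty(\TT^2_\Theta) \simeq \Gamma^\infty(A_\theta)$ for rational $\theta$ and $\dmu$ the flat connection, places us directly in the setting of Theorem~\ref{thm-R-upq} with $\hnmu = \nmu = \dmu$. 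The constant metric gives $R=\alpha=\alpha^\mu=\beta^\mu=0$, so Corollary~\ref{cor-R-d=2} applies with substantial simplifications.

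For $P_1 = k\Delta k$ I would recognize the conformal-like transformed Laplacian of Section~\ref{subsec-conf-like-lap}, whose $\mathcal{R}_2$ is already packaged in \eqref{eq-conf-like-FDelta}--\eqref{eq-conf-like-Fdkdk}. For $P_2 = \delta^\dagger k^2 \delta$, the data $u_2 = k^2$, $p_2^\mu = f^{\mu\nu}(k(\dnu k) + (\dnu k)k)$, $q_2 = 0$ are substituted directly into Corollary~\ref{cor-R-d=2}. Both outputs are naturally written in terms of $\dmu u$, $\dmu\dnu u$, $\hnmu p_i^\mu$ and products of $p_i^\mu$; they then have to be converted to expressions in $\dmu k$ and $\Delta k$ alone.

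The conversion uses that the spectral projections of $u=k^2$ coincide with those of $k$, with eigenvalues $\sqrt{r_i}$. This yields the elementary identities
\begin{align*}
E_{r_0}(\dmu u) E_{r_1} &= (\sqrt{r_0} + \sqrt{r_1})\,E_{r_0}(\dmu k) E_{r_1}, \\
E_{r_0} p_1^\mu E_{r_1} &= g^{\mu\nu}(\sqrt{r_0} - \sqrt{r_1})\,E_{r_0}(\dnu k) E_{r_1}, \\
E_{r_0} p_2^\mu E_{r_1} &= f^{\mu\nu}(\sqrt{r_0} + \sqrt{r_1})\,E_{r_0}(\dnu k) E_{r_1},
\end{align*}
and, by inserting $\bbbone = \sum_{r_2} E_{r_2}$ between the two derivatives arising from Leibniz,
$$g^{\mu\nu} E_{r_0}(\dmu\dnu u) E_{r_1} = -(\sqrt{r_0} + \sqrt{r_1})\,E_{r_0}(\Delta k) E_{r_1} + 2\, g^{\mu\nu} \sum_{r_2} E_{r_0}(\dmu k) E_{r_2}(\dnu k) E_{r_1}.$$
Analogous manipulations handle $E_{r_0}(\hnmu p_i^\mu) E_{r_1}$ and $q_1 = -k(\Delta k)$. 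After substitution, only the two structures $E_{r_0}(\Delta k) E_{r_1}$ and $E_{r_0}(\dmu k) E_{r_1}(\dnu k) E_{r_2}$ survive, and their coefficients are fully determined by the seven $G$-functions of Corollary~\ref{cor-R-d=2} multiplied by the above $\sqrt{r_i}$-factors.

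The main obstacle is the purely algebraic verification that summing the contributions from $P_1$ and $P_2$, after the above weightings, telescopes into precisely the log-expressions \eqref{eq-NCT2-FDk}--\eqref{eq-NCT2-Fdkdk} defining $F_{\Delta k}$ and $F_{\partial k \partial k}^{\mu\nu}$. This is a lengthy but symbolic manipulation of rational functions and logarithms in up to three variables $r_0, r_1, r_2$, for which the ancillary \texttt{Mathematica} notebook \cite{IochMass17b} is the natural tool; the identity-check reduces to expanding and regrouping the seven logarithms appearing in Corollary~\ref{cor-R-d=2} for each of $P_1$ and $P_2$, adding, and simplifying to the claimed symmetric ($g^{\mu\nu}$) and antisymmetric ($f^{\mu\nu}$) components.
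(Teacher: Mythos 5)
Your proposal is correct and follows essentially the same route as the paper: split $P$ into $P_1$ and $P_2$, treat $P_1=k\Delta k$ via the conformal-like Laplacian formulas \eqref{eq-conf-like-FDelta}--\eqref{eq-conf-like-Fdkdk}, apply the general $a_2$ result to $P_2$ with $u_2=k^2$, $p_2^\mu$, $q_2=0$, convert to $\dmu k$ and $\Delta k$ using the $\sqrt{r_i}$-weighted spectral identities, and sum. The conversion identities you state are exactly right, and the final symbolic simplification to \eqref{eq-NCT2-FDk}--\eqref{eq-NCT2-Fdkdk} is handled in the paper the same way (via the ancillary notebook).
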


Since we are in dimension $d=2$, the appearance of the log function in this result is expected. It is shown in Appendix~\ref{sec-comparison-NCT} that this result coincides with a previous one in \cite[Theorem~5.2]{FathKhal11a} for the irrational noncommutative two torus. As already mentioned, this computation, combined with results for irrational values of $\theta$, proves that $\mathcal{R}_2$ does not depend on the deformation parameter $\theta$, and in particular if it is irrational or not.  The simplification of the $q$'s in \eqref{eq-varphi-NCt-computation} (see also \eqref{eq-varphi-NCt-computation-d=4}) points out that the choice of the Hilbert space $\H$ as the GNS representation is a key fact because it permits to identify $\mathcal{R}_2$ in \eqref{eq-NCT-result} as an element in $C(\TT^{2}_\Theta)$ acting by left multiplication on $\H$. 

Nevertheless, the fact that $\mathcal{R}_2$ can be written in terms of $\theta$-independent spectral functions still needs an abstract proof. The spectrum of the differential operator $P$ depends on the differential operators $\dmu$ and some multiplication operators by elements of the algebra (written here in terms of $k$ and its derivatives $\dmu k$, $\dmu\dnu k$). On the one hand, the spectrum of the closed extension of the operator $\dmu$ in the Hilbert space of the GNS representation consists only of eigenvalues $ik_\mu$, $k_\mu \in \Z$, associated to  eigenvectors $U_1^{k_1} \cdots U_{d}^{k_{d}}$, so that it does not depend explicitly of $\theta$. On the other hand, the computations of $\mathcal{R}_2$, performed here or in \cite{ConnTret11a, FathKhal11a, FathKhal12a, ConnMosc14a}, are based on formal manipulations of the product in the algebra, in particular they do not use the defining relations \eqref{eq-defrelationNCT}. This explains why these methods bypass the $\theta$ dependency and give rise to some expressions in terms of $\theta$-independent spectral functions. Notice that for specific values of $\theta$, for instance $\theta = 0$ (the commutative case), these expressions can be simplified. So, one has to look at \eqref{eq-NCT-result} as  a “$\theta$ universal” expression for $\mathcal{R}_2$.

\begin{proof}
One has $a_2(a, P) = a_2(a, P_1) + a_2(a, P_2)$. Denote by $\mathcal{R}_2$ (resp.~$\mathcal{R}_{2}^{(1)}$, $\mathcal{R}_2^{(2)}$) the expressions associated to $P$ (resp.~$P_1$, $P_2$). Then one has $\mathcal{R}_2 = \mathcal{R}_2^{(1)} + \mathcal{R}_2^{(2)}$. 

The operator $P_1$ is a conformal like transformed Laplacian, so the computation of  $\mathcal{R}_2^{(1)}$ is a direct consequence of the results of Section~\ref{subsec-conf-like-lap}. Here the metric is constant, so that $R=0$, and it remains 
\begin{align*}
\mathcal{R}_2^{(1)}
& \cv \tfrac{1}{4\,\pi} \big[
F_{(1)\Delta k}(r_0, r_1) \, E_{r_0} (\Delta k) E_{r_1}
+ F_{(1)\partial k \partial k}^{\mu\nu}(r_0, r_1, r_2) \, E_{r_0} (\dmu k) E_{r_1} (\dnu k) E_{r_2}
\big],
\end{align*}
where, using \eqref{eq-conf-like-FDelta} and \eqref{eq-conf-like-Fdkdk},
\begin{align*}
F_{(1)\Delta k}(r_0, r_1)
&= 
-\sqrt{r_0} \, G_{q}(r_0, r_1)
- (\sqrt{r_0} + \sqrt{r_1}) \, G_{\hnabla \hnabla u}(r_0, r_1)
- (\sqrt{r_0} - \sqrt{r_1}) \, G_{\hnabla p}(r_0, r_1),
\\[1mm]
F_{(1)\partial k \partial k}^{\mu\nu}(r_0, r_1, r_2)
&= \begin{aligned}[t]
& 2 g^{\mu\nu} G_{\hnabla \hnabla u}(r_0, r_2)
\\
& + g^{\mu\nu} (\sqrt{r_0} + \sqrt{r_1})(\sqrt{r_1} + \sqrt{r_2}) \, G_{\hnabla u, \hnabla u}(r_0, r_1, r_2)
\\
& +  g^{\mu\nu} (\sqrt{r_0} - \sqrt{r_1})(\sqrt{r_1} + \sqrt{r_2}) \, G_{p, \hnabla u}(r_0, r_1, r_2)
\\
& + g^{\mu\nu} (\sqrt{r_0} + \sqrt{r_1})(\sqrt{r_1} - \sqrt{r_2}) \, G_{\hnabla u, p}(r_0, r_1, r_2)
\\
& + g^{\mu\nu} (\sqrt{r_0} - \sqrt{r_1})(\sqrt{r_1} - \sqrt{r_2}) \, G_{p, p}(r_0, r_1, r_2).
\end{aligned}
\end{align*}

For the operator $P_2$, one applies Theorem~\ref{thm-R-upq}:
\begin{align*}
\mathcal{R}_2^{(2)}
&= \tfrac{1}{4\,\pi} \big[
\begin{aligned}[t]
& (\sqrt{r_0} + \sqrt{r_1}) \, G_{\hnabla \hnabla u}(r_0, r_1) \, 
g^{\mu\nu} E_{r_0} (\dmu \dnu k) E_{r_1}
+ 2 G_{\hnabla \hnabla u}(r_0, r_2) \, 
g^{\mu\nu} E_{r_0} (\dmu k) E_{r_1} (\dnu k) E_{r_2}
\\
& + (\sqrt{r_0} + \sqrt{r_1}) (\sqrt{r_1} + \sqrt{r_2}) \, G_{\hnabla u, \hnabla u}(r_0, r_1, r_2) \, 
g^{\mu\nu} E_{r_0} (\dmu k) E_{r_1} (\dnu k) E_{r_2}
\\
& + (\sqrt{r_0} + \sqrt{r_1}) (\sqrt{r_1} + \sqrt{r_2}) \, G_{p, \hnabla u}(r_0, r_1, r_2) \, 
(h^{\nu\mu} - g^{\mu\nu}) E_{r_0} (\dmu k) E_{r_1} (\dnu k) E_{r_2}
\\
& + (\sqrt{r_0} + \sqrt{r_1}) (\sqrt{r_1} + \sqrt{r_2}) \, G_{\hnabla u, p}(r_0, r_1, r_2) \, 
(h^{\mu\nu} - g^{\mu\nu}) E_{r_0} (\dmu k) E_{r_1} (\dnu k) E_{r_2}
\\
& - (\sqrt{r_0} + \sqrt{r_1}) (\sqrt{r_1} + \sqrt{r_2}) \, G_{p, p}(r_0, r_1, r_2) \, 
g^{\mu\nu} E_{r_0} (\dmu k) E_{r_1} (\dnu k) E_{r_2}
\big]
\end{aligned}
\\
& \cv \tfrac{1}{4\,\pi} \,[
F_{(2)\Delta k}(r_0, r_1) \, \hlterm{E_{r_0} (\Delta k) E_{r_1}}
+ F_{(2)\partial k \partial k}^{\mu\nu}(r_0, r_1, r_2) \, \hlterm{E_{r_0} (\dmu k) E_{r_1} (\dnu k) E_{r_2}}
\,],
\end{align*}
with
\begin{align*}
F_{(2)\Delta k}(r_0, r_1)
&= 
- (\sqrt{r_0} + \sqrt{r_1}) \, G_{\hnabla \hnabla u}(r_0, r_1),
\\[1mm]
F_{(2)\partial k \partial k}^{\mu\nu}(r_0, r_1, r_2)
&= \begin{aligned}[t]
& 2 g^{\mu\nu} G_{\hnabla \hnabla u}(r_0, r_2)
\\
& + g^{\mu\nu} (\sqrt{r_0} + \sqrt{r_1})(\sqrt{r_1} + \sqrt{r_2}) \, G_{\hnabla u, \hnabla u}(r_0, r_1, r_2)
\\
& +  (h^{\nu\mu} - g^{\mu\nu}) (\sqrt{r_0} + \sqrt{r_1})(\sqrt{r_1} + \sqrt{r_2}) \, G_{p, \hnabla u}(r_0, r_1, r_2)
\\
& + (h^{\mu\nu} - g^{\mu\nu}) (\sqrt{r_0} + \sqrt{r_1})(\sqrt{r_1} + \sqrt{r_2}) \, G_{\hnabla u, p}(r_0, r_1, r_2)
\\
& - g^{\mu\nu} (\sqrt{r_0} + \sqrt{r_1})(\sqrt{r_1} + \sqrt{r_2}) \, G_{p, p}(r_0, r_1, r_2).
\end{aligned}
\end{align*}

Then $F_{\Delta k} \vc F_{1, \Delta k} + F_{2, \Delta k}$ and $F_{\partial k \partial k}^{\mu\nu} \vc F_{1, \partial k \partial k}^{\mu\nu} + F_{2, \partial k \partial k}^{\mu\nu}$ simplifies as in \eqref{eq-NCT2-FDk} and \eqref{eq-NCT2-Fdkdk}. The expression obtained for $\mathcal{R}_2$ shows that it belongs to $C(\TT^{2}_\Theta)$ and acts by left multiplication on $\H$.
\end{proof}

\subsection{The noncommutative four torus}
\label{4tore}

Our result applies to the computation of the conformally perturbed scalar curvature on the noncommutative four torus, computed in \cite{FathKhal13a, Fath15a}. In order to do that, as in dimension $2$, we perform the computation at rational value of $\theta$ as described at the end of Appendix \ref{sec-geometry-NCT}.

The operator we consider is the one in \cite{FathKhal13a}, written as
\begin{align*}
\Delta_\varphi 
&\vc
k^2 \bpartial_1 k^{-2} \partial_1 k^2 
+ k^2 \partial_1 k^{-2} \bpartial_1 k^2 
+ k^2 \bpartial_2 k^{-2} \partial_2 k^2 
+ k^2 \partial_2 k^{-2} \bpartial_2 k^2 
\end{align*}
with (in our notations) $k^2 \vc e^{h}$, $\partial_1 \vc - \delta_1 + i \delta_3$, $\bpartial_1 \vc -\delta_1 - i \delta_3$, $\partial_2 \vc -\delta_2 + i \delta_4$, and $\bpartial_2 \vc -\delta_2 - i \delta_4$. Indeed, in \cite{FathKhal13a, Fath15a}, the derivations are $\hdmu = -i \dmu$. This leads to
\begin{align*}
\Delta_\varphi 
&= - 2\left[ k^2 g^{\mu\nu} \dmu \dnu + g^{\mu\nu}  (\dnu k^2) \dmu + g^{\mu\nu} (\dmu \dnu k^2) - g^{\mu\nu}  (\dmu k^2) k^{-2} (\dnu k^2) \right]
\\
& \cv 2 P.
\end{align*}
The metric $g^{\mu\nu}$ is the diagonal one in \cite{FathKhal13a}, but in the following computation, we only require $g^{\mu\nu}$ to be constant.

Let us mention that in \cite{FathKhal13a, Fath15a}, the computation of the scalar curvature is done using $P$ defined above (and not  $\Delta_\varphi$), since the symbol in \cite[Lemma~3.6]{FathKhal13a} is the one of $P$. So we will use $P$ in the following. We get $P \cv - u g^{\mu\nu} \dmu \dnu - v^\mu \dmu - w$ with
\begin{align*}
u = k^2,
\qquad
v^\mu = g^{\mu\nu} (\dnu k^2),
\qquad
w = g^{\mu\nu} (\dmu \dnu k^2) - g^{\mu\nu}  (\dmu k^2) k^{-2} (\dnu k^2).
\end{align*}
Since $g$ is constant, we have as before $\alpha = R = \alpha^\mu = \beta^\mu = 0$ and $\hnmu = \nmu = \dmu$ and this implies $p^\mu = 0$ and $q = g^{\mu\nu} (\dmu \dnu k^2) - g^{\mu\nu}  (\dmu k^2) k^{-2} (\dnu k^2)$ in the covariant form \eqref{eq-def-P-upq}. We then use the result of Corollary~\ref{cor-d=4} to get the conformally perturbed scalar curvature:
\begin{align}
\label{eq-curv-NCT-4}
\mathcal{R}_2 
= \tfrac{1}{2^5 \pi^2} [
g^{\mu\nu} k^{-2} (\dmu \dnu k^2) k^{-2}  
- \tfrac{3}{2} g^{\mu\nu} k^{-2} (\dmu k^2) k^{-2} (\dnu k^2) k^{-2}
].
\end{align}

In Appendix~\ref{sec-comparison-NCT} it is shown that we recover the result previouly obtained in \cite{FathKhal13a, Fath15a} for the irrational noncommutative four torus.

\section{Conclusion}

In this paper, we have computed in all dimensions the local section $\mathcal{R}_2$ of $\End(V)$ defined by $a_2(a, P) = \int_M  \tr[a(x) \mathcal{R}_2(x)] \, \dvolg(x)$ for any section $a$ of $\End(V)$ for any nonminimal Laplace type operator $P = - [g^{\mu\nu} u(x)\pmu\pnu + v^\nu(x)\pnu + w(x)]$ (Theorems~\ref{thm-R-uvw} and \ref{thm-R-upq}). Expressions have been given for $\mathcal{R}_2$ in small dimensions, $d=2,3,4$ (Corollaries~\ref{cor-R-d=2},  \ref{cor-d=4}, and \ref{cor-R-d=3}) and for any even dimension $d \geq 2$ (Corollary~\ref{cor-R-d>2-even}), where, as expected from the results in \cite{IochMass17a}, polynomials expressions can be proposed. 

Despite the difficulties, $a_4(a,P)$ has been exhibited for $d=2$ in \cite{ConnesFath16} for the 2-dimensional noncommutative torus, leaving open the computation of $\mathcal{R}_4$. We hope that our method could be used to reach $\mathcal{R}_4$ in any dimension, using a computer algebra system in the more general framework of an arbitrary $P$, like~\eqref{eq-def-P-uvw}.

Our method still applies to more general setting than the NCT at rational values of the deformation parameter, namely to $n$-homogeneous $C^*$-algebras, which can be characterized in terms of sections of fiber bundles with fiber space $M_n(\C)$ \cite{Fell61a, Blac06a}.

\section*{Acknowledgements}

The authors are indebted to Valentin Zagrebnov for helpful discussions concerning some aspects of this paper.

\appendix
\renewcommand*{\thesection}{\Alph{section}}

\section{Geometrical identification of the noncommutative torus at rational values}
\label{sec-geometry-NCT}

For $d=2$ and $\theta \vc p/q$ rational, with $p,q$ relatively prime integers with $q>0$, it is known (see \cite[Prop.~12.2]{GracVariFigu01a}, \cite[Sect.~3]{DuboKrieMaedMich02a}) that the algebra $C(\TT^{2}_\Theta)$ of the NCT identifies with the algebra $\Gamma(A_{\theta})$ of continuous sections of a fiber bundle $A_{\theta}$ in $M_q(\C)$ algebras over a $2$-torus $\TT^{2}_B$. Let us describe this identification.

Denote by $\TT^{2}_P$ the $2$-torus given by identification of opposite sides of the square $[0, 2\pi]^2$. An element in $\TT^{2}_P$ is written as $(e^{i x}, e^{i y})$ for $(x,y) \in [0, 2\pi]^2$. There is a natural action of the (abelian discrete) group $G \vc \Z_q^2$ on $\TT^{2}_P$: $(m,n) \cdot (e^{i x}, e^{i y}) \vc (e^{i (x + 2 \pi p m/q)}, e^{i (y + 2 \pi p n/q)})$. 
\\The quotient $\TT^{2}_B \vc \TT^{2}_P/G$ is the $2$-torus constructed by identification of opposite sides of the square $[0, 2\pi/q]^2$. Indeed, there are unique $m \in \Z_q$ and $n \in \Z_q$ such that $e^{i (x + 2 \pi p m/q)} = e^{i (x + 2 \pi /q)}$ and $e^{i (y + 2 \pi p n/q)} = e^{i (y + 2 \pi /q)}$, so that $(m,0)$ (resp. $(0,n)$) identifies $(e^{i x}, e^{i y})$ with $(e^{i (x + 2 \pi /q)}, e^{i y})$ (resp. $(e^{i x}, e^{i y})$ with $(e^{i x}, e^{i (y + 2 \pi /q)})$) in $\TT^{2}_P/G$. The quotient map $\TT^{2}_P \to \TT^{2}_B$ is a $G$-covering. 

Let us now consider the $C^\ast$-algebra $C(\TT^{2}_P, M_q(\C)) \simeq C(\TT^{2}_P) \otimes M_q(\C)$ of matrix-valued continuous functions on $\TT^{2}_P$, in which the space of smooth functions $C^\infty(\TT^{2}_P, M_q(\C))$ is a dense subalgebra. In order to describe this algebra, let consider the two matrices
\begin{align*}
U_0 &\vc 
\begin{pmatrix}
0 & 1 & 0 & \cdots & 0 \\
0 & 0 & 1 & \cdots & 0 \\
\vdots & & & \ddots & \vdots \\
0 & 0 & 0 & \cdots & 1 \\
1 & 0 & 0 & \cdots & 0
\end{pmatrix},
&
V_0 & \vc
\begin{pmatrix}
1 & 0 & 0 & \cdots & 0 \\
0 & \xi_1 & 0 & \cdots & 0 \\
\vdots & & \ddots & & \vdots \\
0 & \cdots & 0 & \xi_{q-2} & 0 \\
0 & 0 & \cdots  & 0 & \xi_{q-1}
\end{pmatrix}
\quad \text{with } \xi_n\vc e^{i 2 \pi n\theta},
\end{align*}
which satisfy $U_0 V_0 = e^{i 2 \pi \theta} V_0 U_0$, $U_0^q = V_0^q = \bbbone_q$. For $(r,s) \in \Z_q^2$, the $U_0^r V_0^s$'s define a basis of $M_q(\C)$ such that $\tr[ U_0^r V_0^s ] = q \,\delta_{(r,s),(0,0)}$ (here $\tr$ is the trace on $M_q(\C)$). Then $a \in C^\infty(\TT^{2}_P, M_q(\C))$ can be decomposed as
\begin{equation}
\label{eq-dec-smooth-torus-matrix}
a(e^{i x}, e^{i y}) 
= \sum_{(r,s) \in \Z_q^2} a_{r,s}\,(e^{i x}, e^{i y}) \,U_0^r V_0^s
= \sum_{(k,\ell) \in \Z^2} \sum_{(r,s) \in \Z_q^2} a_{k,\ell, r, s} \,u(x)^{k} v(x)^\ell\, U_0^r V_0^s
\end{equation}
where, with $u(x) \vc e^{i x}$ and $v(x) \vc e^{i y}$, the last decomposition is the Fourier series of the smooth functions  $a_{r,s}$ on $\TT^{2}_P$. In particular, $a_{k,\ell, r, s}$ are rapidly decreasing coefficients in terms of $(k,\ell) \in \Z^2$.

The group $G$ acts on $M_q(\C)$ by
\begin{equation*}
(m,n)\cdot A \vc U_0^{-n} V_0^{m} A V_0^{-m} U_0^{n}.
\end{equation*}
Let us consider the subalgebra $C_G(\TT^{2}_P, M_q(\C)) \subset C(\TT^{2}_P, M_q(\C))$ of $G$-equivariant functions, which by definition satisfy, for any $(m,n) \in G$, 
\begin{equation*}
a(e^{i (x + 2 \pi p m/q)}, e^{i (y + 2 \pi p n/q)}) = U_0^{n} V_0^{-m} a(e^{i x}, e^{i y}) V_0^{m} U_0^{-n}.
\end{equation*}
Using \eqref{eq-dec-smooth-torus-matrix}, the $G$-equivariant elements in $C^\infty(\TT^{2}_P, M_q(\C))$ are such that their coefficients satisfy $a_{k,\ell, r, s} \,e^{i 2 \pi(m k + n \ell)} = a_{k,\ell, r, s}\, e^{i 2 \pi(m r + n s)}$ for any $(m,n) \in G$, $(k,\ell) \in \Z^2$ and $(r,s) \in \Z_q^2$, so that $a_{k,\ell, r, s} \neq 0$ only when $m k + n \ell \equiv m r + n s \mod q$. With $(m,n) = (1,0)$ and $(0,1)$ this implies $k \equiv r \mod q$ and $\ell \equiv s \mod q$. In \eqref{eq-dec-smooth-torus-matrix}, for a couple $(k,\ell) \in \Z^2$, there is a unique $(r,s) \in \Z_q^2$ for which $a_{k,\ell, r, s} \neq 0$ ($r$ and $s$ are the remainders of the Euclidean divisions of $k$  and $\ell$ by $q$). Then, the only non zero coefficients $a_{k,\ell, r, s}$ depends only on $(k,\ell) \in \Z^2$. We denote them by $a_{k,\ell}$, and a smooth $G$-equivariant function $a \in C^\infty_G(\TT^{2}_P, M_q(\C))$ is then given by the expansion
\begin{equation*}
a = \sum_{(k,\ell) \in \Z^2}  a_{k,\ell} \,(u U_0)^{k} (v V_0)^\ell = \sum_{(k,\ell) \in \Z^2}  a_{k,\ell}\, U^{k} V^\ell
\end{equation*}
with $U \vc u U_0$, $V \vc v V_0$ satisfying $U V = e^{i 2 \pi \theta} V U$. Then, the $C^\ast$-algebra $C(\TT^{2}_\Theta)$ for $\theta = p/q$ identifies with $C_G(\TT^{2}_P, M_q(\C))$, the $C^\ast$-completion of $C^\infty_G(\TT^{2}_P, M_q(\C))$ in $C(\TT^{2}_P, M_q(\C))$.

The space $C_G(\TT^{2}_P, M_q(\C))$ identifies in a canonical way with the space $\Gamma(A_\theta)$ of continuous sections of the associated fiber bundle $A_\theta \vc \TT^{2}_P \times_G M_q(\C)$ to the $G$-covering $\TT^{2}_P \to \TT^{2}_B$. By definition, $A_\theta$ is the quotient of $\TT^{2}_P \times M_q(\C)$ by the equivalence relation $((m,n)\cdot (e^{i x}, e^{i y}), A) \sim ((e^{i x}, e^{i y}), (m,n)\cdot A)$ for any $(m,n) \in G$. We denote by $[(e^{i x}, e^{i y}), A] \in A_\theta$ the class of $((e^{i x}, e^{i y}), A)$. To $a \in C_G(\TT^{2}_P, M_q(\C))$ corresponds the section $s \in \Gamma(A_\theta)$ defined by $s(x,y) \vc [(e^{i x}, e^{i y}), a(e^{i x}, e^{i y})]$. 

In the  GNS construction, $C^\infty_G(\TT^{2}_P, M_q(\C))$ is dense in $\H$ and is contained in the domains of the $\dmu$'s. The fiber of the vector bundle $V$ on which the differential operator $P$ acts is then $\C^{N} \simeq M_q(\C)$, \textit{i.e.} $N=q^2$. In the present situation, all used elements in $\Gamma(\End(V))$ are in fact left multiplications by elements in $C_G(\TT^{2}_P, M_q(\C)) \simeq \Gamma(A_\theta)$. For instance, the element $a$ in \eqref{eq-Rr-trace-choice} will be understood as the left multiplication by an element $a \in C_G(\TT^{2}_P, M_q(\C))$.

For $A \in M_q(\C)$, let $L(A)$ be the left multiplication by $A$ on $M_q(\C)$. Then $L(A)$ has the same spectrum as $A$, each eigenvalues having $q$ times its original multiplicity. In particular, we have that
\begin{equation*}
\tr[ L(A) ] = q \tr[ A ]
\end{equation*}
where in the LHS $\tr$ is the trace of operators on $M_q(\C)$ and in the RHS $\tr$ is the trace on $M_q(\C)$.

The computation of $\mathcal{R}_2$ uses local trivializations of sections of $A_\theta$. To $s \in \Gamma(A_\theta)$, we associate the local section $s_\text{loc} : (0, 2\pi/q)^2 \to M_q(\C)$ defined by $s_\text{loc}(x,y) \vc a(e^{i x}, e^{i y})$. Notice that the open subset $(0, 2\pi/q)^2 \subset \TT^{2}_B$ is sufficient to describe the continuous section $s$ via its trivialization $s_\text{loc}$. The (local) section $\mathcal{R}_2$ relative to $\varphi$ in \eqref{eq-Rr-trace-choice} is defined by
\begin{equation*}
\varphi(L(s_\text{loc})) = \int_{\TT^{2}_B} \tr[L(s_\text{loc}(x,y))] \, \dvolg(x,y)
= \abs{g}^{1/2} \int_0^{2\pi/q} \dd x \int_0^{2\pi/q} \dd y \, q \tr[ s_\text{loc}(x,y) ].
\end{equation*}
where we suppose here that $\abs{g}^{1/2} $ is constant (this is the case for the situations considered in the paper). For $s$ associated to $a \in C^\infty_G(\TT^{2}_P, M_q(\C))$, one has 
\begin{equation*}
s_\text{loc}(x,y) = \sum_{(k,\ell) \in \Z^2}  a_{k,\ell} \,(u(x) U_0)^{k}\, (v(x) V_0)^\ell, 
\end{equation*}
hence
\begin{equation*}
\varphi(L(s_\text{loc})) 
= \abs{g}^{1/2} \, q \sum_{(k,\ell) \in \Z^2}  a_{k,\ell} \int_0^{2\pi/q} \dd x \, e^{i k x} \int_0^{2\pi/q} \dd y  \, e^{i \ell y}  \,  \tr[ U_0^{k} V_0^\ell ].
\end{equation*}
The trace $\tr[ U_0^{k} V_0^\ell ]$ is non-zero only when $k,\ell$ are multiple of $q$, and its value is then $q$, so that
\begin{equation}
\label{eq-varphi-NCt-computation}
\varphi(L(s_\text{loc})) 
= \abs{g}^{1/2} \, q^2\! \!\!\!\sum_{(k,\ell) \in \Z^2}\!  a_{k,\ell} \int_0^{2\pi/q} \!\!\dd x \, e^{i q k x} \int_0^{2\pi/q}\! \!\dd y  \, e^{i q \ell y}
=  \abs{g}^{1/2} \, q^2 \big(\tfrac{2\pi}{q}\big)^2 a_{0,0}
= (2\pi)^2 \abs{g}^{1/2}  \, \NCtr(a).
\end{equation}
Finally we get (when $ \abs{g}^{1/2} $ is constant)
\begin{equation*}
\varphi \circ L = (2\pi)^2 \abs{g}^{1/2}  \, \NCtr
\end{equation*}
when applied to any elements in $C(\TT^2_\Theta)$.

\bigskip
Consider now a $4$-dimensional noncommutative torus for $\Theta = \left(\begin{smallmatrix} \theta_1 \chi & 0 \\ 0 & \theta_2 \chi \end{smallmatrix}\right)$, as in \eqref{def: matrice theta}, and $\theta_i = p_i/q_i$, $p_i,q_i$ relatively prime integers, and  $q_i>0$. \\
Then $C(\TT^4_\Theta) = C(\TT^2_{\Theta_1}) \otimes C(\TT^2_{\Theta_2}) = \Gamma(A_{\theta_1}) \otimes \Gamma(A_{\theta_2}) = \Gamma(A_{\theta_1} \boxtimes A_{\theta_2})$ where $A_{\theta_1} \boxtimes A_{\theta_2}$ is the external tensor product of the two vector bundles $A_{\theta_i}$ over the base $2$-torus $\TT^{2}_{B, i}$ defined as above. Recall that, with $\text{pr}_i : \TT^{2}_{B, 1} \times \TT^{2}_{B, 2} \to \TT^{2}_{B, i}$ the natural projections, $A_{\theta_1} \boxtimes A_{\theta_2} \vc \left( \text{pr}_1^\ast A_{\theta_1} \right) \otimes \left( \text{pr}_2^\ast A_{\theta_2} \right)$ where $\text{pr}_i^\ast A_{\theta_i}$ is the pull-back of $A_{\theta_i}$ on $\TT^{2}_{B, 1} \times \TT^{2}_{B, 2}$. The fiber of $A_{\theta_1} \boxtimes A_{\theta_2}$ is then $M_{q_1}(\C) \otimes M_{q_2}(\C) \simeq M_{q_1 q_2}(\C)$ and the isomorphism $\Gamma(A_{\theta_1}) \otimes \Gamma(A_{\theta_2}) \xrightarrow{\simeq} \Gamma(A_{\theta_1} \boxtimes A_{\theta_2})$ is induced by $(s_1 \otimes s_2)(x_1, x_2) \vc s_1(x_1) \otimes s_2(x_2)$ for any $s_i \in \Gamma(A_{\theta_i})$ and $x_i \in \TT^{2}_{B, i}$. Using the same line of arguments as for the $2$-dimensional case, and denoting by $g_i$ a constant metric on $\TT^{2}_{B, i}$, one gets, for any $a_i \in  C(\TT^2_{\Theta_i})$,
\begin{equation}
\label{eq-varphi-NCt-computation-d=4}
\varphi\circ L(a_1 \otimes a_2) =\abs{g_1}^{1/2}  \abs{g_2}^{1/2} \, (q_1 q_2)^2 \left( \tfrac{2 \pi}{q_1}\right)^2 \left( \tfrac{2 \pi}{q_2}\right)^2 \NCtr\,(a_1 \otimes a_2),
\end{equation}
so that
\begin{equation}
\label{eq-varphi-NCtr-d=4}
\varphi \circ L = (2\pi)^4 \abs{g_1}^{1/2}  \abs{g_2}^{1/2} \, \NCtr\,.
\end{equation}

This procedure can be extended straightforwardly to any even dimension.

\section{Comparison with previous results for noncommutative tori}
\label{sec-comparison-NCT}

We would like to compare the result \eqref{eq-NCT-result} with \cite[Theorem~5.2]{FathKhal11a}. Some transformations are in order, since some conventions are different and the results are presented using different operators. In \cite[Theorem~5.2]{FathKhal11a}, it is presented relative to the normalized trace $\NCtr$ on $C(\TT^2_\Theta)$, while our result is presented relative to $\varphi \circ L = (2\pi)^2 \ \abs{g}^{1/2} \, \NCtr$ with here $ \abs{g}^{1/2} = \tau_2^{-1}$. If $\mathcal{R}_{FK}$ denotes the operator of \cite[Theorem~5.2]{FathKhal11a}, then
\begin{align*}
a_2(a, P)
 &= \NCtr\,( a \mathcal{R}_{FK}) = \varphi(a \mathcal{R}_2)
= (2\pi)^2 \tau_2^{-1} \, \NCtr\,(a \mathcal{R}_2),\quad \forall a \in C(\TT^2_\Theta),
\end{align*}
so we need to show that $\mathcal{R}_2 = \tfrac{\tau_2}{(2 \pi)^2} \mathcal{R}_{FK}$ (strictly speaking, $a \mathcal{R}_2$ should be replaced by $L(s \mathcal{R}_2)$ in $\varphi(a \mathcal{R}_2)$, see Section~\ref{sec-geometry-NCT}).

Present results are given using functional calculus on the left and right multiplication operators $L_u$ and $R_u$ where $u=k^2$. The corresponding spectral decompositions give $L_u(a) = \sum_{r_0} r_0 \, E_{r_0} a$ and $R_u(a) = \sum_{r_1} r_1 \, a E_{r_1}$, where $E_{r_i}$ is the projection associated to $u$ for the spectral value $r_i$. In \cite{FathKhal11a}, another convention is used, namely via functional calculus on the modular operator $\Modular(a) \vc k^{-2} a k^2$. If $E_y^\Modular$ denotes the projection of $\Modular$ associated to the spectral value $y$, then
\begin{align*}
\Modular(a) &= L_{u^{-1}} \circ R_u (a)
= \sum_{r_0, r_1} r_0^{-1} r_1 \, E_{r_0} a E_{r_1}
= \sum_{r_0, y} y \, E_{r_0} a E_{y r_0}
= \sum_{y} y \, E_y^\Modular(a)
\end{align*}
where $y \vc r_0^{-1} r_1$ belongs to the spectrum of $\Modular$.\\
Thus $E_y^\Modular(a) = \sum_{r_0} E_{r_0} a E_{y r_0}$ and is easy to check that $E_{y_0}^\Modular E_{y_1}^\Modular = \delta_{y_0, y_1} E_{y_0}^\Modular$.

The next three lemmas are technical results which permit to transform our relation \eqref{eq-NCT-result} into a relation that will be compared to \cite[Theorem~5.2]{FathKhal11a}. 

For any $b_0 \otimes \cdots \otimes b_p \in M_N(\C)^{\otimes^{p+1}}$, we denote by $\Modular_i$ the operator $\Modular$ which acts on $b_i$, by $L^i_k$ the left multiplication by $k$ acting on $b_i$, and by $R^i_u$ the right multiplication by $u$ on $b_i$. Notice that all these operators commute.\\
The first lemma transforms the functional calculus in the $R^i_u$'s into a functional calculus in the $\Modular_i$'s.

\begin{lemma}[Rearrangement Lemma]
\label{lem-rearrangement-lemma}
For any function $F(r_0, r_1, \dots, r_p)$ of the eigenvalues $r_i$ of the $R^i_u$'s and any $b_0 \otimes \cdots \otimes b_p$ one has
\begin{align*}
\sum_{r_0, r_1, \dots, r_p} F(r_0, r_1, \dots, r_p) \, b_0 E_{r_0} b_1 E_{r_1} \cdots b_p E_{r_p}
=
\sum_{r_0, y_1, \dots, y_p} f(r_0, y_1, \dots, y_p) \, b_0 E_{r_0} E_{y_1}^\Modular(b_1) \cdots E_{y_p}^\Modular(b_p)
\end{align*}
where $f(r_0, y_1, \dots, y_p) \vc F(r_0, r_0 y_1, r_0 y_1 y_2, \dots, r_0 y_1 \cdots y_p)$ is a spectral function of $R^0_u$ and the $\Modular_i$'s.
\end{lemma}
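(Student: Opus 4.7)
The plan is to prove this by first establishing a pointwise identity and then performing a simple change of summation variables. The key computational step is the identity
\begin{equation*}
E_{r_0}\, E_{y_1}^\Modular(b_1)\, E_{y_2}^\Modular(b_2) \cdots E_{y_p}^\Modular(b_p)
= E_{r_0}\, b_1 E_{r_0 y_1}\, b_2 E_{r_0 y_1 y_2} \cdots b_p E_{r_0 y_1 \cdots y_p},
\end{equation*}
valid for any $r_0 \in \spec(u)$ and any $y_1,\dots,y_p \in \spec(\Modular)$. To prove it, I would expand the left-hand side using $E_{y_i}^\Modular(b_i) = \sum_{s_i \in \spec(u)} E_{s_i} b_i E_{y_i s_i}$, obtaining a multiple sum over $s_1,\dots,s_p$. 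The leftmost factor $E_{r_0} E_{s_1}$ forces $s_1 = r_0$; the resulting factor $E_{y_1 r_0} E_{s_2}$ forces $s_2 = y_1 r_0$; by iteration $s_i = r_0 y_1 \cdots y_{i-1}$, which collapses the multiple sum to a single term of the claimed form. (Note that factors $E_{r_0 y_1 \cdots y_i}$ with $r_0 y_1 \cdots y_i \notin \spec(u)$ are zero by convention, so no spurious terms appear.)

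Given this identity, I would then substitute it into the right-hand side of the Rearrangement Lemma, replacing $f(r_0, y_1, \dots, y_p)$ by its definition $F(r_0, r_0 y_1, \dots, r_0 y_1 \cdots y_p)$, to obtain
\begin{equation*}
\sum_{r_0 \in \spec(u)} \sum_{y_1,\dots,y_p \in \spec(\Modular)}
F(r_0, r_0 y_1, \dots, r_0 y_1 \cdots y_p)\, b_0 E_{r_0} b_1 E_{r_0 y_1} \cdots b_p E_{r_0 y_1 \cdots y_p}.
\end{equation*}

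The final step is the change of variables $r_i \vcentcolon= r_0 y_1 \cdots y_i$ (equivalently $y_i = r_i/r_{i-1}$). Since $\spec(\Modular) = \{r/s : r,s \in \spec(u)\}$, for any $(r_1,\dots,r_p) \in \spec(u)^p$ the ratios $y_i = r_i/r_{i-1}$ lie in $\spec(\Modular)$; conversely, on the right-hand side the terms with $r_0 y_1 \cdots y_i \notin \spec(u)$ vanish because the corresponding spectral projection is zero. Therefore the map $(r_1,\dots,r_p) \leftrightarrow (y_1,\dots,y_p)$ sets up a bijection between the nonvanishing contributions to the two sums, and term-by-term the summands coincide, yielding the left-hand side of the lemma.

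The only place that requires care is the bookkeeping of spectra in the reindexing: one must observe that the convention $E_r = 0$ for $r \notin \spec(u)$ makes the sums over $\spec(u)^{p+1}$ and $\spec(u) \times \spec(\Modular)^p$ agree without additional constraints. This is a finite-dimensional statement at each $x \in M$, so no convergence issues arise, and no use is made of the specific form of $F$ beyond functional calculus in the commuting family $R^0_u, R^1_u, \dots, R^p_u$ (resp.\ $R^0_u, \Modular_1,\dots,\Modular_p$).
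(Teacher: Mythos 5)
Your proposal is correct and follows essentially the same route as the paper's proof: expand each $E_{y_i}^\Modular(b_i)$ as $\sum_{s_i} E_{s_i} b_i E_{y_i s_i}$, use orthogonality of the projections $E_r$ to collapse the auxiliary sums from left to right, and then reindex via $r_i = r_0 y_1\cdots y_i$. The only cosmetic difference is that you isolate the collapsing step as a pointwise identity and treat general $p$, whereas the paper interleaves the collapsing and reindexing and writes out only the case $p=2$.
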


Using functional calculus notation, this lemma implies
\begin{align*}
\mul \circ F(R^0_u, R^1_u, \dots, R^p_u) = \mul \circ f(R^0_u,  \Modular_1, \dots, \Modular_p)
\end{align*}
as operators acting on elements $b_0 \otimes \cdots \otimes b_p$. This result is very analog to the rearrangement lemma \cite[Corollary~3.9]{Lesc14a} without the integral $\int_0^\infty du$ in \cite[eq.~(3.9)]{Lesc14a}.

\begin{proof}
It is sufficient to show how the combinatorial aspect of the proof works for $p=2$. One has
\begin{align*}
\sum_{r_0, y_1, y_2} f(r_0, y_1, y_2) \, b_0 E_{r_0} E_{y_1}^\Modular(b_1) E_{y_2}^\Modular(b_2)
&=
\sum_{r_0, y_1, y_2} F(r_0, r_0 y_1, r_0 y_1 y_2) \, b_0 E_{r_0} E_{y_1}^\Modular(b_1) E_{y_2}^\Modular(b_2)
\\
&= 
\sum_{\substack{r_0, y_1, y_2\\ z_1, z_2}} F(r_0, r_0 y_1, r_0 y_1 y_2) \, b_0 E_{r_0} E_{z_1} b_1 E_{y_1 z_1} E_{z_2} b_2 E_{y_2 z_2}
\\
&= 
\sum_{\substack{r_0, y_1, y_2\\ z_2}} F(r_0, r_0 y_1, r_0 y_1 y_2) \, b_0 E_{r_0} b_1 E_{y_1 r_0} E_{z_2} b_2 E_{y_2 z_2}
\\
&= 
\sum_{\substack{r_0, r_1, y_2\\ z_2}} F(r_0, r_1, r_1 y_2) \, b_0 E_{r_0} b_1 E_{r_1} E_{z_2} b_2 E_{y_2 z_2}
\\
&= 
\sum_{\substack{r_0, r_1, y_2}} F(r_0, r_1, r_1 y_2) \, b_0 E_{r_0} b_1 E_{r_1} b_2 E_{y_2 r_1}
\\
&= 
\sum_{\substack{r_0, r_1, r_2}} F(r_0, r_1, r_2) \, b_0 E_{r_0} b_1 E_{r_1} b_2 E_{r_2}.
\end{align*}
\end{proof}

Let $k = e^{h/2}$. While the arguments $b_i$ mentioned above are $\dmu k$ or $\Delta k$, they are $\dmu (\ln k) = \tfrac{1}{2}\dmu h$ and $\Delta (\ln k) = \tfrac{1}{2}\Delta h$ in \cite{FathKhal11a}. The second lemma gives the relations between these arguments.

\begin{lemma}
\label{lem-k-to-logk}
If $
g_1(y) = \frac{\sqrt{y} - 1}{\ln y}
\text{ and }
g_2(y_1, y_2) =2 \frac{ \sqrt{y_1}(\sqrt{y_2} - 1) \ln y_1 - (\sqrt{y_1} - 1) \ln y_2}{ \ln y_1 \ln y_2 \,(\ln y_1 + \ln y_2)},
$
then
\begin{align*}
\dmu k &= 
k \, g_1(\Modular)[ \dmu h ] = 2 k \, g_1(\Modular)[ \dmu \ln k ],
\\
\Delta k &=
k \, g_1(\Modular)[ \Delta h ] 
- g^{\mu\nu} k \, \mul \circ g_2(\Modular_1, \Modular_2) [ (\dmu h) \otimes (\dnu h) ]
\\
&=
2 k \, g_1(\Modular)[ \Delta \ln k ] 
- 4 g^{\mu\nu} k \, \mul \circ g_2(\Modular_1, \Modular_2) [ (\dmu \ln k) \otimes (\dnu \ln k)].
\end{align*}
\end{lemma}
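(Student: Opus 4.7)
The approach would combine the Duhamel--Dyson formula $\dmu e^{A} = \int_0^1 e^{uA}(\dmu A)e^{(1-u)A}\,du$ with a systematic rewriting of right-multiplications by powers of $k$ into powers of $\Modular = L_{k^{-2}}R_{k^{2}}$ acting on the remaining factors; the scalar integrals left over in the integration parameters then assemble into $g_1$ and $g_2$.

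For the first identity, Duhamel applied with $A = h/2$ gives
\begin{equation*}
\dmu k = \tfrac{1}{2}\int_0^1 k^s(\dmu h)k^{1-s}\,ds.
\end{equation*}
Factoring $k$ on the left and using the key rewriting $k^{-(1-s)}a\,k^{1-s} = \Modular^{(1-s)/2}[a]$, the change of variable $t=(1-s)/2$ yields $\dmu k = k\int_0^{1/2}\Modular^t[\dmu h]\,dt = k\,\tfrac{\Modular^{1/2}-1}{\ln\Modular}[\dmu h]=k\,g_1(\Modular)[\dmu h]$. The $\dmu\ln k$ version follows immediately from $h=2\ln k$.

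For the Laplacian identity, differentiate the previous integral once more and use Leibniz to obtain three terms:
\begin{equation*}
\dmu\dnu k = \tfrac{1}{2}\int_0^1 \bigl[\dmu(k^s)(\dnu h)k^{1-s} + k^s(\dmu\dnu h)k^{1-s} + k^s(\dnu h)\dmu(k^{1-s})\bigr]\,ds.
\end{equation*}
Contracted with $-g^{\mu\nu}$, the middle term produces $k\,g_1(\Modular)[\Delta h]$ by the argument just used. For the two outer terms, I would apply Duhamel once more, $\dmu(k^s) = \tfrac{s}{2}\int_0^1 k^{su}(\dmu h)k^{s(1-u)}\,du$ and analogously for $\dmu(k^{1-s})$, so each becomes a double integral of products $k^a b_1 k^b b_2 k^c$ with $a+b+c=1$, where $(b_1,b_2)$ is $(\dmu h,\dnu h)$ in the first term and $(\dnu h,\dmu h)$ in the second. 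The symmetry of $g^{\mu\nu}$ lets me relabel $\mu\leftrightarrow\nu$ in the second to bring both contributions onto the common argument $(\dmu h)\otimes(\dnu h)$, and the identity
\begin{equation*}
k^a b_1 k^b b_2 k^c = k\cdot\mul\circ\bigl(\Modular_1^{(b+c)/2}\otimes\Modular_2^{c/2}\bigr)[b_1\otimes b_2] \qquad\text{when } a+b+c=1,
\end{equation*}
(with subscripts indicating on which tensor slot the operator acts) reduces both contributions to scalar spectral functions of $(\Modular_1,\Modular_2)$.

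The main obstacle is the remaining scalar step: evaluating the two double integrals in $s,u$ by elementary antiderivatives of exponentials and simplifying algebraically, one should find that each equals $2\,g_2(y_1,y_2)$ with $y_i$ the eigenvalues of $\Modular_i$; summed and multiplied by the overall prefactor $\tfrac{1}{4}$ arising from the two Duhamel factors $\tfrac{1}{2}$, they yield precisely $-g^{\mu\nu}k\,\mul\circ g_2(\Modular_1,\Modular_2)[(\dmu h)\otimes(\dnu h)]$. Combining with the middle-term contribution gives the first $\Delta k$ formula, and $h=2\ln k$ then produces the overall factors $2$ and $4$ in the $\ln k$ form of the identity.
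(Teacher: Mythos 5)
Your proposal is correct and follows essentially the same route as the paper: a Duhamel expansion of $\dmu e^{h/2}$, a second differentiation producing simplex double integrals, conversion of the conjugations $k^{-t}(\cdot)\,k^{t}$ into powers of $\Modular$ acting on each tensor slot, and elementary evaluation of the resulting scalar integrals to yield $g_1$ and $g_2$. The only immaterial differences are your parametrization of the integrals and your claim that the two double integrals are individually equal to $2\,g_2(y_1,y_2)$ --- which is in fact true and consistent with the paper, where $g_2$ is written as $\tfrac14$ times the sum of two (actually equal) simplex integrals.
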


\begin{proof}
With $g_1(y) \vc \tfrac{1}{2} \int_0^1 d s_1 \,y^{s_1/2} =( \sqrt{y} - 1)\ln^{-1} y$, we get
\begin{align*}
\dmu k &= 
\dmu e^{h/2}
= \int_0^1 d s_1\, e^{(1-s_1)h/2} \,(\dmu h/2)\, e^{s_1 h/2}
= \tfrac{1}{2} k ( \int_0^1 d s_1 \,\Modular^{s_1/2} ) [ \dmu h ]
\\
&= k \frac{\Modular^{1/2} - 1}{\ln \Modular} [ \dmu h ]
= 2 k \frac{\Modular^{1/2} - 1}{\ln \Modular} [ \dmu \ln k ]
= k \, g_1(\Modular)[ \dmu h ] = 2 k \, g_1(\Modular)[ \dmu \ln k ].
\end{align*}
Similarly for the Laplacian,
\begin{align*}
\Delta k 
&= - g^{\mu\nu} (\dmu \dnu k)
= - \tfrac{1}{2} g^{\mu\nu} \dmu [ \int_0^1 d s_1 e^{(1-s_1)h/2} (\dnu h) e^{s_1 h/2} ]
\\
&= \begin{aligned}[t]
&- \tfrac{1}{2} g^{\mu\nu} \,[ \int_0^1 d s_1\, e^{(1-s_1)h/2} (\dmu \dnu h) e^{s_1 h/2} \,]
\\
&- \tfrac{1}{4} g^{\mu\nu} \,[ \int_0^1 d s_1 \int_0^{1-s_1} \,d s_2  \,e^{(1 -s_1 - s_2)h/2} (\dmu h) e^{s_2 h/2} (\dnu h) e^{s_1 h/2} \,]
\\
&- \tfrac{1}{4} g^{\mu\nu} \,[ \int_0^1 d s_1 \int_0^{s_1} d s_2 \, e^{(1 -s_1)h/2} (\dnu h) e^{(s_1 - s_2)h/2} \dmu h) e^{s_2 h/2} \,]
\end{aligned}
\\
&= \begin{aligned}[t]
&- \tfrac{1}{2} g^{\mu\nu} k [ \int_0^1 d s_1 \, \Modular^{s_1/2} ] (\dmu \dnu h) 
\\
&- \tfrac{1}{4} g^{\mu\nu} k \, \mul \circ [ \int_0^1 d s_1 \int_0^{1-s_1} d s_2 \,  \Modular_1^{(s_1 + s_2)/2} \Modular_2^{s_1/2} ] \,[ (\dmu h) \otimes (\dnu h) ]
\\
&- \tfrac{1}{4} g^{\mu\nu} k \, \mul \circ \,[ \int_0^1 d s_1 \int_0^{s_1} d s_2 \,  \Modular_1^{s_1/2} \Modular_2^{s_2/2} \,] \,[ (\dmu h) \otimes (\dnu h) ]
\end{aligned}
\\
&= 
k \, g_1(\Modular)[ \Delta h ] 
- g^{\mu\nu} k \, \mul \circ g_2(\Modular_1, \Modular_2) [ (\dmu h) \otimes (\dnu h) ]
\end{align*}
with
\begin{align*}
g_2(y_1, y_2) &\vc 
\tfrac{1}{4} \int_0^1 d s_1 \int_0^{1-s_1} d s_2 \,  y_1^{(s_1 + s_2)/2} \,y_2^{s_1/2}
+ \tfrac{1}{4} \int_0^1 d s_1 \int_0^{s_1} d s_2 \,  y_1^{s_1/2} \,y_2^{s_2/2} 
\\
&=
2 \frac{ \sqrt{y_1}(\sqrt{y_2} - 1) \ln y_1 - (\sqrt{y_1} - 1) \ln y_2}{ \ln y_1 \ln y_2 (\ln y_1 + \ln y_2)}\,.
\end{align*}
\end{proof}

The third lemma gives (technical) functional relations which allow a change of arguments inside our operators. Denote by $\mul_{12}[b_0 \otimes b_1 \otimes b_2] \vc b_0 \otimes b_1 b_2$ the partial multiplication. 

\begin{lemma}
\label{lem-composition-change-variables}
For any operators like $f_1(R^0_u, \Modular_1)$, $f_2(R^0_u, \Modular_1, \Modular_2)$, $g_1(\Modular_1)$, and $g_2(\Modular_1, \Modular_2)$, one has
\begin{align*}
\mul \circ f_1(R^0_u, \Modular_1) \circ L^1_k \circ g_1(\Modular_1)
&=
\mul \circ R^0_k \circ f_1(R^0_u, \Modular_1) \circ g_1(\Modular_1),
\\
\mul \circ f_1(R^0_u, \Modular_1) \circ \mul_{12} \circ L^1_k \circ g_2(\Modular_1, \Modular_2) 
&=
\mul \circ R^0_k \circ f_1(R^0_u, \Modular_1 \Modular_2) \circ g_2(\Modular_1, \Modular_2) ,
\\
\mul \circ f_2(R^0_u, \Modular_1, \Modular_2) \circ L^1_k \circ L^2_k \circ g_2(\Modular_1, \Modular_2)
&=
\mul \circ R^0_u \circ \Modular_1^{1/2} \circ f_2(R^0_u, \Modular_1, \Modular_2) \circ g_2(\Modular_1, \Modular_2) .
\end{align*}
Thus the operators on the LHS are respectively associated, modulo the multiplication operator $\mul$, to operators defined by the spectral functions 
\begin{align*}
\sqrt{r_0} f_1(r_0, y_1) g_1(y_1),
\qquad
\sqrt{r_0} f_1(r_0, y_1 y_2) g_2(y_1, y_2),
\qquad r_0 \sqrt{y_1} f_2(r_0, y_1, y_2) g_2(y_1, y_2),
\end{align*}
where $y_1, y_2$ belong to the spectrum of $\Modular$ and $r_0$ to the spectrum of $u$.
\end{lemma}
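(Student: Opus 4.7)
The plan is to evaluate both sides of each of the three identities on a simple tensor ($b_0\otimes b_1$ for (1), $b_0\otimes b_1\otimes b_2$ for (2) and (3)) after first spectrally decomposing each $b_i$ on the joint eigenspaces of the pairwise commuting family $R_u^0,\Modular_1,\Modular_2$. Since $R_u^0$ has spectrum in that of $u$ and $\Modular_i$ has spectrum in that of $\Modular$, functional calculus in these operators is entirely captured by replacing $b_0$ by $b_0 E_{r_0}$ and $b_i$ by $E_{y_i}^\Modular(b_i)$ and reading off the numerical eigenvalues. So the statement reduces to purely algebraic identities between $b_0 E_{r_0}$, the spectral components $E_{y_i}^\Modular(b_i)$, and left/right multiplications by $k$.

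I would isolate the following four ingredients at the outset. First, $\Modular=L_{u^{-1}}\!\circ R_u$, and $L_k$ commutes with both $L_u$ and $R_u$, hence with $\Modular$, with every $E_y^\Modular$, and with every $g_i(\Modular_\cdot)$; in particular $k\,E_{y_i}^\Modular(b_i)$ still lies in the $y_i$-eigenspace of $\Modular$. Second, $E_{r_0}$ is a function of $u=k^2$, so it commutes with $k$ and $E_{r_0}k=kE_{r_0}$, with $E_{r_0}k^2=r_0 E_{r_0}$. Third, the modular operator is a unital algebra automorphism, $\Modular(ab)=\Modular(a)\Modular(b)$, so the product of a $y_1$-eigenvector and a $y_2$-eigenvector of $\Modular$ lies in the $(y_1y_2)$-eigenspace. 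Fourth, the inner automorphism $a\mapsto k^{-1}ak$ squares to $\Modular$, defining $\Modular^{1/2}$; equivalently $ak=k\,\Modular^{1/2}(a)$, which on an element in the $y$-eigenspace of $\Modular$ reads $ak=\sqrt y\,ka$.

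With these ingredients, identity (1) is immediate: after $g_1(\Modular_1)$ and $L^1_k$, the second factor becomes $k\,E_{y_1}^\Modular\circ g_1(\Modular)(b_1)$; $f_1(R_u^0,\Modular_1)$ acts, producing $b_0 E_{r_0}$ on the first factor while passing through the $L_k$ on the second by ingredient~(i); then $\mul$ followed by the commutation $kE_{r_0}=E_{r_0}k$ from~(ii) precisely reassembles the RHS, where $R^0_k$ sits between $f_1$ and $\mul$. For identity (2), I would additionally use ingredient~(iii): after $g_2$, $L^1_k$ and $\mul_{12}$, the second slot contains $k\,E_{y_1}^\Modular(b_1) E_{y_2}^\Modular(b_2)$, a scalar $k$ times an element of the $(y_1y_2)$-eigenspace of $\Modular$; hence applying $f_1(R_u^0,\Modular_1)$ produces the value $f_1(r_0,y_1y_2)$, which is exactly what $f_1(R_u^0,\Modular_1\Modular_2)$ on the RHS returns; the $k$ is then moved to the left of $E_{r_0}$ as in (1) to recover $R^0_k$.

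The real work is in identity (3), and this is where I expect the main obstacle. After $g_2\circ L^1_k\circ L^2_k$ one has $b_0\otimes k E_{y_1}^\Modular(b_1)\otimes k E_{y_2}^\Modular(b_2)$, and by ingredient~(i) these tensor factors are still in the $y_1$- and $y_2$-eigenspaces of $\Modular$, so $f_2(R_u^0,\Modular_1,\Modular_2)$ evaluates to $f_2(r_0,y_1,y_2)$. Applying $\mul$ yields $b_0 E_{r_0}\,k E_{y_1}^\Modular(b_1)\,k E_{y_2}^\Modular(b_2)$; the critical manoeuvre is to move the second $k$ past $E_{y_1}^\Modular(b_1)$ using ingredient~(iv), producing the scalar $\sqrt{y_1}$ and leaving $k\cdot k=k^2=u$ adjacent to $E_{r_0}$, which by (ii) reduces to $r_0 E_{r_0}$. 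The resulting scalar factor $r_0\sqrt{y_1}$ is precisely the eigenvalue produced on the RHS by $R^0_u\circ\Modular_1^{1/2}$. Identifying this $\sqrt{y_1}$ with the action of $\Modular_1^{1/2}$ on $E_{y_1}^\Modular(b_1)$ is the only genuinely non-trivial combinatorial input; the rest is bookkeeping. Reading off the eigenvalues through $\mul$ in each case gives the three spectral functions $\sqrt{r_0}f_1(r_0,y_1)g_1(y_1)$, $\sqrt{r_0}f_1(r_0,y_1y_2)g_2(y_1,y_2)$, and $r_0\sqrt{y_1}f_2(r_0,y_1,y_2)g_2(y_1,y_2)$ stated in the lemma.
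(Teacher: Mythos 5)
Your argument is correct, and it proves the three identities by a mildly but genuinely different mechanism than the paper. Both proofs evaluate the operators on elementary tensors and reduce everything to spectral data, but the paper works entirely with the projections $E_r$ of $u$: it expands $E_y^\Modular(b)=\sum_z E_z\, b\, E_{yz}$ and $L_k=\sum_r \sqrt{r}\,E_r$, multiplies out, and lets the orthogonality $E_zE_r=\delta_{z,r}E_z$ impose the index constraints (e.g.\ $r_0=z=r_1=z_1$, $y_1z_1=yz$) that collapse the sums; in the third identity the factor $r_0\sqrt{y_1}$ emerges from the chain $\sqrt{r_1}\sqrt{r_2}$ with $r_1=r_0$ and $r_2=y_1r_0$. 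You instead never descend below the level of $\Modular$-eigenspaces: the commutation of $L_k$ with $\Modular$ and its spectral projections handles the first identity, the multiplicativity $\Modular(ab)=\Modular(a)\Modular(b)$ handles the second, and the intertwining relation $ak=\sqrt{y}\,ka$ on the $y$-eigenspace handles the third. This buys a cleaner, less index-heavy proof and makes visible \emph{why} the eigenvalue $\Modular_1\Modular_2$ appears in the second identity and $R^0_u\circ\Modular_1^{1/2}$ in the third. The one point you should spell out is your ingredient (iv): $L_{k^{-1}}\circ R_k$ squares to $\Modular$, but to conclude that it acts as $+\sqrt{y}$ (and not $-\sqrt{y}$) on the $y$-eigenspace you need that it is the \emph{positive} square root, which follows because $L_{k^{-1}}$ and $R_k$ are commuting positive operators on $M_N(\C)$ with its Hilbert--Schmidt inner product, so their product is positive and hence equals $\Modular^{1/2}$. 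With that remark added, the proof is complete.
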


\begin{proof}
For the first relation, we compute the LHS on $b_0 \otimes b_1$ using spectral decomposition:
\begin{align*}
\mul \circ f_1(R^0_u, \Modular_1) \circ L^1_k  \circ g_1(\Modular_1) [ b_0 \otimes b_1]
&= \sum_{\substack{r_0, r_1, \\ y, y_1}} f_1(r_0, y) \sqrt{r_1} g_1(y_1)  \, 
b_0 E_{r_0} E_y^\Modular [ E_{r_1} E_{y_1}^\Modular(b_1) ]
\\
&=
\sum_{\substack{r_0, r_1, \\ y, y_1, \\ z, z_1}} f_1(r_0, y) \sqrt{r_1} g_1(y_1)  \, 
b_0 E_{r_0} E_z E_{r_1} E_{z_1} b_1 E_{y_1 z_1} E_{y z}
\end{align*}
the projections products imply $r_0 = z = r_1 = z_1$ and $y_1 z_1 = y z$, so this is equal to
\begin{align*}
\sum_{\substack{r_0, y, \\ z}} f_1(r_0, y) \sqrt{r_0} g_1(y) \, 
b_0 E_{r_0} E_z b_1 E_{y z}
&=
\sum_{\substack{r_0, y, \\ z}} \sqrt{r_0} f_1(r_0, y) g_1(y) \, 
b_0 E_{r_0} E_y^\Modular(b_1)
\\
&=
\mul \circ R^0_k \circ f_1(R^0_u, \Modular_1) \circ g_1(\Modular_1) [ b_0 \otimes b_1 ].
\end{align*}
For the second relation, we compute the LHS on $b_0 \otimes b_1 \otimes b_2$:
\begin{align*}
\mul \circ f_1(R^0_u, \Modular_1) \circ \mul_{12}  & \circ L^1_k \circ g_2(\Modular_1, \Modular_2) [ b_0 \otimes b_1 \otimes b_2 ]
\\
&=
\sum_{\substack{r_0, r_1, \\ y, y_1, y_2}} f_1(r_0, y) \sqrt{r_1} g_2(y_1, y_2)  \, 
b_0 E_{r_0} E_y^\Modular [ E_{r_1} E_{y_1}^\Modular(b_1) E_{y_2}^\Modular(b_2) ]
\\
&=
\sum_{\substack{r_0, r_1, \\ y, y_1, y_2,\\ z, z_1, z_2}} f_1(r_0, y) \sqrt{r_1} g_2(y_1, y_2)  \, 
b_0 E_{r_0} E_z E_{r_1} E_{z_1} b_1 E_{y_1 z_1} E_{z_2} b_2 E_{y_2 z_2} E_{y z}
\\
\intertext{which implies $r_0 = z = r_1 = z_1$, $y_1 z_1 = z_2$, and $y_2 z_2 = y z$, and}
&=
\sum_{\substack{r_0, z_1, z_2, \\y_1, y_2}} f(r_0, y_1 y_2) \sqrt{r_0} g_2(y_1, y_2)  \, 
b_0 E_{r_0} E_{z_1} b_1 E_{y_1 z_1} E_{z_2} b_2 E_{y_2 z_2}
\\
&=
\sum_{r_0, y_1, y_2} \sqrt{r_0} f(r_0, y_1 y_2) g_2(y_1, y_2)  \, 
b_0 E_{r_0} E_{y_1}^\Modular(b_1) E_{y_2}^\Modular(b_2)
\\
&= 
\mul \circ R^0_k \circ f(R^0_u, \Modular_1 \Modular_2) \circ g_2(\Modular_1, \Modular_2) [ b_0 \otimes b_1 \otimes b_2 ].
\end{align*}
For the third relation, we compute the LHS on $b_0 \otimes b_1 \otimes b_2$:
\begin{align*}
\mul & \circ f(R^0_u, \Modular_1, \Modular_2) \circ L^1_k \circ L^2_k \circ g_2(\Modular_1, \Modular_2) [ b_0 \otimes b_1 \otimes b_2 ]
\\
&=
\sum_{\substack{r_0, r_1, r_2 ,\\ y_1, y_2, y'_1, y'_2}} f_2(r_0, y_1, y_2) \sqrt{r_1} \sqrt{r_2} g_2(y'_1, y'_2) \, 
b_0 E_{r_0} E_{y_1}^\Modular [ E_{r_1} E_{y'_1}^\Modular(b_1) ] E_{y_2}^\Modular [ E_{r_2} E_{y'_2}^\Modular(b_2) ]
\\
&=
\sum_{\substack{r_0, r_1, r_2 ,\\ y_1, y_2, y'_1, y'_2, \\ z_1, z_2, z'_1, z'_2}} f_2(r_0, y_1, y_2) \sqrt{r_1} \sqrt{r_2} g_2(y'_1, y'_2) \, 
b_0 E_{r_0} E_{z_1} E_{r_1} E_{z'_1} b_1 E_{y'_1 z'_1} E_{y_1 z_1} E_{z_2} E_{r_2} E_{z'_2} b_2 E_{y'_2 z'_2} E_{y_2 z_2}
\\
\intertext{which implies $r_0 = z_1 = r_1 = z'_1$, $y'_1 z'_1 = y_1 z_1 = z_2 = r_2 = z'_2$, and $y'_2 z'_2 = y_2 z_2$, so that:}
&=
\sum_{\substack{r_0,  z_1, z_2, \\ y_1, y_2}} f_2(r_0, y_1, y_2) \sqrt{r_0} \sqrt{y_1 r_0} g_2(y_1, y_2) \, 
b_0 E_{r_0} E_{z_1} b_1 E_{y_1 z_1} E_{z_2} b_2 E_{y_2 z_2}
\\
&=
\sum_{r_0,  y_1, y_2} r_0 \sqrt{y_1} f_2(r_0, y_1, y_2) g_2(y_1, y_2) \, 
b_0 E_{r_0} E_{y_1}^\Modular(b_1) E_{y_2}^\Modular(b_2)
\\
&=
\mul \circ R^0_u \circ \Modular_1^{1/2} \circ f_2(R^0_u, \Modular_1, \Modular_2) \circ g_2(\Modular_1, \Modular_2) [ b_0 \otimes b_1 \otimes b_2 ].
\end{align*}
\end{proof}

We can now change \eqref{eq-NCT-result} in order to compare with \cite[Theorem~5.2]{FathKhal11a}. As in Lemma~\ref{lem-rearrangement-lemma}, let 
\begin{align*}
f_{\Delta k}(r_0, y_1)
\vc
F_{\Delta k}(r_0, r_0 y_1),
\qquad
f_{\partial k \partial k}^{\mu\nu}(r_0, y_1, y_2)
\vc F_{\partial k \partial k}^{\mu\nu}(r_0, r_0 y_1, r_0 y_1 y_2).
\end{align*}
Using Lemmas~\ref{lem-k-to-logk} and \ref{lem-composition-change-variables}, one gets
\begin{align*}
\mul & \circ F_{\Delta k}(R^0_u, R^1_u) \, [ a \otimes \Delta k ]
=
\mul \circ f_{\Delta k}(R^0_u, \Modular_1) \, [ a \otimes \Delta k ]
\\
&= \begin{aligned}[t]
&\mul \circ f_{\Delta k}(R^0_u, \Modular_1) \circ L^1_k \circ g_1(\Modular_1) \, [ a \otimes \Delta h ]\\
&- g^{\mu\nu} \mul \circ f_{\Delta k}(R^0_u, \Modular_1) \circ L^1_k \circ \mul_{12} \circ g_2(\Modular_1, \Modular_2) \, [a \otimes (\dmu h) \otimes (\dnu h) ]
\end{aligned}
\\
&= \begin{aligned}[t]
&\mul \circ R^0_k \circ f_{\Delta k}(R^0_u, \Modular_1) \circ g_1(\Modular_1) \, [ a \otimes \Delta h ]\\
&- g^{\mu\nu} \mul \circ R^0_k \circ f_{\Delta k}(R^0_u, \Modular_1) \circ g_2(\Modular_1, \Modular_2) \, [a \otimes (\dmu h) \otimes (\dnu h) ]
\end{aligned}
\\
&= \begin{aligned}[t]
&2 \mul \circ R^0_k \circ f_{\Delta k}(R^0_u, \Modular_1) \circ g_1(\Modular_1) \, [ a \otimes \Delta \ln k ]\\
&- 4 g^{\mu\nu} \mul \circ R^0_k \circ f_{\Delta k}(R^0_u, \Modular_1 \Modular_2) \circ g_2(\Modular_1, \Modular_2) \, [a \otimes (\dmu \ln k) \otimes (\dnu \ln k) ]
\end{aligned}
\end{align*}
and
\begin{align*}
\mul & \circ F_{\partial k \partial k}^{\mu\nu}(R^0_u, R^1_u, R^2_u) \, [a \otimes (\dmu k) \otimes (\dnu k) ]
= 
\mul \circ f_{\partial k \partial k}^{\mu\nu}(R^0_u, \Modular_1, \Modular_2) \, [a \otimes (\dmu k) \otimes (\dnu k) ]
\\
&=
\mul \circ f_{\partial k \partial k}^{\mu\nu}(R^0_u, \Modular_1, \Modular_2) \circ L^1_k \circ L^2_k \circ g_1(\Modular_1) \circ g_1(\Modular_2) \, [a \otimes (\dmu h) \otimes (\dnu h) ]
\\
&= \mul \circ R^0_u \circ \Modular_1^{1/2} \circ f_{\partial k \partial k}^{\mu\nu}(R^0_u, \Modular_1, \Modular_2) \circ g_1(\Modular_1) \circ g_1(\Modular_2) \, [a \otimes (\dmu h) \otimes (\dnu h) ]
\\
&= 4 \mul \circ R^0_u \circ \Modular_1^{1/2} \circ f_{\partial k \partial k}^{\mu\nu}(R^0_u, \Modular_1, \Modular_2) \circ g_1(\Modular_1) \circ g_1(\Modular_2) \, [a \otimes (\dmu \ln k) \otimes (\dnu \ln k) ].
\end{align*}
So, the sum gives
\begin{align*}
&\mul \circ F_{\Delta k}(R^0_u, R^1_u) \, [ a \otimes \Delta k ]
 + \mul \circ F_{\partial k \partial k}^{\mu\nu}(R^0_u, R^1_u, R^2_u) \, [a \otimes (\dmu k) \otimes (\dnu k) ]
\\
&= \mul \circ G_{(\Delta \ln k)}(R^0_u, \Modular_1)\, [ a \otimes \Delta \ln k ]
+ \mul \circ G^{\mu\nu}_{(\partial \ln k) (\partial \ln k)}(R^0_u, \Modular_1, \Modular_2) \, [a \otimes (\dmu \ln k) \otimes (\dnu \ln k) ]
\end{align*}
with
\begin{align*}
G_{(\Delta \ln k)}(R^0_u, \Modular_1)
&\vc
2 R^0_k \circ f_{\Delta k}(R^0_u, \Modular_1) \circ g_1(\Modular_1),
\\
G^{\mu\nu}_{(\partial \ln k) (\partial \ln k)}(R^0_u, \Modular_1, \Modular_2)
&\vc
\begin{aligned}[t]
&4 R^0_u \circ \Modular_1^{1/2} \circ f_{\partial k \partial k}^{\mu\nu}(R^0_u, \Modular_1, \Modular_2) \circ g_1(\Modular_1) \circ g_1(\Modular_2) 
\\
&- 4 g^{\mu\nu} R^0_k \circ f_{\Delta k}(R^0_u, \Modular_1 \Modular_2) \circ g_2(\Modular_1, \Modular_2).
\end{aligned}
\end{align*}
The associated spectral functions are
\begin{align*}
G_{(\Delta \ln k)}(r_0, y_1)
&=
2 \sqrt{r_0} f_{\Delta k}(r_0, y_1) g_1(y_1),
\\
G^{\mu\nu}_{(\partial \ln k) (\partial \ln k)}(r_0, y_1, y_2)
&=
4 r_0 \sqrt{y_1} f_{\partial k \partial k}^{\mu\nu}(r_0, y_1, y_2) g_1(y_1) g_1(y_2) 
- 4 g^{\mu\nu} \sqrt{r_0} f_{\Delta k}(r_0, y_1 y_2) g_2(y_1, y_2).
\end{align*}

Another change of convention concerns the derivations of $C^\infty(\TT^2_\Theta)$: in \cite{FathKhal11a}, $\hdmu \vc - i \dmu$ is used. This implies that their expressions like $(\hdmu \ln k) (\hdnu \ln k)$ correspond to our $- (\dmu \ln k) \otimes (\dnu \ln k)$. Notice also their combination $\hdelta_1^2 \ln k + \abs{\tau}^2 \hdelta_2^2 \ln k + 2 \tau_1 \tau_2 \hdelta_1 \hdelta_2 \ln k = g^{\mu\nu} \hdmu \hdnu \ln k = - g^{\mu\nu} \dmu \dnu \ln k = \Delta \ln k$. Thus for a comparison of the two results, a $-$ sign has to be taken into account for the $G^{\mu\nu}_{(\partial \ln k) (\partial \ln k)}$ term. Finally, \cite[Theorem~5.2]{FathKhal11a} is written in terms of functions of $\ln \Modular$, thus it remains to make the final change of variables $y_1 = e^x$ in $G_{(\Delta \ln k)}$, and $y_1 = e^s$ and $y_2 = e^t$ in $G^{\mu\nu}_{(\partial \ln k) (\partial \ln k)}$. 

In \cite[Theorem~5.2]{FathKhal11a}, $\mathcal{R}_{FK} = -\tfrac{\pi}{\tau_2} \times [\text{expression in $R_1$, $R_2$, $W$}]$ while  in \eqref{eq-NCT-result}, one has written $\mathcal{R}_2 = \tfrac{1}{4\,\pi} \times \text{[expression in $F_{\Delta k}$, $F_{\partial k \partial k}^{\mu\nu}$]}$. The proof that $\mathcal{R}_2 = \tfrac{\tau_2}{(2 \pi)^2} \mathcal{R}_{FK}$ is then equivalent to check that $[\text{expression in $R_1$, $R_2$, $W$}] = - \text{[expression in $F_{\Delta k}$, $F_{\partial k \partial k}^{\mu\nu}$]}$. The previous technical results imply that this is equivalent to show that
\begin{align*}
R_1(x) 
&= 
- G_{(\Delta \ln k)}(r_0, e^x) ,
\\
R_2(s,t)
&=
G^{11}_{(\partial \ln k) (\partial \ln k)}(r_0, e^s, e^t),
\\
\abs{\tau}^2 R_2(s,t)
&= 
G^{22}_{(\partial \ln k) (\partial \ln k)}(r_0, e^s, e^t),
\\
\tau_1 R_2(s,t)  - i \tau_2 W(s,t) 
 &= 
 G^{12}_{(\partial \ln k) (\partial \ln k)}(r_0, e^s, e^t),
\\
\tau_1 R_2(s,t) + i \tau_2 W(s,t) 
 &= 
 G^{21}_{(\partial \ln k) (\partial \ln k)}(r_0, e^s, e^t).
\end{align*}
All these relations can be checked directly. In particular, the relations on the RHS are independent of the variable $r_0$.

\bigskip
In order to compare \eqref{eq-curv-NCT-4} for the noncommutative four torus with \cite[Theorem~5.4]{FathKhal13a}, we can use the results in \cite{Fath15a}. As before, we need the correspondence \eqref{eq-varphi-NCtr-d=4} between our trace $\varphi$ and their trace $\varphi_0 \equiv \NCtr$. Here $g_i^{\mu\nu} = \delta^{\mu\nu}$ on the base tori $\TT^{2}_{B, i}$, so that $\abs{g_i}^{1/2} = 1$. Denote by $\mathcal{R}_{FK}$ the curvature obtained in \cite[Theorem~5.4]{FathKhal13a}, which is $\pi^2$ times \cite[eq.~(5.1)]{FathKhal13a}. A comparison between eq.~(1) and (3) in \cite{Fath15a} and \cite[eq.~(5.1)]{FathKhal13a} gives
\begin{align*}
\varphi_0(a \mathcal{R}_{FK}) 
&= \tfrac{1}{2} \varphi_0 \left( a \int_{\gS^3} b_2(\xi) d \Omega \right)
= \tfrac{\pi^2}{2} \varphi_0 \left( a [
- \delta^{\mu\nu} k^{-2} (\hdmu \hdnu k^2) k^{-2}  
+ \tfrac{3}{2} \delta^{\mu\nu} k^{-2} (\hdmu k^2) k^{-2} (\hdnu k^2) k^{-2}
]\right)
\\
&= \tfrac{\pi^2}{2 (2\pi)^4} \varphi \left( a [
\delta^{\mu\nu} k^{-2} (\dmu \dnu k^2) k^{-2}  
- \tfrac{3}{2} \delta^{\mu\nu} k^{-2} (\dmu k^2) k^{-2} (\dnu k^2) k^{-2}
]\right)
= \varphi \left( a \mathcal{R}_2 \right),
\end{align*}
and the two results coincide.\footnote{The constant $c$ computed in \cite[eq.~(3)]{Fath15a} is not $c = 1/(2 \pi^2)$ as claimed but $c = 1/2$ as shown by a direct comparison between \cite[eq.~(3)]{Fath15a} and \cite[eq.~(5.1)]{FathKhal13a} where a factor $\pi^2$ is not written. This explains the factor $\tfrac{1}{2}$ in our first equality.}

\section*{References}

\bibliographystyle{elsarticle-num-names}
\bibliography{Heat-trace-2-biblio}

\end{document}